\documentclass[11pt]{amsart}
\usepackage{stmaryrd}
\usepackage{amsmath}
\usepackage{fullpage}
\usepackage{xspace}
\usepackage[psamsfonts]{amssymb}
\usepackage[latin1]{inputenc}
\usepackage{graphicx,color}
\usepackage{hyperref}
\usepackage{graphicx}
\usepackage{parskip}
\usepackage[
    backend=biber, 
    style=alphabetic, 
    maxnames=99,
]{biblatex}
\usepackage{amsmath}%
\usepackage{amsthm}%
\usepackage{amscd}
\usepackage{amsfonts}%
\usepackage{amssymb}%
\usepackage{graphicx}
\usepackage{stmaryrd}
\usepackage{mathrsfs}
\usepackage{tikz}
\usetikzlibrary{matrix,arrows}
\usepackage{tikz-cd}
\usepackage[all]{xy}
\usepackage{bbm} 
\usepackage{longtable}

\newtheorem{theorem}{Theorem}[section]

\newtheorem{corollary}[theorem]{Corollary}

\newtheorem{definition}{Definition}

\newtheorem{lemma}[theorem]{Lemma}

\newtheorem{proposition}[theorem]{Proposition}
\theoremstyle{remark}
\newtheorem{remark}[theorem]{Remark}

\newenvironment{customtheorem}[1]{%
  \renewcommand\thetheorem{#1}
  \theorem
}{\endtheorem}

\numberwithin{equation}{section}

\def\i{\boldsymbol{\mathrm{i}}}
\def\v{\boldsymbol{{v}}}
\def\e{\boldsymbol{{e}}}
\def\r{\boldsymbol{\mathrm{r}}}

\newcommand{\afrak}{\mathfrak{a}}

\newcommand{\ffrak}{\mathfrak{f}}

\newcommand{\mfrak}{\mathfrak{m}}
\newcommand{\nfrak}{\mathfrak{n}}

\newcommand{\pfrak}{\mathfrak{p}}

\newcommand{\Ocal}{\mathscr{O}}

\newcommand{\Tcal}{\mathscr{T}}

\newcommand{\C}{\mathbb{C}}

\newcommand{\F}{\mathbb{F}}

\newcommand{\I}{\mathbb{I}}

\newcommand{\K}{\mathbb{K}}

\newcommand{\N}{\mathbb{N}}

\newcommand{\R}{\mathbb{R}}

\newcommand{\T}{\mathbb{T}}
\newcommand{\U}{\mathbb{U}}

\newcommand{\Z}{\mathbb{Z}}

\newcommand{\Ocali}{\mathcal{O}}

\newcommand{\Tcali}{\mathcal{T}}

\newcommand{\GL}{\mathrm{GL}}
\newcommand{\PGL}{\mathrm{PGL}}

\newcommand\rquot[2]{
  \mathchoice
  {
    \text{\raise0.5ex\hbox{$#1$}\big/\lower0.5ex\hbox{$#2$}}%
  }
  {
    #1\,/\,#2
  }
  {
    #1\,/\,#2
  }
  {
    #1\,/\,#2
  }
}

\newcommand\lrquot[3]{
  \mathchoice
  {
    \text{\lower0.5ex\hbox{$#1$}\big\backslash\raise0.5ex\hbox{$#2$\!}\big/
      \lower0.5ex\hbox{\!\!$#3$}}%
  }
  {
    #1\,\backslash\,#2\,/\,#3
  }
  {
    #1\,\backslash\,#2\,/\,#3
  }
  {
    #1\,\backslash\,#2\,/\,#3
  }
}

\newcommand\lquot[2]{
  \mathchoice
  {
    \text{\lower0.5ex\hbox{$#1$}\big\backslash\raise0.5ex\hbox{$#2$}}%
  }
  {
    #1\,\backslash\,#2
  }
  {
    #1\,\backslash\,#2
  }
  {
    #1\,\backslash\,#2
  }
}

\usepackage{fontawesome}

  \DeclareFontFamily{U}{wncy}{}
    \DeclareFontShape{U}{wncy}{m}{n}{<->wncyr10}{}
    \DeclareSymbolFont{mcy}{U}{wncy}{m}{n}
    \DeclareMathSymbol{\Sha}{\mathord}{mcy}{"58}

\begin{document}
\title{Asymptotic distribution of CM points on the reduction of the Drinfeld modular curve}
\author{Matias Alvarado and Patricio P\'erez-Pi\~na}
\address{ Instituto de matem\'aticas, Universidad de Talca, Talca, Chile}
\email[M. Alvarado]{matias.alvarado@utalca.cl }
\address{ Department of Mathematical Sciences, University of Copenhagen}
\email[P. P\'erez-Pi\~na]{papp@math.ku.dk }%
\date{\today}

\begin{abstract}

We study a distribution problem over global function 
fields. More precisely, we describe the asymptotic 
distribution of rank $2$ CM Drinfeld modules among the 
irreducible components of the analytic reduction of the 
Drinfeld modular curve.
Our approach relies on the properties of the building map and the spectral decomposition of the adjacency operator on a 
quotient of the Bruhat-Tits tree.
\end{abstract}
\maketitle
\setcounter{tocdepth}{1}


\section{Introduction}


In this article, we present a distribution result for CM Drinfeld modules of rank 2. More concretely, we describe the asymptotic proportion of those reducing to a fixed irreducible component of the rigid-analytic reduction of the Drinfeld modular curve. This result resembles distribution problems related with the supersingular reduction of CM points. To better ilustrate this, let us start by reviewing these type of results in the more classical case of elliptic curves.


Recall that an elliptic curve $E$ over $\C$ has complex multiplication if its endomorphism ring is an order $\Ocal$ in an imaginary quadratic field $K$. The discriminant of $E$ is the negative integer $d$, defined as the discriminant of $\Ocal$. We can always write $d$ in the form $d=d_Kc^{2}$, where $d_K$ is the discriminant of $K$, and $c\geq1$ an integer. The integer $d_K$ is called the fundamental discriminant, and $c$ is called the conductor. When reducing at a fixed rational prime $\ell$, a CM elliptic curve $E$ has supersingular reduction if and only if $\ell$ does not split in $K$. In this situation, Galois orbits of CM elliptic curves become uniformly distributed in the supersingular locus when $|d|\to\infty$. This was first studied by Cornut and Vatsal (see \cite{Cornut02},\cite{Vatsal02}, and \cite{CornutVatsal}), whose methods apply to the case of fixed fundamental discriminant and varying conductor of the form $c=p^n$, where $p\neq\ell$ is a prime. The article \cite{Michel04} by Michel describes the case where $d=d_K$ varies, and his method guarantees uniform distribution of incomplete Galois orbits. In \cite{JetchevKane}, the authors extend this results to arbitrary variation in $d$. All the aforementioned results are subject to the condition that $\ell$ is inert in $K$. A statement covering all cases, i.e., when $\ell$ is inert or ramified, can be found in \cite[Theorem 5.7]{herreroMenaresRivera2}.

Replacing CM elliptic curves by CM Drinfeld modules of rank $2$ over rational function fields, Theorem 1.3 in \cite{papanikolasequidistribution} is the function field analogue of Michel's result. It proves the uniform distribution of incomplete Galois orbits of CM Drinfeld modules of rank 2 with respect to their reduction in the supersingular locus. The reduction is considered modulo an inert prime, and the discriminants allowed are irreducible of odd degree. 

Both in the case of CM elliptic curves over the complex numbers and CM Drinfeld modules of rank $2$ over rational function fields, their distribution on the supersingular locus can be interpreted as a distribution problem for Gross points among irreducible components of certain definite Shimura curves (see Section 6.1 in \cite{Michel04} and Section 1.3 in \cite{papanikolasequidistribution}, respectively). 
Thus, our main results (Theorem \ref{thmA}, Theorem \ref{thmB} and Theorem \ref{thmC} below), which describe the asymptotic distribution of CM Drinfeld modules of rank 2 among the irreducible components of the rigid-analytic reduction of the Drinfeld modular curve $Y$ (when viewed as a rigid analytic space over $\C_\infty$ via its uniformization by the Drinfeld upper half-plane), can be thought of as a rigid-analytic version of the supersingular reduction of CM points. We allow for arbitrary variation in the sequence of discriminants. We now proceed to explain our results. 

Let $q$ be a power of an odd prime number, and let $A$ be the polynomial ring $\F_q[T]$ with field of fractions $k=\F_q(T)$. The field $k_\infty=\F_q((T^{-1}))$ is the completion of $k$ at the infinite place $\infty=T^{-1}$. Let $\C_\infty$ be the completion of an algebraic closure of $k_\infty$, and denote by $|\cdot|$ the absolute value on $\C_\infty$, normalized by $|T|=q$. Consider the group $\Gamma=\PGL_2(A)$. The Drinfeld upper half-plane is the rigid analytic space \[\Omega=\C_\infty\smallsetminus k_\infty.\] Using the analytic uniformization of Drinfeld modules (see Section \ref{Drinfeldmodules}.), one obtains an identification \[\Gamma\backslash\Omega\cong Y(\C_\infty)\] that sends the $\Gamma$-class of $z\in \Omega$ to the isomorphism class of the Drinfeld $A$-module of rank $2$ corresponding to the quotient $\C_\infty/(A+zA)$. 

The space $Y(\C_\infty)$ comes equipped with a rigid-analytic reduction which we now describe following \cite{reversat} and \cite{gekelerreversat}. The space $\Omega$ admits a rigid-analytic reduction $\pi\colon\Omega\to\overline{\Omega}$, where $\overline{\Omega}$ is a scheme over $\overline{\F_q}$, locally of finite type, and each of its irreducible components is a $\mathbb{P}^1_{\overline{\F_q}}$ which meets exactly $(q+1)$ other irreducible components, each of them in one point which is ordinary and rational over $\F_q$. The group $\Gamma$ also acts on $\overline{\Omega}$ and $\pi$ respects this action. Therefore, we obtain a reduction map \[\mathrm{red}\colon Y(\C_\infty)\cong\Gamma\backslash\Omega\to\Gamma\backslash\overline{\Omega}.\]


Let $\Tcal$ denote the intersection graph of $\overline{\Omega}$. That is, the vertices of $\Tcal$ are the irreducible components of $\overline{\Omega}$ and two vertices are connected by an edge if the corresponding irreducible components meet. There exists a canonical identification between $\Tcal$ and $\Tcali$, the Bruhat--Tits tree of $\PGL_2(k_\infty)$. The intersection graph of the reduction $\Gamma\backslash\overline{\Omega}$ is thus canonically isomorphic to $\Gamma\backslash\Tcali$. This quotient has the shape of an infinite ray

\begin{equation}\label{line}\tikz[baseline=-1.5ex]{
  \def\s{0.7}      
  \def\d{0.18}     

  \foreach \i in {0,...,3} {
    \node[circle,fill,inner sep=1.2pt,label=below:$v_{\i}$] (n\i) at (\s*\i,0) {};
  }

  \foreach \i in {0,...,2} {
    \draw (n\i) -- (n\the\numexpr\i+1\relax);
  }

  \draw (n3) -- ++(\s,0);

  \foreach \j in {1,2,3} {
    \node[circle,fill,inner sep=0.6pt] at (\s*4 + \d*\j, 0) {};
  }
}\end{equation} 

For $n\geq0$, we denote by $C_n$ the irreducible component of $\Gamma\backslash\overline{\Omega}$ associated to the vertex $v_n$ following the notation of \eqref{line}. For example, $C_0$ is the unique irreducible component of $\Gamma\backslash\overline{\Omega}$ that intersects only one other irreducible component named $C_1$. For $n\geq0$, denote by $c_{n+1}$ the intersection $C_{n}\cap C_{n+1}$.

A Drinfeld $A$-module over $\C_\infty$ has complex multiplication if its endomorphism ring is an order in a quadratic extension of $k$ for which the place $\infty$ is not split. Such extensions are called \textit{imaginary quadratic extensions} of $k$. Let $D$ be a non-square in $A$ such that $K_D=k(\sqrt{D})$ is an imaginary quadratic extension of $k$. We denote by $\mathrm{CM}(D)\subseteq Y(\C_\infty)$ the set of all CM Drinfeld modules of rank $2$ with complex multiplication by $\mathcal{O}_D:=A[\sqrt{D}]$. Let $D_K$ be the square-free part of $D$ and write $D=D_Kf^2$ with $f\in A$ is monic. Then $\mathcal{O}_{K_D}=A[\sqrt{D_K}]$ is the maximal $A$-order of $K_D$ and we call $f$ the conductor of $D$. See \cite[Proposition 17.6]{rosen}.


Each element of $\mathrm{CM}(D)$ reduces to a unique irreducible component $C_n$ if and only if $\infty$ is an inert prime in $K_D$. This condition holds if and only if $D$ is of even degree and its leading term is not a square of $\mathbb{F}_q$ (see \cite[Proposition 14.6]{rosen}). We call such discriminants, \textit{inert discriminants}. When $D$ is of odd degree, the extension $K_D/k$ is ramified at $\infty$ and the reduction of elements in $\mathrm{CM}(D)$ lands on the intersections $\{c_{n+1}\}_{n\geq0}$.

In contrast with the reduction of CM points on the supersingular locus previously discussed, the asymptotic distribution of $\mathrm{CM}(D)$ among the irreducible components $(C_n)_{n\geq0}$ depends on the arithmetic relation between $\infty$ and $D$. In this regard, it is slightly similar to the situation of $p$-adic distribution of CM elliptic curves (see \cite{herreroMenaresRivera1} and \cite{herreroMenaresRivera2}). For this reason, we present our results in three theorems, two in the inert case and one covering the ramified case. The first one below deals with the case of a sequence of inert discriminants $D$ for which the sequence of square-free parts $D_K$ is eventually of positive degree. This is equivalent to say that $K_D/k$ is inert and eventually different from the constant field extension $\F_{q^2}(T)$.

\begin{customtheorem}{A}[]\label{thmA}
Consider a sequence of inert discriminants $D$ such that $\deg D_K>0$ if $|D|$ is sufficiently large. Then, for all $\varepsilon>0$ we have \[\left|\frac{\mathrm{CM}(D)\cap\mathrm{red}^{-1}(C_0)}{\#\mathrm{CM}(D)}- \frac{q-1}{2q}\right|\ll_\varepsilon |D|^{-1/4+\varepsilon}\quad\mbox{ as }|D|=q^{\deg D}\to\infty\] and for $n\geq1$, \[\left|\frac{\mathrm{CM}(D)\cap\mathrm{red}^{-1}(C_n)}{\#\mathrm{CM}(D)}- \frac{q^2-1}{2q^{n+1}}\right|\ll_{n,\varepsilon} |D|^{-1/4+\varepsilon}\quad\mbox{ as }|D|=q^{\deg D}\to\infty.\]
\end{customtheorem}

The next result treats the remaining inert case. That is, it considers a sequence of inert discriminants $D$ for which $\deg(D_K)=0$, or equivalently, the quadratic field $K_D=\F_{q^2}(T)$ is fixed.

\begin{customtheorem}{B}[]\label{thmB}
Let $K=\F_{q^2}(T)$ and consider a sequence of discriminants of the form $D=D_{K}f^2$. For $n\geq0$, if $\deg f$ has different parity than $n$, the intersection $\mathrm{red}(\mathrm{CM}(D))\cap C_n$ is empty. Moreover, if $\deg f$ is always even, then for all $\varepsilon>0$ we have \[\left|\frac{\mathrm{CM}(D)\cap\mathrm{red}^{-1}(C_0)}{\#\mathrm{CM}(D)}- \frac{q-1}{q}\right|\ll_\varepsilon |D|^{-1/4+\varepsilon}\quad\mbox{ as }|D|=q^{2\deg f}\to\infty.\] For $n\geq1$, if $\deg f$ has same parity as $n$, then for all $\varepsilon>0$ we have \[\left|\frac{\mathrm{CM}(D)\cap\mathrm{red}^{-1}(C_n)}{\#\mathrm{CM}(D)}- \frac{q^2-1}{q^{n+1}}\right|\ll_{n,\varepsilon} |D|^{-1/4+\varepsilon}\quad\mbox{ as }|D|=q^{2\deg f}\to\infty.\]
\end{customtheorem}

\begin{remark}
    In \cite{AABP24}, the authors consider sets $C_\varepsilon(D)$, for $0<\varepsilon\leq1$, whose cardinality is estimated as a step towards proving the finiteness of Drinfeld singular moduli that are units. The set $\mathrm{CM}(D)\cap\mathrm{red}^{-1}(C_0)$ can be seen to agree with the set $C_1(D)$. 
\end{remark}

Finally, we treat the ramified case.

\begin{customtheorem}{C}[]\label{thmC}
Consider a sequence of odd degree discriminants $D$. For all $n\geq0$ and $\varepsilon>0$, we have \[\left|\frac{\mathrm{CM}(D)\cap\mathrm{red}^{-1}(c_{n+1})}{\#\mathrm{CM}(D)}- \frac{q-1}{q^{n+1}}\right|\ll_\varepsilon |D|^{-1/4+\varepsilon}\quad\mbox{ as }|D|=q^{\deg D}\to\infty\]
\end{customtheorem}

\subsection*{Overview of the article} In contrast to the supersingular locus, the number of irreducible components of $\Gamma\backslash\overline{\Omega}$ is not finite. Thus, our proof does not stem from the methods employed in the case of reduction modulo a finite prime. Instead, we solve this problem using harmonic analysis on the tree, an approach reminiscent of \cite{dukeHiperbolic}. We now briefly explain how this is achieved. 

The goal of Section \ref{S reduction of CMDM} is to recall the main definitions and also reformulate our main results in terms of weak* convergence of measures on the sets of vertices and edges of $\Gamma\backslash\Tcali$. We start in Section \ref{Drinfeldmodules}, where we provide an overview of the theory of Drinfeld modules of rank 2 over $\C_\infty$ and explain the uniformization of their classifying space $Y(\C_\infty)$ by $\Gamma\backslash\Omega$. In section \ref{S CM}, we review some facts about CM Drinfeld modules. In particular, we recall the parametrization of $\mathrm{CM}(D)$ by the class group $\mathrm{Cl}(\mathcal{O}_D)$ of $\mathcal{O}_D$.

In Section \ref{S tree and building map}, we introduce the Bruhat-Tits tree $\Tcali$ and the so-called building map $\lambda\colon \Omega\to \Tcali(\R)$, whose fibers agree with the fibers of the reduction $\pi\colon \Omega\to\overline{\Omega}$. Here, $\Tcali(\R)$ denotes the realization of $\Tcali$. The main reference for this section is \cite{gekelerreversat}. In section \ref{S reduction of CM points}, we exploit the connection between $\lambda$ and $\pi$ to study the reduction of CM points. In particular, we prove our previous claim that CM points associated to inert discriminants reduce to a unique irreducible component $C_n$, whereas those associated to odd discriminants are mapped to intersection points. 

Let $\mu(v_0):=(q-1)/2q$ and for $n\geq1$, let $\mu(v_n):=(q^2-1)/2q^{n+1}$ and $\nu(e_n):=q^{-n}(q-1)$. Then $\mu$ (resp. $\nu$) defines a probability measure on the set of vertices (resp. edges) of $\Gamma \backslash \Tcali$. Using the results of Section \ref{S reduction of CM points}, in Section \ref{measures},
we reformulate our main results as Theorem \ref{thmA'}, Theorem \ref{thmB'} and Theorem \ref{thmC'}. These are equivalent to Theorem \ref{thmA}, Theorem \ref{thmB} and Theorem \ref{thmC} respectively, but formulated in terms of $\mathrm{Cl}(\mathcal{O}_D)$, the building map $\lambda$, and estimates for certain averages (usually called \textit{Weyl sums}) attached to elements in the $L^2$ space associated to $\mu$ and $\nu$. For example, Theorem \ref{thmA} is equivalent to the statement that for every $f\in L^2(\mu)$, $\varepsilon>0$ and a sequence of inert discriminants $D$ as in Theorem \ref{thmA}, \begin{equation}\label{testexample}\left|\frac{1}{\#\mathrm{Cl}(\mathcal{O}_D)}\sum_{[\afrak]\in \mathrm{Cl}(\mathcal{O}_D)}f\left(\Gamma\lambda(z_\afrak)\right)- \int_{} fd\mu\right|\ll_f|D|^{-1/4+\varepsilon}\quad\mbox{ as }|D|=q^{\deg D}\to\infty.\end{equation} where $z_\afrak$ is any element in $\Omega$ with $A+Az_{\mathfrak{a}}=\mathfrak{a}$ in $\mathrm{Cl}(\mathcal{O}_D)$. If we use $\delta_\phi$ to denote the Dirac measure supported on $\phi\in Y(\C_\infty)$, \eqref{testexample} says that the pushforward of $\frac{1}{\#\mathrm{CM}(D)}\sum_{\phi\in\mathrm{CM}(D)}\delta_\phi$ under $\mathrm{red}$ converges to $\mu$ in the weak* topology.

In Section \ref{sectionspectraldecomposition}, we reduce the proof of the reformulated theorems to the study of the Weyl sums associated to a certain class of functions, the eigenfunctions of the \textit{adjacency operator}. This is done by providing an explicit spectral decomposition for the adjacency operator acting on vertices and edges. For example, every function in $L^2(\mu)$ can be written as \[f(v_n)=\int fd\mu+\langle f,u_\mathrm{alt}\rangle u_\mathrm{alt}(v_n)+\frac{4q}{(q^2-1)}\int_{0}^{\pi}\langle f,E_{\v}(\cdot,\frac{1}{2}+i\frac{\theta}{\log q})\rangle E_{\v}(v_n,\frac{1}{2}+i\frac{\theta}{\log q})d\theta,\] where $u_\mathrm{cst}(v_n)=(-1)^n$ and for $s\in \C$, $E_{\v}(v_n,s)$ is a certain Eisenstein series. This allows us to focus on \eqref{testexample} only in the cases when $f$ equals $u_\mathrm{cst}$ or $E_{\v}(v_n,1/2+i\theta/\log(q))$. On the other hand, the spectral decomposition of the adjacency operator on $L^2(\nu)$ will enable us to focus only on the study of certain Eisenstein series $E_{\e}(e_n,s)$ (see Theorem \ref{Thm spectral resolution on edges}). The spectral decomposition over $L^2(\mu)$ can be found in \cite{efrat} and we explain it in Section \ref{S spectral decomposition vertices}. The same reference serves as a guide to the case of $L^2(\nu)$ which is treated in Section \ref{S spectral decomposition edges}. Section \ref{S alternatingfn} is an interlude where we treat the Weyl sum of the alternating function $u_\mathrm{alt}$.

At this point, the proof of the main theorems boils down to ha ving the right estimates for the Weyl sums of the Eisenstein series $E_{\v}(v_n,s)$ and $E_{\e}(e_n,s)$. This is done in a unified way by using Eisenstein series on $\Gamma\backslash\Omega$. In Section \ref{sectionfundamentaldiscriminant}, we prove Theorem \ref{thmA'} and Theorem \ref{thmC'} in the case of having a sequence of square-free discriminants $D$. In doing so, we use Lindel\"of-type bounds from \cite{bucur2018traces} and \cite{Diaconu}. Finally, mimicking \cite{clozelullmo}, in Section \ref{sectiongeneraldiscriminant} we introduce Hecke operators to prove Theorem \ref{thmB'} and extend Theorems \ref{thmA'} and \ref{thmC'} to general discriminants.



\section{Reduction of CM Drinfeld modules}\label{S reduction of CMDM}
\subsection{Drinfeld modules of rank $2$ over $\C_\infty$}\label{Drinfeldmodules}

We refer to \cite{papikianlibro} for general background on the theory of Drinfeld modules. Recall that a Drinfeld $A$-module of rank $2$ over a field extension $L$ of $k$ is defined by a twisted polynomial \[\phi=T+g\tau+\Delta \tau^2\in L\{\tau\},\] where $\Delta\neq0$. The Drinfeld module $\phi$ is said to be CM if its endomorphism ring \[\mathrm{End}(\phi)=\{f\in L\{\tau\}\mid f\phi=\phi f\}\] is strictly larger than $A$, in which case it is an $A$-order in an imaginary quadratic extension of $k$, that is, an extension that does not split at $\infty$.

Similarly to the case of elliptic curves over $\C$, Drinfeld $A$-modules over $\C_\infty$ admit an analytic description in terms of lattices. More precisely, given a rank $2$ $A$-lattice $\Lambda\subseteq\C_\infty$, there exists an analytic isomorphism $e_\Lambda\colon \C_\infty/\Lambda\to\C_\infty$ and a twisted polynomial $\phi_\Lambda=T+g(\Lambda)\tau+\Delta(\Lambda)\tau^2\in\C_\infty\{\tau\}$ such that the following diagram commutes. 

\begin{center}
\begin{tikzcd}
\C_\infty/\Lambda \arrow[r, "\times T"] \arrow[d, "e_\Lambda"] & \C_\infty/\Lambda \arrow[d, "e_\Lambda"]\\ \C_\infty \arrow[r,"\phi_\Lambda"]
& \C_\infty.
\end{tikzcd}
\end{center}

By \cite{drinfeld74}, the assignment $\Lambda\mapsto\phi_\Lambda$ defines a bijection between homothety classes of rank $2$ $A$-lattices in $\C_\infty$ and $Y(\C_\infty)$. Each homothety class admits a representative of the form $\Lambda_z:=A+zA$ with $z\in \Omega$ and $A+zA=A+z'A$ if and only if $\Gamma z=\Gamma z'$. Therefore, we obtain the identification \[\Gamma\backslash\Omega\cong Y(\C_\infty).\]

For $z\in \Omega$, its \textit{imaginary part} is defined by \[|z|_i=\min_{w\in k_\infty}|z-w|.\]

By \cite[Proposition 6.5]{gekeler99}, the set \[\mathcal{F}=\{z\in \Omega\mid |z|=|z|_i\geq 1\}\] is a fundamental domain for the action of $\Gamma$ on $\Omega$.

\subsection{CM Drinfeld modules over $\C_\infty$}\label{S CM} The endomorphism ring of $\phi_\Lambda$ can be identified with the ring of multipliers $\{\lambda\in \C_\infty\mid \lambda \Lambda\subseteq\Lambda\}$ of $\Lambda$. If $\Lambda$ is homothetic to $\Lambda_z$, then $\phi_\Lambda$ has CM if and only if $k(z)$ is an imaginary quadratic extension of $k$. For $K/k$ an imaginary quadratic extension with $K\subseteq\C_\infty$, we denote by $\mathrm{CM}(K)$ the set of CM Drinfeld modules of rank $2$ over $\C_\infty$ with CM by an order inside $K$. For an $A$-order $\mathcal{O}\subseteq K$, $\mathrm{CM}(\mathcal{O})$ denotes the collection of CM Drinfeld modules of rank $2$ over $\C_\infty$ with CM by $\mathcal{O}$. Recall that a fractional $\mathcal{O}$-ideal $\Lambda\subseteq K$ is proper if its ring of multipliers is exactly $\mathcal{O}$. The class group $\mathrm{Cl}(\mathcal{O})$ is the finite abelian group obtained as the quotient of the group of proper $\Ocali$-ideals by the subgroup of principal ideals. Its cardinality will be denoted by $h(\Ocali)$. It follows that


\begin{proposition}\label{CMDbijection}
    Let $K/k$ be an imaginary quadratic extension with $K\subseteq\C_\infty$ and let $\mathcal{O}\subseteq K$ be an $A$-order. The assignment $\Lambda\mapsto \phi_\Lambda$ induces the following bijections:
    \begin{enumerate}
        \item $A$-lattices $\Lambda\subseteq K$, up to multiplication by $K^\times$ and $\mathrm{CM}(K)$.
        \item The class group $\mathrm{Cl}(\mathcal{O})$ and $\mathrm{CM}(\mathcal{O})$.
    \end{enumerate}
\end{proposition}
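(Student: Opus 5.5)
The plan is to deduce both bijections from Drinfeld's correspondence of Section \ref{Drinfeldmodules}: $\Lambda\mapsto\phi_\Lambda$ is a bijection between homothety classes of rank $2$ $A$-lattices in $\C_\infty$ and $Y(\C_\infty)$, with $\mathrm{End}(\phi_\Lambda)$ identified with the ring of multipliers $\{\lambda\in\C_\infty\mid\lambda\Lambda\subseteq\Lambda\}$. Everything then reduces to a statement about lattices: describe which homothety classes give CM by an order in $K$, resp. by exactly $\mathcal{O}$, and show that $\C_\infty^\times$-homothety among the relevant lattices coincides with $K^\times$-homothety.

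For (1), first check that the map is well defined and lands in $\mathrm{CM}(K)$. If $\Lambda\subseteq K$ is a rank $2$ $A$-lattice, then $\Lambda\otimes_A k=K$, so any multiplier of $\Lambda$ lies in $K$. Choosing a basis, $\Lambda=\omega(A+zA)$ with $z\in K\smallsetminus k$, so $k(z)=K$ is imaginary quadratic and $\phi_\Lambda$ has CM by an order inside $K$.

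For surjectivity, take $\phi\in\mathrm{CM}(K)$ and write $\phi\cong\phi_{\Lambda_z}$ with $z\in\Omega$. Then $\mathrm{End}(\phi)$ is an order $\mathcal{O}'\subseteq K$; here I use the fixed embedding $K\subseteq\C_\infty$ and the identification of endomorphisms with multipliers. Pick $\alpha\in\mathcal{O}'\smallsetminus A$. From $\alpha\cdot1\in A+zA$ we get $\alpha=a+bz$ with $b\neq0$, so $z=(\alpha-a)/b\in K$. Hence $\Lambda_z\subseteq K$ is a preimage.

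For injectivity, suppose $\Lambda,\Lambda'\subseteq K$ with $\Lambda'=c\Lambda$ for some $c\in\C_\infty^\times$. Take $0\neq x\in\Lambda$; then $cx\in\Lambda'\subseteq K$, so $c\in K^\times$. Thus homothety classes within $K$ are exactly $K^\times$-classes.

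For (2), restrict (1) to lattices $\Lambda\subseteq K$ whose multiplier ring is exactly $\mathcal{O}$. These are by definition the proper fractional $\mathcal{O}$-ideals: a rank $2$ $A$-lattice in $K$ is an $\mathcal{O}$-module exactly when $\mathcal{O}$ is contained in its multipliers. Such lattices correspond precisely to $\mathrm{CM}(\mathcal{O})$, because $\mathrm{End}(\phi_\Lambda)$ equals the multiplier ring. Moreover, the multiplier ring is invariant under $K^\times$-scaling, so this subset is a union of classes. Quotienting the proper ideals by $K^\times$ is the same as quotienting by principal fractional ideals $c\mathcal{O}$, which gives $\mathrm{Cl}(\mathcal{O})$.

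The only delicate point is the identification of $\mathrm{End}(\phi_\Lambda)$ with the multiplier ring, compatibly with the embedding $K\subseteq\C_\infty$. The paper states this in Section \ref{S CM}, and I would simply invoke it. The remaining steps are routine lattice bookkeeping.
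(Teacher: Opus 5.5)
Your argument is correct and is essentially the intended one: the paper gives no separate proof, stating the proposition as an immediate consequence of Drinfeld's lattice correspondence together with the identification of $\mathrm{End}(\phi_\Lambda)$ with the ring of multipliers of $\Lambda$. You leave two standard points implicit: a rank $2$ $A$-submodule of $K$ is discrete in $\C_\infty$ because $\infty$ does not split in $K$, and $K^\times$-orbits of proper $\mathcal{O}$-ideals coincide with cosets of principal ideals because proper ideals of a quadratic order are invertible.
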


Call a non-square polynomial $D\in A$ an imaginary discriminant if $K_D/k$ is an imaginary quadratic extension. Recall that $\mathcal{O}_D:=A[\sqrt{D}]$, and let $\mathrm{CM}(D):=\mathrm{CM}(\mathcal{O}_D)$, $\mathrm{Cl}(D):=\mathrm{Cl}(\mathcal{O}_D)$ and $h(D):=h({\Ocali}_D)$.

\begin{lemma}\label{discriminant}
Let $D$ be an imaginary discriminant and $z\in\Omega$ quadratic over $k$. Then $\phi_{\Lambda_z}\in \mathrm{CM}(D)$ if and only if there exist coprime $a,b,c \in A$ such that $az^2+bz+c=0$ and $D=b^2-4ac$.
\end{lemma}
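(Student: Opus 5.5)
The plan is to compute the ring of multipliers of $\Lambda_z=A+zA$ directly in terms of the minimal polynomial of $z$ over $A$. By the discussion preceding Proposition \ref{CMDbijection}, $\mathrm{End}(\phi_{\Lambda_z})$ is identified with $\{\lambda\in\C_\infty\mid \lambda\Lambda_z\subseteq\Lambda_z\}$. So $\phi_{\Lambda_z}\in\mathrm{CM}(D)$ if and only if this ring equals $\mathcal{O}_D=A[\sqrt D]$ (as subrings of $\C_\infty$, after choosing the square root inside $k(z)$).

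First I normalize the minimal polynomial. Since $z$ is quadratic over $k$ and $A$ is a PID, I can clear denominators in its minimal polynomial. Dividing by the gcd of the coefficients then gives $a,b,c\in A$ with $\gcd(a,b,c)=1$ and $az^2+bz+c=0$. This triple is unique up to $\F_q^\times$. Rescaling by $u\in\F_q^\times$ changes $b^2-4ac$ by $u^2$, which does not change $A[\sqrt{b^2-4ac}]$. I will keep track of this unit ambiguity, since the statement implicitly allows the triple to be rescaled.

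Next I compute the multiplier ring of $A+zA$. Take $\lambda=x+yz$ with $x,y\in k$. The conditions $\lambda\cdot1\in\Lambda_z$ and $\lambda z\in\Lambda_z$ are analysed using $z^2=-(bz+c)/a$. We get
\[
\lambda z = xz + y z^2 = -\frac{yc}{a} + \Bigl(x-\frac{yb}{a}\Bigr)z .
\]
So we need $x,y\in A$ together with $a\mid yc$ and $a\mid yb$. Because $\gcd(a,b,c)=1$, the last two conditions are equivalent to $a\mid y$. Hence the multiplier ring is $A+A\,az$, exactly as in the classical case. Moreover $az$ is a root of $X^2+bX+ac$, so $az=\frac{-b+\sqrt{b^2-4ac}}{2}$ for a suitable choice of square root. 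Since $q$ is odd, $2$ is a unit, and therefore
\[
A+Aaz=A\Bigl[\tfrac{-b+\sqrt{b^2-4ac}}{2}\Bigr]=A[\sqrt{b^2-4ac}].
\]

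Finally I compare discriminants. The ring $A[\sqrt{D'}]$ has discriminant $4D'$, so $A[\sqrt{D'}]=A[\sqrt{D}]$ forces $D'=uD$ for a unit $u\in\F_q^{\times}$. In fact, since both square roots lie in the same field, $u$ must be a square. Rescaling $(a,b,c)$ by a square root of $u$ (which exists in $\F_q$) gives $b^2-4ac=D$ exactly. This proves the forward direction. The converse is the computation above read backwards.

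The main obstacle is bookkeeping rather than depth. One must be careful about the unit ambiguity. One must also check that equality of the rings $A[\sqrt{D}]$ and $A[\sqrt{D'}]$ inside $\C_\infty$ really forces $D'=D$ up to a square unit; this is done by comparing discriminants/indices.
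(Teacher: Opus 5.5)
Your proposal is correct and follows essentially the same argument as the paper. Both compute the multiplier ring of $\Lambda_z$ as $A+A\,az$ using the primitivity of $(a,b,c)$, identify it with $A[\sqrt{b^2-4ac}]$ because $2\in\F_q^\times$, and then rescale $(a,b,c)$ by a unit of $\F_q^\times$ to get $b^2-4ac=D$. Your use of $\gcd(a,b,c)=1$ is the correct condition; the paper's wording that $a$ is coprime to both $b$ and $c$ is imprecise. The square-unit step is settled most cleanly by writing $\sqrt{D'}=x+y\sqrt{D}$ with $x,y\in A$, which forces $x=0$ and $y\in\F_q^\times$; your argument that \emph{both roots lie in the same field} needs exactly this extra step.
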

\begin{proof} Suppose that $\phi_{\Lambda_z}\in \mathrm{CM}(D)$ and let $az^2+bz+c=0$ be any quadratic equation for $z$ with $a,b,c\in A$ coprime. For $\lambda\in\C_\infty$,  $\lambda\Lambda_z\subseteq\Lambda_z$ if and only if there exist $u,v,n,m$ in $A$ such that $\lambda=n+mz$ and $\lambda z=u+vz$. Writing $\lambda z=nz+m\left(\frac{-bz-c}{a}\right)=\frac{-cm}{a}+\left(n-\frac{bm}{a}\right)z$, we conclude that $\lambda\Lambda_z\subseteq \Lambda_z$ if and only if $\lambda\in \Lambda_{az}$ because $a$ is coprime to both $b$ and $c$. Since we are assuming $2\in\F_q^\times$, $\Ocali_D=\Lambda_{az}=A[\frac{-b\pm\sqrt{b^2-4ac}}{2}]=A[-b+\sqrt{b^2-4ac}]=A[\sqrt{b^2-4ac}]$. Thus, $\phi_{\Lambda_z}\in\mathrm{CM}(D)$ if and only if $b^2-4ac$ is of the form $D\alpha^2$ for some $\alpha\in\F_q^\times$. Multiplying $(a,b,c)$ by $\alpha^{-1}$ allows us to conclude.   
\end{proof}

For $D$ an imaginary discriminant, we denote by $\Omega_D$ the subset of $z\in \Omega\cap K_D$ satisfying a quadratic equation $az^2+bz+c=0$ with $a,b,c\in A$ coprime and $D=b^2-4a$. If $\mathfrak{a}$ is a proper $\mathcal{O}_D$-ideal, up to the action of $\Gamma$, there exists a unique $z_\mathfrak{a}\in \Omega_D$ such that $[\mathfrak{a}]=[\Lambda_{z_\mathfrak{a}}]$ in $\mathrm{Cl}(\Ocali_D)$. The map sending $[\afrak]\in \mathrm{CM}(D)$ to the isomorphism class of $\phi_{z_\mathfrak{a}}:=\phi_{\Lambda_{z_\mathfrak{a}}}$ in $\mathrm{CM}(D)$ is well-defined and it is a bijection by Proposition \ref{CMDbijection}. We can always assume that $z_\mathfrak{a}\in\mathcal{F}$.

\subsection{The Bruhat-Tits tree and the building map}\label{S tree and building map}

Recall that the Bruhat-Tits tree $\Tcali$ of $\PGL_2(k_\infty)$ is the $(q+1)$-regular tree constructed as follows. Two rank-$2$ $\mathcal{O}_\infty$-modules $L$ and $L'$ in $k_\infty^2$ are \textit{equivalent} if there exists $x\in k_\infty^\times$ such that $L'=xL$. Write $[L]$ for the equivalence class of $L$. Two classes $[L]$ and $[L']$ are \textit{adjacent} if there exists $L''\in[L']$ such that $L''\subseteq L$ and $L/L'' \cong\mathbb{F}_q$. Then $\Tcali$ is the combinatorial graph whose vertices $\mathcal{V}(\Tcali)$ are the classes $[L]$ and two vertices are connected if the respective classes are adjacent. We denote by $\mathcal{E}(\Tcali)$ the set of edges in $\Tcali$, i.e. the set of adjacent vertices.

We consider $k_\infty^2$ as row vectors with a right action of $\GL_2(k_\infty)$ given by multiplication on the right. It induces a natural left action of $\PGL_2(k_\infty)$ on $\mathcal{V}(\mathcal{T})$ given by $g[L]=[Lg^{-1}]$. This action respects the connections in $\mathcal{V}(\Tcali)$, inducing also a left action on $\Tcali$. The action of $\PGL_2(k_\infty)$ is  transitive on both $\mathcal{V}(\Tcali)$ and $\mathcal{E}(\Tcali)$. Consider the lattice $L_0:=\mathcal{O}_\infty^2$ and its sublattice $L_1:=\begin{pmatrix}
    T&0\\0&1
\end{pmatrix}L_0=L_0\begin{pmatrix}
    T^{-1}&0\\0&1
\end{pmatrix}=T^{-1}\mathcal{O}_\infty\times\mathcal{O}_\infty$. Denote $\boldsymbol{v}=[L_0]$ and $\boldsymbol{e}=\{[L_0],[L_1]\}$. The stabilizer of $\boldsymbol{v}$ in $\PGL_2(k_\infty)$ is the maximal open compact subgroup $\K:=\PGL_2(\mathcal{O}_\infty)$ and the stabilizer of $\boldsymbol{e}$ is the subgroup $\mathbb{I}$ generated by the Iwahori subgroup $\Gamma_0(T^{-1})=\left\{\begin{pmatrix}
    a&b\\c&d
\end{pmatrix}\in\K\mid c\in T^{-1}\mathcal{O}_\infty\right\}$ (which fixes $[L_0]$ and $[L_1]$) and the element $w_\pi=\begin{pmatrix}
    0&1\\T^{-1}&0
\end{pmatrix}$ (which interchanges $[L_0]$ and $[L_1]$). From this we obtain two uniformizations: \[\PGL_2(k_\infty)/\mathbb{K}\cong \mathcal{V}(\Tcali),\quad\mbox{induced by }g\mapsto g\boldsymbol{v},\] and \[\PGL_2(k_\infty)/\mathbb{I}\cong \mathcal{E}(\Tcali),\quad\mbox{induced by }g\mapsto g\boldsymbol{e}.\]

The geometric realization $\Tcali(\R)$ of $\Tcali$ is the topological space obtained by attaching a unit interval to each non-oriented edge of $\Tcali$, with endpoints identified whenever the corresponding edges share a vertex. For an edge $e\in\mathcal{E}(\Tcali)$, we denote by $e^\circ$ the open interval of $\Tcali(\R)$ in between the two vertices defining $e$. The space $\Tcali(\R)$ can be described in terms of norms in $k_\infty^2$. Recall that a norm on $k_\infty^2$ is a map $||\cdot||\colon k_\infty^2\to\R_{\geq0}$ satisfying $||(x,y)||=0$ if and only if $x=y=0$, $||\lambda(x,y)||=|\lambda|||(x,y)||$ for all $ \lambda\in k_\infty$, and $||(x,y)+(x',y')||\leq\sup\{||(x,y)||,||(x',y')||\}$. We say that two norms on $k_\infty^2$ are similar if they differ by a real multiplicative constant. For $g\in \GL_2(k_\infty)$, the rule that sends a norm $||\cdot||$ to $(x,y)\mapsto||(x,y)g||$ induces left action of $\PGL_2(k_\infty)$ on the similarity classes of norms. 

Associate to each vertex $[L]$ the similarity class of the norm \[||(x,y)||_L:=\inf\{|z|: z\in k_\infty\mbox{ and }(x,y)\in zL\}=q^{-n},\] where $n=\sup\{m\in\Z\mid (x,y)\in T^{-m}L\smallsetminus T^{-(m+1)}L\}$. Now consider an oriented edge $e^+=([L],[L'])$ with $T^{-1}L'\subseteq L\subseteq L'$. Let $0<t<1$ and $e_t^+=(1-t)[L]+t[L']\in e^\circ$. We assign to $e_t^+$ the class of the norm \[||(x,y)||_{e_t^+}=\sup\{|(x,y)|_L,q^{t}|(x,y)|_{L'}\}.\]

\begin{theorem}\label{buildintthm}\hfill
\begin{enumerate}
    \item The previous construction induces a $\PGL_2(k_\infty)$-equivariant bijection between $\Tcali(\R)$ and the space of similarity classes of norms on $k_\infty^2$.
    \item Under the bijection in \textup{(1)}, the map $\lambda\colon \Omega\to \Tcali(\R)$ induced by $\lambda(z)(x,y)=|xz+y|$ is $\PGL_2(k_\infty)$-equivariant and for each irreducible component $C$ of $\overline{\Omega}$, there exists a unique vertex $[L_C]$ of $\Tcali$ such that \begin{equation}\label{building}
\pi^{-1}(C\smallsetminus C(\F_q))=\lambda^{-1}([L_C]).
\end{equation}
\item For two irreducible components $C$ and $C'$ of $\overline{\Omega}$ that intersect, \begin{equation}\label{buildingedge}\pi^{-1}(C\cap C')=\lambda^{-1}(e^\circ),\end{equation} where $e=\{[L_C],[L_{C'}]\}$.
\item For each edge $e=\{[L],[L']\}$, there exists a unique $n\in\Z$ and $x\in k_\infty \mod \pi^{n+1}\mathcal{O}_\infty$ such that $\lambda^{-1}(e^\circ)=\{z\in\Omega\mid q^{-(n+1)}<|z-x|<q^{-n}\}$ and possible after switching the role of $[L]$ and $[L']$, $\lambda^{-1}([L])=\{z\in\Omega\mid |z-x|=|z-x|_i=q^{-n}\}$ and  $\lambda^{-1}([L'])=\{z\in\Omega\mid |z-x|=|z-x|_i=q^{-(n+1)}\}$
\end{enumerate}
\end{theorem}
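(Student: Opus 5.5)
This theorem collects standard facts from Goldman–Iwahori and from \cite{gekelerreversat} (see also \cite{reversat}). I would prove it by reducing everything to explicit computations over the standard vertex $\v$ and the standard edge $\e$, and then transporting the results by $\PGL_2(k_\infty)$-equivariance.

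For (1), I would first check that the construction is well defined and equivariant. Replacing $L$ by $xL$ multiplies $\|\cdot\|_L$ by a constant $|x|^{-1}$, so similarity classes are unchanged. The formula $\|(x,y)g\|_L=\|(x,y)\|_{Lg^{-1}}$ gives equivariance on vertices, and the same formula applied termwise to the supremum gives equivariance on edge points. For bijectivity I would use the Goldman–Iwahori theorem: every norm on $k_\infty^2$ is diagonalizable, i.e. there are $g\in\GL_2(k_\infty)$ and $r\in\R$ with $\|(x,y)g\|\sim\sup\{|x|,q^{r}|y|\}$.
\begin{itemize}
\item Up to similarity and the diagonal action of $T$, we may take $r\in[0,1)$.
\item For $r=0$ this norm is $\|\cdot\|_{L_0}$.
\item For $0<r<1$ it is the point $e^+_r$ on $\e$.
\end{itemize}
This gives surjectivity. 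For injectivity, I would recover $[L]$ from a norm as the class of its unit ball, and recover $t$ together with the pair $\{L,L'\}$ from the set of values and the balls of the norm. Equivalently, I would check that the stabilizer in $\PGL_2(k_\infty)$ of the norm class attached to $\v$ is $\K$, and that of an interior point of $\e$ is $\Gamma_0(T^{-1})$. This matches the uniformizations $\PGL_2(k_\infty)/\K$ and $\PGL_2(k_\infty)/\mathbb{I}$.

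For (2), I would first show that $(x,y)\mapsto|xz+y|$ is a norm. Ultrametricity is clear. Definiteness holds because $z\notin k_\infty$, so $xz+y\neq0$ unless $x=y=0$. Equivariance follows from $|(xz+y)|$ transforming under $z\mapsto gz$ by a factor $|cz+d|^{-1}$, which is a similarity constant. Next I would compute $\lambda^{-1}$ of the standard cells. Say $|z|=|z|_i$, i.e. $z$ has no closer approximation in $k_\infty$. Then $|xz+y|=\sup\{|x||z|,|y|\}$, and this equals $\sup\{|x|,q^{r}|y|\}$ up to a constant when $|z|=q^{-r}$. Hence:
\begin{itemize}
\item $\lambda^{-1}(\v)=\{|z|=|z|_i=1\}$;
\item $\lambda^{-1}(\e^\circ)=\{q^{-1}<|z|<1\}$, once we note that in that annulus automatically $|z|_i=|z|$ since $|z|\notin q^{\Z}$;
\item $\lambda^{-1}([L_1])=\{|z|=|z|_i=q^{-1}\}$.
\end{itemize}
Translating by $z\mapsto T^{-n}z+x$ gives (4) for a general edge, using transitivity on edges. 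Uniqueness of $n$ and of $x$ modulo $\pi^{n+1}\mathcal{O}_\infty$ comes from the annulus determining its center class and radii.

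Finally, I would compare with the reduction $\pi$. In \cite{gekelerreversat} (and originally in Drinfeld), $\overline{\Omega}$ is built by gluing the reductions of the affinoids $\lambda^{-1}([L])$, which are $\mathbb{P}^1$ minus $q+1$ residue discs. Their reductions are $\mathbb{P}^1\smallsetminus\mathbb{P}^1(\F_q)$, and the annuli $\lambda^{-1}(e^\circ)$ reduce to the ordinary double points. Identifying components $C$ with vertices $[L_C]$ through this covering yields (2) and (3). The \textbf{main obstacle} is this compatibility step: it requires actually recalling the formal-model construction of $\overline{\Omega}$, rather than a purely norm-theoretic argument. I would handle it by citing the construction in \cite{gekelerreversat} and checking that it is done on the standard vertex, using $\Gamma_0(T^{-1})$-invariance and then equivariance.
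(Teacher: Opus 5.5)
Your plan is essentially the paper's. The paper proves this theorem only by citing Sections 1.4--1.5 of \cite{gekelerreversat}. Your explicit check on the standard vertex and edge is exactly what the paper carries out afterwards in Lemma~\ref{distinguishedred}. Your deferral of the comparison with $\pi$ in (2)--(3) to the construction of $\overline{\Omega}$ in \cite{gekelerreversat} is also what the paper does.

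You should fix one normalization slip. With the paper's definitions, $L_1=T^{-1}\mathcal{O}_\infty\times\mathcal{O}_\infty$ and $\|(x,y)\|_{\e^+_t}=\sup\{q^t|x|,|y|\}$. Hence $\lambda^{-1}(\e^\circ)=\{1<|z|<q\}$ and $\lambda^{-1}([L_1])=\{|z|=|z|_i=q\}$. The norms $\sup\{|x|,q^r|y|\}$ and the annulus $q^{-1}<|z|<1$ that you attach to $\e$ in fact describe the other edge at $\v$. That edge joins $[L_0]$ to $[\mathcal{O}_\infty\times T^{-1}\mathcal{O}_\infty]$ and is the image of $\e$ under $\begin{pmatrix}0&1\\1&0\end{pmatrix}\in\K$. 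Its pointwise stabilizer is the transpose of $\Gamma_0(T^{-1})$. Separately, the stabilizer of the midpoint of $\e$ is $\mathbb{I}$, not $\Gamma_0(T^{-1})$, because $w_\pi$ fixes it. None of this damages (1) or (4), since you still compute the preimage of some edge and the swap moves one picture to the other. As written, though, your formulas contradict Lemma~\ref{distinguishedred}.

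Your computation for points with $|z|=|z|_i$ also only gives the inclusions $\supseteq$ in your descriptions of $\lambda^{-1}(\v)$ and $\lambda^{-1}(\e^\circ)$. For equality, write an arbitrary $z$ as $w+z'$ with $w\in k_\infty$ a best approximation, so that $|z'|=|z'|_i$. Then use $\lambda(z)=\begin{pmatrix}1&w\\0&1\end{pmatrix}\lambda(z')$ together with the injectivity from (1); this forces $|z'|=1$ and $|w|\le 1$ when $\lambda(z)=\v$.
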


\begin{proof}
    Items (1), (2), (3) and (4) can be found in Sections 1.4 and 1.5 of\cite{gekelerreversat}.
\end{proof}

The map $\lambda$ is called the \textit{building map}. It will be necessary for our purposes to specialize (4) to the case of our distinguished vertex $\boldsymbol{e}$. Let $\boldsymbol{e}^+=([L_0],[L_1])$ be an orientation for $\boldsymbol{e}$.

\begin{lemma}\label{distinguishedred}
Let $0<t<1$ and $z\in\Omega$. If $|z|=q^t$ \begin{equation}\label{normramified}||(x,y))||_{\boldsymbol{e}_t^+}=\sup\{q^t|x|,|y|\}=|zx+y|,\end{equation} and then $\lambda(z)=\boldsymbol{e}_t^+$. If $|z|=|z|_i=1$ \begin{equation}\label{norminert}
    ||(x,y)||_{L_0}=\sup\{|x|,|y|\}=|zx+y|,
\end{equation} and then $\lambda(z)=\v$. In particular $\lambda^{-1}(\boldsymbol{e}^\circ)=\{z\in \Omega\mid 1<|z|<q\}$ and $\lambda^{-1}(\v)=\{z\in \Omega\mid |z|=|z|_i=1\}$.
\end{lemma}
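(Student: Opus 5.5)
We need to verify the two norm identities directly from the ultrametric inequality, then read off $\lambda(z)$ from Theorem \ref{buildintthm}(1)--(2), and finally deduce the fiber descriptions.

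\emph{Ramified case.} Let $|z|=q^t$ with $0<t<1$. Since $|k_\infty^\times|=q^{\Z}$ and $t\notin\Z$, for $x,y\in k_\infty$ the quantities $|zx|=q^t|x|$ and $|y|$ can never coincide unless both vanish. The strict ultrametric equality therefore gives $|zx+y|=\sup\{q^t|x|,|y|\}$. To match this with $\|\cdot\|_{\e_t^+}$, we compute the lattice norms. Since $L_0=\mathcal{O}_\infty^2$, we have $\|(x,y)\|_{L_0}=\sup\{|x|,|y|\}$. Since $L_1=T^{-1}\mathcal{O}_\infty\times\mathcal{O}_\infty$, we have $\|(x,y)\|_{L_1}=\sup\{q|x|,|y|\}$. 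Hence
\[\sup\{\|\cdot\|_{L_0},\,q^{t}\|\cdot\|_{L_1}\}=\sup\{|x|,|y|,q^{1+t}|x|,q^t|y|\}=q^t\sup\{q|x|,|y|\}.\]
This is where the orientation and normalization must be handled carefully; it is the main point of care. The inclusions are $L_1\supseteq L_0\supseteq T^{-1}L_1$, so the roles of $L$ and $L'$ in the definition must be set accordingly, and the lattice norm normalization ($q^{-n}$ versus $q^{n}$) matters. With the roles chosen as the convention dictates, the result becomes $\sup\{q^t|x|,|y|\}$ up to a constant factor. I will fix the convention so that $t\to0$ recovers $\|\cdot\|_{L_0}=\sup\{|x|,|y|\}$ and $t\to1$ recovers $\sup\{q|x|,|y|\}$, which is similar to $\|\cdot\|_{L_1}$. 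Interpolating, $\|\cdot\|_{\e_t^+}$ is then similar to $\sup\{q^t|x|,|y|\}$. By Theorem \ref{buildintthm}(2), $\lambda(z)$ is the class of $(x,y)\mapsto|xz+y|$, so $\lambda(z)=\e_t^+$.

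\emph{Inert case.} Let $|z|=|z|_i=1$. If $|x|\neq|y|$, then $|zx+y|=\sup\{|x|,|y|\}$ by the ultrametric inequality. If $|x|=|y|\neq0$, then $|zx+y|=|x|\,|z+y/x|\ge|x|\,|z|_i=|x|$, and also $|zx+y|\le|x|$. So in all cases $|zx+y|=\sup\{|x|,|y|\}=\|(x,y)\|_{L_0}$, and hence $\lambda(z)=\v$.

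\emph{Fibers.} By the two cases above, $\{1<|z|<q\}\subseteq\lambda^{-1}(\e^\circ)$ and $\{|z|=|z|_i=1\}\subseteq\lambda^{-1}(\v)$. For the reverse inclusions, apply Theorem \ref{buildintthm}(4) to $\e$. It gives $\lambda^{-1}(\e^\circ)$ as an open annulus $q^{-(n+1)}<|z-x|<q^{-n}$. Since this annulus contains our annulus $1<|z|<q$, and annuli of this shape with centers taken mod the appropriate ideal are either equal or disjoint, the two must coincide. Alternatively, we can argue directly. Suppose $\lambda(z)=\e_t^+$ for some $z$. Evaluating the norm identity at $(1,0)$ and $(0,1)$ gives $|z|=q^t\cdot|1|$ up to the common similarity constant, and the ratio $\|(1,0)\|/\|(0,1)\|=q^t$ pins down $|z|=q^t$. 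The same evaluation for $\v$ gives $|z|=1$. Moreover, $\inf_{w}|z+w|=\inf_w\|(1,w)\|_{L_0}=1$, so $|z|_i=1$.
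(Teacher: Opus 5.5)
Your overall route is the paper's. You compute the lattice norms explicitly, use $q^t\notin|k_\infty^\times|$ to get a strict ultrametric equality, and treat the inert case via $|z|_i$. The inert case is correct. Your fiber arguments are also correct: evaluating at $(1,0)$ and $(0,1)$ and computing $\inf_w\|(1,w)\|_{L_0}=1$ works, as does the equal-or-disjoint property of the annuli in Theorem~\ref{buildintthm}(4). They are in fact more explicit than the paper's one-line appeal to (4).

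The gap is the identity $\|(x,y)\|_{\boldsymbol{e}_t^+}=\sup\{q^t|x|,|y|\}$. It is part of the statement, and you never derive it.

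\begin{itemize}
\item Your inclusions $L_1\supseteq L_0\supseteq T^{-1}L_1$ are wrong. Since $T^{-1}\mathcal{O}_\infty\subseteq\mathcal{O}_\infty$, one has $L_1\subseteq L_0\subseteq TL_1$. Your own formula $\|\cdot\|_{L_1}=\sup\{q|x|,|y|\}\geq\|\cdot\|_{L_0}$ already shows this.
\item So taking $L'=L_1$ in the definition is not allowed. It produces $q^t\|\cdot\|_{L_1}$, whose similarity class is the vertex $[L_1]$ for every $t$, not a point of $\boldsymbol{e}^\circ$.
\item There is no convention left to choose. Prescribing the limits as $t\to0$ and $t\to1$ does not determine the norm at intermediate $t$, so ``interpolating'' is not an argument.
\end{itemize}

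The fix is to use the representative of $[L_1]$ that the definition requires, namely $L'=TL_1=\mathcal{O}_\infty\times T\mathcal{O}_\infty$. It satisfies $T^{-1}L'=L_1\subseteq L_0\subseteq L'$. Then $\|(x,y)\|_{TL_1}=\sup\{|x|,q^{-1}|y|\}$, and
\[
\|(x,y)\|_{\boldsymbol{e}_t^+}=\sup\bigl\{|x|,|y|,q^t|x|,q^{t-1}|y|\bigr\}=\sup\{q^t|x|,|y|\},
\]
because $q^t|x|\geq|x|$ and $q^{t-1}|y|\leq|y|$ for $0<t<1$. With this in place, your ultrametric step gives $|zx+y|=\|(x,y)\|_{\boldsymbol{e}_t^+}$ exactly. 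Your evaluation at $(1,0)$ and $(0,1)$ then legitimately pins down $|z|=q^t$, and the rest of your argument goes through unchanged.
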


\begin{proof}
We have that $||(x,y)||_{L_0}=\sup\{|x|,|y|\}$ and \[||(x,y)||_{TL_1}=||(x,y)||_{\begin{pmatrix}
    1&0\\0&T^{-1}
\end{pmatrix}L_0}=||(x,T^{-1}y)||_{L_0}=\sup\{|x|,q^{-1}|y|\}.\] Let $0<t<1$. Since $L_1\subseteq L_0\subseteq TL_1$,  we have \[||(x,y)||_{\boldsymbol{e}_t^+}=\sup\{\sup\{|x|,|y|\},q^t\sup\{|x|,q^{-1}|y|\}\}.\]

For $|x|\geq|y|$, \[||(x,y)||_{\boldsymbol{e}_t^+}=\sup\{|x|,q^t|x|\}=q^t|x|=\sup\{q^t|x|,|y|\}.\] In the case $|x|<|y|$, $q|x|\leq|y|$ and then \[||(x,y)||_{\boldsymbol{e}_t^+}=\sup\{|y|,q^{t-1}|y|\}=|y|=\sup\{q^t|x|,|y|\}.\]

Now if $|z|=q^t$ we have that $q^t|x|\neq|y|$. Then $|zx+y|=\sup\{q^t|x|,|y|\}$, proving \eqref{normramified}. If follows $\lambda^{-1}([L_0])$ equals $\{z\in \Omega\mid |z|=|z|_i=1\}$ or $\{z\in \Omega\mid |z|=|z|_i=q\}$. To prove \eqref{norminert}, take $|z|=|z|_i=1$. Then if $|x|\neq|y|$, $|zx+y|=\sup\{|x|,|y|\}$. If $|x|=|y|\neq0$, $|zx+y|\leq\sup\{|x|,|y|\}$ and $|zx+y|=|x||z-y/x|\geq |x||z|_i=|x|$. In conclusion, $|zx+y|=||(x,y)||_{L_0}$.
\end{proof}

\subsection{Reduction of CM points}\label{S reduction of CM points}
The assignment $C\mapsto [L_C]$ induces a canonical identification between the intersection graph $\Tcal$ and $\Tcali$ which is compatible with the $\PGL_2(k_\infty)$-action on both sides. Denote by $\overline{\lambda}$ the induced map $\Gamma\backslash\Omega\to \Gamma\backslash\Tcali(\R)$. The set of vertices of $\Gamma\backslash\Tcali$ are in bijection with $\Gamma\backslash\mathcal{V}(\Tcali)\cong \Gamma\backslash \PGL_2(k_\infty)/\mathbb{K}$. For $n\geq0$, consider $g_n=\begin{pmatrix}
    T^n&0\\0&1
\end{pmatrix}\in\PGL_2(k_\infty)$. From \cite[Example 2.4.1]{serretrees}, one has the decomposition
\[\Gamma\backslash\PGL_2(k_\infty)=\bigsqcup_{n\geq0}\Gamma\backslash\Gamma g_n\K.\] Let $n\geq0$. the vertex $v_n$ of \eqref{line} corresponds to the $\Gamma$-orbit of $g_n\boldsymbol{v}$ in $\Gamma\backslash\Tcali$. Likewise, if we denote by $e_{n+1}$ the edge connecting $v_{n}$ with $v_{n+1}$, then $e_{n+1}$ corresponds to the $\Gamma$-orbit of $g_{n}\boldsymbol{e}$ in $\Gamma\backslash\Tcali$. In terms of $\Gamma\backslash\Tcal$ we obtain that a CM Drinfeld module $\phi\cong\phi_z$ reduces to a unique irreducible component $C_n$ of $\Gamma\backslash\overline{\Omega}$ if and only if and $\overline{\lambda}(z)=v_n$. Similarly, $\phi$ reduces to the intersection $c_{n+1}=C_n\cap C_{n+1}$ if and only if $\overline{\lambda}(z)\in e_{n+1}^\circ$.

In what follows $k_{\infty^2}=\F_{q^2}((T^{-1}))\subseteq\C$ denotes the unique unramified quadratic extension of $k_\infty$ and $k_{ram}\subseteq\C_\infty$ denotes a ramified quadratic extension of $k_\infty$. Fix $u\in \F_{q}\smallsetminus (\F_q)^2$ and $\i\in\F_{q^2}$ with $\i^2=u$. Then $k_{\infty^2}=k_\infty(\i)$ and $k_{ram}=k_\infty(\r)$ with $\r=\sqrt{T}$ or $\r=\sqrt{uT}$.

\begin{lemma}\label{imaginary1}
    Let $g=\begin{pmatrix}a&b\\c&d\end{pmatrix}\in \PGL_2(k_\infty)$. Then 
    \begin{enumerate}
        \item If $z\in k_{\infty^2}\smallsetminus k_\infty$ and  $z=x+y\i$ with $x,y\in k_\infty$ and $y\neq0$. Then $|z|_i=|y|$ and $|g z|_i=\frac{|\det(g)||z|_i}{|cz+d|^2}$.
        \item If $z\in k_{ram}\smallsetminus k_\infty$ and  $z=x+y\r$ with $x,y\in k_\infty$ and $y\neq0$. Then $|z|_i=|y\r|=q^{1/2}|y|$ and $|g z|_i=\frac{|\det(g)||z|_i}{|cz+d|^2}$.
    \end{enumerate} 
\end{lemma}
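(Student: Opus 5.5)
The plan is to treat both cases uniformly. In each case $z=x+yw$ with $w\in\{\i,\r\}$, and $w$ satisfies $w^2=\delta$ with $\delta\in\{u,T\ \text{or}\ uT\}$. We first compute $|z|_i$, and then derive the transformation rule from a conjugation identity.

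For the imaginary part we need $\min_{t\in k_\infty}|z-t|=\min_t|(x-t)+yw|$, and we aim to show $|a+bw|=\max\{|a|,|bw|\}$ for $a,b\in k_\infty$. In the unramified case, $|\i|=1$ and the residue of $\i$ lies outside $\F_q$. So if $|a|=|b|$, the quotient $a/b$ has residue in $\F_q$, and $|a/b+\i|=1$ because $\i\not\equiv -a/b$ modulo the maximal ideal of $\mathcal{O}_{k_{\infty^2}}$. In the ramified case, $|\r|=q^{1/2}$, so $|bw|\in q^{\Z+1/2}$ while $|a|\in q^{\Z}$; the two values are never equal and the ultrametric inequality gives equality. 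It follows that $|z-t|\ge|yw|$, with equality at $t=x$. This gives $|z|_i=|y|$ and $|z|_i=q^{1/2}|y|$ respectively.

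For the transformation rule we use the Galois conjugation $\bar z=x-yw$ over $k_\infty$. Since $g$ has entries in $k_\infty$, we have $\overline{gz}=g\bar z$. The standard identity
\[gz-g\bar z=\frac{\det(g)(z-\bar z)}{(cz+d)(c\bar z+d)}\]
holds for any representative of $g$. Moreover $gz-\overline{gz}=2\,\mathrm{Im}_w(gz)\,w$, and $z-\bar z=2yw$. Since $2$ is a unit, the first step shows that $|gz|_i=|gz-\overline{gz}|$ up to the same normalisation, namely $|v-\bar v|=|2y'w|=|y'w|$ for $v=x'+y'w$. In the unramified case this equals $|v|_i$, and in the ramified case it also equals $|v|_i$, because $|y'\r|$ is precisely the formula for $|v|_i$. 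Hence $|gz|_i=|\det g|\,|z|_i/(|cz+d|\,|c\bar z+d|)$.

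It remains to show that $|c\bar z+d|=|cz+d|$. The absolute value on $\C_\infty$ restricted to the quadratic extension $k_\infty(w)$ is the unique extension of $|\cdot|$ from the complete field $k_\infty$. It is therefore invariant under $\mathrm{Gal}(k_\infty(w)/k_\infty)$, and conjugation fixes $c,d$. This yields the formula. The result is independent of the representative of $g$ in $\PGL_2$: scaling $g$ by $\lambda$ multiplies $|\det g|$ by $|\lambda|^2$ and $|cz+d|^2$ by $|\lambda|^2$.

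The only point needing care is the equality case $|a|=|b|$ in the unramified case, which relies on $\i\bmod \mathfrak{m}$ not lying in $\F_q$. Everything else is formal.
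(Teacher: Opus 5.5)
Your proof is correct and follows essentially the same route as the paper. The paper also first gets $|z|_i$ from the identity $|(x-t)+yw|=\max\{|x-t|,|yw|\}$, citing Lemma \ref{distinguishedred} instead of your residue-field and value-group argument. It then rationalizes $gz=(x'+\det(g)y\i)/N_{k_{\infty^2}/k_\infty}(cz+d)$, which is the same computation as your identity for $gz-\overline{gz}$ combined with $|c\bar z+d|=|cz+d|$. Your direct check that $|\i|_i=1$, using that the residue of $\i$ is not in $\F_q$, makes explicit a hypothesis of Lemma \ref{distinguishedred} that the paper leaves implicit.
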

\begin{proof}
We only prove (1), and (2) is proven in an analogous way. We have \[|z|_i=\min_{w\in k_\infty}|(x-w)+y\i|=\min_{x'\in k_\infty}\max\{|x'|,|y|\}=|y|,\] where the second equality follows from Lemma \ref{distinguishedred}. For the second claim, a direct computation shows that there exists $x'\in k_\infty$ such that \begin{equation}\label{imag}g z:=\frac{az+b}{cz+d}=\frac{x'+\det(\gamma)y\i}{N_{k_{\infty^2}/k_\infty}(cz+d)}\in k_{\infty^2}\smallsetminus k{_\infty}.\end{equation} and we conclude by using the first part. 
\end{proof}

\begin{lemma}\label{cmred}
Let $D$ be an imaginary discriminant and $z\in\Omega_D\cap\mathcal{F}$. If $D$ is an inert discriminant, $\mathrm{red}(\phi_{\Lambda_z})$ belongs to a unique irreducible component $C_n$ with $n\geq0$ and $|z|=q^n$. If $D$ is odd, $\mathrm{red}(\phi_{\Lambda_z})\in c_n$ with $n\geq0$ and and $|z|=q^{n+1/2}$. 
\end{lemma}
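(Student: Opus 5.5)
Since $z\in\mathcal{F}$, we have $|z|=|z|_i\geq 1$. The plan is to compute $|z|$ from the local behaviour of $K_D$ at $\infty$, and then to read off $\lambda(z)$ by translating $z$ into the position treated in Lemma \ref{distinguishedred} via the matrices $g_n$.

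\textbf{Inert case.} If $D$ is inert, then $K_D\otimes k_\infty\cong k_{\infty^2}$, so $z=x+y\i$ with $x,y\in k_\infty$. By Lemma \ref{imaginary1}(1), $|z|_i=|y|$, and $|y|$ lies in the value group $q^{\Z}$ of $k_\infty$. Combined with $|z|=|z|_i\geq1$, this gives $|z|=q^n$ for some $n\geq0$.

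Next put $w=g_n^{-1}z=T^{-n}z$ (modulo scalars, so $g_n^{-1}$ acts as $z\mapsto T^{-n}z$). Then $|w|=1$, and by Lemma \ref{imaginary1}(1), $|w|_i=|\det g_n^{-1}|\,|z|_i=q^{-n}q^n=1$. Lemma \ref{distinguishedred} therefore gives $\lambda(w)=\v$. By $\PGL_2(k_\infty)$-equivariance of $\lambda$ (Theorem \ref{buildintthm}(2)), $\lambda(z)=g_n\v$, whose $\Gamma$-orbit is $v_n$. As recalled at the start of Section \ref{S reduction of CM points}, this means $\mathrm{red}(\phi_{\Lambda_z})\in C_n$. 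Uniqueness holds because $\overline{\lambda}(z)$ is a vertex rather than an interior point of an edge. By \eqref{building} and \eqref{buildingedge}, the reduction then avoids the intersection points, so it lies in exactly one component.

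\textbf{Odd case.} If $D$ has odd degree, then $K_D\otimes k_\infty$ is a ramified extension, which we may identify with $k_{ram}$. Write $z=x+y\r$. By Lemma \ref{imaginary1}(2), $|z|_i=q^{1/2}|y|\in q^{\Z+1/2}$, so $|z|=q^{n+1/2}$ with $n\geq0$, using again $|z|\geq1$. Put $w=T^{-n}z$, so that $|w|=q^{1/2}$. Lemma \ref{distinguishedred} with $t=1/2$ gives $\lambda(w)=\boldsymbol{e}^+_{1/2}\in\boldsymbol{e}^\circ$. Equivariance then gives $\lambda(z)\in g_n\boldsymbol{e}^\circ$, whose image in $\Gamma\backslash\Tcali$ is $e_{n+1}^\circ$. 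By \eqref{buildingedge}, the reduction lies in $c_{n+1}=C_n\cap C_{n+1}$. The statement's indexing "$c_n$" should be read accordingly, as $c_{n+1}$ with $n\geq0$, consistent with $|z|=q^{n+1/2}$.

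\textbf{Main obstacle.} The step needing most care is identifying the completion $K_D\otimes_k k_\infty$ with $k_{\infty^2}$, respectively $k_{ram}$, compatibly with the embedding $K_D\subseteq\C_\infty$. Only then can $z$ be written as $x+y\i$ or $x+y\r$.
\begin{itemize}
\item In the inert case, $\sqrt{D}=\alpha\,\i\,T^{\deg D/2}\cdot(\text{a $1$-unit})$ with $\alpha\in\F_q^\times$. A Hensel argument shows this lies in $k_\infty(\i)$.
\item In the odd case, $\sqrt{D}$ equals $\sqrt{T}$ or $\sqrt{uT}$ times an element of $k_\infty$, depending on whether the leading coefficient of $D$ is a square.
\end{itemize}
Since $z=(-b\pm\sqrt{D})/2a$, the claimed form of $z$ follows. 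The remaining work is a routine check that the orbit of $g_n$ matches the labelling $v_n$, $e_{n+1}$ fixed earlier.
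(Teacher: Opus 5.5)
Your proposal is correct and follows the paper's proof: identify $K_D\otimes k_\infty$ with $k_{\infty^2}$ or $k_{ram}$, use Lemma \ref{imaginary1} to get $|z|=q^n$ or $|z|=q^{n+1/2}$, rescale by $T^{-n}$ to land in the situation of Lemma \ref{distinguishedred}, and transport back via equivariance of $\lambda$. Your extra details (value group, the Hensel argument, uniqueness via \eqref{building}) fill in steps the paper leaves implicit, and your reading of the ramified conclusion as $c_{n+1}$ is right, since the paper's own proof ends with $\overline{\lambda}(z)=(e_{n+1})_{1/2}$.
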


\begin{proof}
First assume that $D$ is inert and then $K_D\otimes k_\infty=k_{\infty^2}$. If $|z|=q^n$, $|z/T^n|=|z/T^n|_i=1$ and by Lemma \ref{distinguishedred}, $\lambda(z)=g_n\lambda(z/T^n)=g_n\boldsymbol{v}$. Then $\mathrm{red}(\phi_z)=\overline{\lambda}(z)=v_n$. If $D$ is odd, $K_D\otimes k_\infty=k_{ram}$ and then $|z|=|z|_i=q^{1/2}|y|$ where $z=x+y\r$. By Lemma \ref{distinguishedred}, $\lambda(z)=g_n\lambda(z/T^n)=g_n\boldsymbol{e}$. From this $\mathrm{red}(\phi_z)=g_n\overline{\lambda}(z)=(e_{n+1})_{1/2}$.
\end{proof}

\subsection{Measures on $\Gamma\backslash\Tcali$}\label{measures}

The set of vertices $\Gamma\backslash\mathcal{V}(\Tcali)$ carries a natural measure arising from the uniformization by the double quotient $\Gamma\backslash \PGL_2(k_\infty)/\mathbb{K}$. A Haar measure $\rho$ on $\PGL_2(k_\infty)$, together with the counting measure $\lambda$ on $\Gamma$, induces a unique measure $\mu_{\Gamma\backslash \PGL_2(k_\infty)}$ on $\Gamma\backslash\PGL_2(k_\infty)$ such that \begin{equation}\label{unfolding}\int_{\Gamma\backslash \PGL_2(k_\infty)}\int_\Gamma f(\gamma g)d\lambda(\gamma)d\mu_{\Gamma\backslash \PGL_2(k_\infty)}(g)=\int_{\PGL_2(k_\infty)} f(g)d\rho(g),\end{equation} for all $f\in C_c(\PGL_2(k_\infty))$. See \cite[\S 2 Proposition 4]{Bourbaki04}. 

\begin{lemma}
    If $\rho$ is normalized such that $\rho(\K)=1$, $\mu_{\Gamma\backslash \PGL_2(k_\infty)}(\Gamma\backslash\Gamma g_n\K)=\left|\Gamma\cap g_n\K g_n^{-1}\right|^{-1}$.
\end{lemma}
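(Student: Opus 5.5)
We apply the unfolding identity \eqref{unfolding} to the indicator function of the open compact set $g_n\K$. Set $f=\mathbbm{1}_{g_n\K}$, and write $\Gamma_n:=\Gamma\cap g_n\K g_n^{-1}$.

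First, $\Gamma_n$ is finite. It is discrete, since $\Gamma=\PGL_2(A)$ is a discrete subgroup of $\PGL_2(k_\infty)$. It is also contained in the compact group $g_n\K g_n^{-1}$. Second, $f$ is locally constant with compact support, so it lies in $C_c(\PGL_2(k_\infty))$. Because $\rho$ is left invariant and $\rho(\K)=1$, the right-hand side of \eqref{unfolding} is
\[\rho(g_n\K)=\rho(\K)=1.\]

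Now compute the left-hand side. For $g\in\PGL_2(k_\infty)$, the inner integral $F(g):=\sum_{\gamma\in\Gamma}f(\gamma g)$ counts the $\gamma\in\Gamma$ with $\gamma g\in g_n\K$.

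If $\Gamma g\cap g_n\K=\emptyset$, that is $g\notin\Gamma g_n\K$, then $F(g)=0$.

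If $g\in\Gamma g_n\K$, translate so that $g=g_nk$ with $k\in\K$; this is allowed because $F$ is left $\Gamma$-invariant. Then $\gamma g_nk\in g_n\K$ if and only if $\gamma\in g_n\K g_n^{-1}$. So $F(g)=|\Gamma_n|$ for every $g\in\Gamma g_n\K$.

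Hence $F=|\Gamma_n|\cdot\mathbbm{1}_{\Gamma\backslash\Gamma g_n\K}$ as a function on $\Gamma\backslash\PGL_2(k_\infty)$, and \eqref{unfolding} becomes
\[|\Gamma_n|\,\mu_{\Gamma\backslash\PGL_2(k_\infty)}(\Gamma\backslash\Gamma g_n\K)=1.\]
Dividing by $|\Gamma_n|$ gives the claim.

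The only point needing care is whether $\Gamma=\PGL_2(A)$ is exactly what sits inside $\PGL_2(k_\infty)$ with the counting measure, and whether the orbit count above uses the same group. It does, because $F$ is computed directly inside $\PGL_2(k_\infty)$, where $\Gamma$ acts faithfully.

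Optionally, $|\Gamma_n|$ can be made explicit. For $n=0$, $\Gamma_0=\PGL_2(\F_q)$. For $n\ge1$, $\Gamma_n$ consists of the classes of $\begin{pmatrix}a&b\\0&d\end{pmatrix}$ with $a,d\in\F_q^\times$ and $\deg b\le n$, so $|\Gamma_n|=(q-1)q^{n+1}$. This is not needed for the lemma itself.
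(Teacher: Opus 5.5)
Your proposal is correct and follows the paper's proof: both apply the unfolding identity \eqref{unfolding} to the indicator of $g_n\K$ and observe that $\sum_{\gamma\in\Gamma}\mathbbm{1}_{g_n\K}(\gamma g)$ equals $|\Gamma\cap g_n\K g_n^{-1}|$ on $\Gamma g_n\K$ and $0$ elsewhere. The paper puts the factor $|\Gamma\cap g_n\K g_n^{-1}|^{-1}$ into the test function rather than dividing at the end, which is only a cosmetic difference. Your added checks (finiteness of $\Gamma_n$, $f\in C_c$, injectivity of $\PGL_2(A)\to\PGL_2(k_\infty)$) fill in details the paper leaves implicit.
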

\begin{proof}
We denote by $\mathbbm{1}_X$ the characteristic function of a set $X$. For $n\geq0$, consider the function $f_n=|\Gamma\cap g_n \K g_n^{-1}|^{-1}\mathbbm{1} _{g_n\K}\in C_c(G)$. We compute $\int_\Gamma f_n(\gamma g)d\lambda(\gamma)$ for $g\in G$. Suppose that $g\in \Gamma g_m\K$ for some $m\geq0$. Writing $g=\gamma' g_m k$ for some $\gamma'\in\Gamma$ and $k\in \K$, we obtain \[\int_\Gamma f_n(\gamma g)d\lambda(\gamma)=\frac{1}{|\Gamma\cap g_n \K g_n^{-1}|}\int_\Gamma \mathbbm{1}_{g_n\K}(\gamma g_mk )d\lambda(\gamma)=\frac{1}{|\Gamma\cap g_n \K g_n^{-1}|}\delta_{n=m}\int_{\Gamma\cap g_n\K g_n^{-1}}d\lambda(\gamma)=\mathbbm{1}_{\Gamma\backslash \Gamma g_n \K}.\] 

Therefore by (\ref{unfolding}) \begin{equation}\label{vol}\mu_{\Gamma\backslash G}(\Gamma\backslash \Gamma g_n\K)=\int_{\PGL_2(k_\infty)} f_n(g)d\rho(g)=\frac{\rho(g\K)}{|\Gamma\cap g_n \K g_n^{-1}|}=|\Gamma\cap g_n \K g_n^{-1}|^{-1}.\end{equation}    
\end{proof}

Let $\mu$ be the unique probability measure on $\Gamma\backslash\mathcal{V}(\Tcali)$ associated with the pushforward of $\mu_{\Gamma\backslash \PGL_2(k_\infty)}$ to $\Gamma\backslash \PGL_2(k_\infty)/\mathbb{K}$. Since $\Gamma\cap \K=\PGL_2(\F_q)$, and for $n\geq0$ the group $\Gamma\cap g_n\K g_n^{-1}$ equals the projective image in $\PGL_2(k_\infty)$ of $\begin{pmatrix}\F_q^\times&\F_q[t]_{\leq n}\\0&\F_q^\times\end{pmatrix}$, it follows that \begin{center}
    $$\mu(v_n)=\begin{cases}
        \dfrac{q-1}{2q}&\text{ if } n=0\\
        \dfrac{q^2-1}{2q^{n+1}}& \text{ if }n\ge1.
    \end{cases}$$
\end{center} 

Theorem \ref{thmA} and Theorem \ref{thmB} follows as a direct consequence of the following results, respectively.

\begin{customtheorem}{A'}[]\label{thmA'}
For every $\varepsilon>0$, $f\in L^2(\Gamma\backslash\mathcal{V}(\Tcali))$, and a sequence of inert discriminants $D$ as in Theorem \ref{thmA} \begin{equation}\label{testA}\left|\frac{1}{h(D)}\sum_{[\afrak]\in \mathrm{Cl}(D)}f\left(\overline{\lambda}(z_\afrak)\right)- \int_{\Gamma\backslash\mathcal{V}(\Tcali)} fd\mu\right|\ll_f|D|^{-1/4+\varepsilon}\quad\mbox{ as }|D|=q^{\deg D}\to\infty.\end{equation}   
\end{customtheorem} 

\begin{customtheorem}{B'}[]\label{thmB'}
Let $f\in L^2(\Gamma\backslash\mathcal{V}(\Tcali))$ be a function supported on vertices $v_n$ with $n$ even (resp. odd). For every sequence of inert discriminants $D$ as in Theorem \ref{thmB} with $\deg f$ even (resp. odd) and $\varepsilon>0$, \begin{equation}\label{testB}\left|\frac{1}{h(D)}\sum_{[\afrak]\in \mathrm{Cl}(D)}f\left(\overline{\lambda}(z_\afrak)\right)- 2\int_{\Gamma\backslash\mathcal{V}(\Tcali)} fd\mu\right|\ll_f|D|^{-1/4+\varepsilon}\quad\mbox{ as }|D|=q^{2\deg f}\to\infty.\end{equation}   
\end{customtheorem}

Indeed, by Lemma \ref{cmred} and the bijection $[\afrak]\mapsto \phi_{z_\mathfrak{a}}$ between $\mathrm{Cl}(D)$ and $\mathrm{CM}(D)$, we recover the limits in Theorems \ref{thmA} and \ref{thmB} as a special case when $f$ is the characteristic function supported on $v_n$ with $n\geq0$.

Similarly, if we uniformize the edges $\Gamma\backslash\mathcal{E}(\Tcali)$ by the double quotient $\Gamma\backslash\PGL_2(k_\infty)/\mathbb{I}$, we can normalize $\rho$ such that $\rho(\I)=1$ and this time $\mu_{\Gamma\backslash \PGL_2(k_\infty)}(\Gamma\backslash\Gamma g_n\mathbb{I})=\left|\Gamma\cap g_n\mathbb{I} g_n^{-1}\right|^{-1}$. Since \[\Gamma\cap g_n\mathbb{I}g_{n}^{-1}=\Gamma\cap g_n\Gamma_0(T^{-1})g_n^{-1}=\Gamma\cap g_n\K g_n^{-1}\cap g_{n+1}\K g_{n+1}^{-1},\] the pushforward of $\mu_{\Gamma\backslash \PGL_2(k_\infty)}$ to $\Gamma\backslash\PGL_2(k_\infty)/\mathbb{I}$ induces a unique probability measure $\nu$ on $\Gamma\backslash\mathcal{E}(\Tcali)$ given by

\begin{center}
    $$\nu(e_{n+1})=\dfrac{q-1}{q^{n+1}},\quad n\ge0$$
\end{center}

For $f,g\colon \Gamma\backslash\mathcal{E}(\Tcali)\to\C$, we put \[\langle f,g\rangle:=\int_{\Gamma\backslash\mathcal{E}(\Tcali)}f(e_n)\overline{g(e_n)}d\mu(e_n).\] 

In what follows, we identify the middle point of an edge with the edge itself, so that from now on, if $z\in k_{ram}\smallsetminus k_\infty$ we consider $\lambda(z)$ and $\overline{\lambda}(z)$ as elements in $\mathcal{E}(\Tcali)$ and $\mathcal{E}(\Gamma\backslash\Tcali)$ respectively. Note that for any $0<t<1$ with $t\neq1/2$, one would first need to consider an orientation of the edge if one wishes to identify it with a point at distance $t$ from one of its vertices. Having done this, the second claim in Theorem \ref{thmC} is a direct consequence of 

\begin{customtheorem}{C'}[]\label{thmC'}
For every $\varepsilon>0$, $f\in L^2(\Gamma\backslash\mathcal{E}(\Tcali))$ and a sequence of odd discriminants $D$ as in Theorem \ref{thmC} \begin{equation}\label{testC}\left|\frac{1}{h(D)}\sum_{[\afrak]\in \mathrm{Cl}(D)}f\left(\overline{\lambda}(z_\afrak)\right)- \int_{\Gamma\backslash\mathcal{E}(\Tcali)} fd\mu\right|\ll_f|D|^{-1/4+\varepsilon}\quad \mbox{ as }|D|=q^{\deg D}\to\infty.\end{equation}
\end{customtheorem} 

Again this follows from considering $f$ as the characteristic function of $e_{n+1}$, the bijection between $\mathrm{Cl}(D)$ and $\mathrm{CM}(D)$, and Lemma \ref{cmred}.

\section{Spectral decompositions}\label{sectionspectraldecomposition}

In this section, we introduce the adjacency operator on vertices and edges with the aim of studying the spectral decomposition of the induced operator on the $L^2$-spaces previously defined. In this way, we will reduce the proof of Theorems \ref{thmA'}, \ref{thmB'} and \ref{thmC'} to a particular family of funcions $f$. For the case of $L^2(\Gamma\backslash\mathcal{V}(\Tcali))$, the main references are \cite{efrat} and \cite[Section 2]{nagoshi}. The exposition for $L^2(\Gamma\backslash\mathcal{E}(\Tcali))$ parallel that of \cite{efrat}.

\subsection{Spectral decomposition on $L^2(\Gamma\backslash\mathcal{V})$}\label{S spectral decomposition vertices}

Let $\T$ be the adjacency operator on $\mathcal{V}(\Tcali)$ defined by \[\T(f)(v)=\sum_{v'\mbox{ is adjacent to }v}f(v').\] It descends to a self-adjoint operator on $\Gamma\backslash\mathcal{V}(\Tcali)$ given by \[\T(f)(v_n)=\begin{cases}(q+1)f(v_1),&\mbox{ if }n=0\\qf(v_{n-1})+f(v_{n+1}),&\mbox{ if }n\geq1.\end{cases}\]

On the quotient graph, the discrete spectrum of $\T$ consists of the eigenvalues $q+1$ and $-(q+1)$. The eigenfunctions corresponding to $q+1$ are the constant functions, and those corresponding to $-(q+1)$ are multiples of the alternating function $u_\mathrm{alt}(v_n):=(-1)^n$. The continuous spectrum is given by the interval $[-2\sqrt{q},2\sqrt{q}]$. To describe the corresponding eigenfunctions we introduce Eisenstein series. 

Let $\Gamma_\infty$ denote the subgroup of upper triangular matrices in $\Gamma$. For $(x,y)\in k_\infty^2$ and $g\in \GL_2(k_\infty)$ we define the right $\mathbb{K}$-invariant function $\psi_{\boldsymbol{v}}(g)=|\det(g)|||(0,1)g||_{L_0}^{-2}$. For $s\in\C$, we define the Eisenstein series \[E_{\v}(g,s)=\sum_{\gamma\in \Gamma_\infty\backslash \Gamma}\psi_{\v}(\gamma g)^s.\] For fixed $s$, the function $E_{\v}(g,s)$ is left $\Gamma$-invariant and right $\mathbb{K}$-invariant. Therefore, it defines a function $E_{\v}(v_n,s):=E_{\v}(g_n,s)$ on $\Gamma\backslash \mathcal{V}(\Tcali)$. For fixed $g$, $E_{\v}(g,s)$ is a rational function of $q^{-s}$. Moreover, it is holomorphic on $\mathrm{Re}(s)\geq 1/2$ except for simple poles at $s=1+k\frac{\pi i}{\log q}$ with $k\in\Z$.

The Eisenstein series satisfy \begin{equation}\label{eiseneigenvalue}\T(E_{\v}(\cdot,s))(g)=(q^s+q^{1-s})E_{\v}(g,s).\end{equation} Note that $q^s+q^{1-s}$ is real if and only if $s=1/2+it$ with $ t\in\R$ or $s=1+\frac{k\pi i}{\log(q)}$ with $k\in \Z$. In the first case, if $t=\frac{\theta}{\log q}$ we have $q^s+q^{1-s}=2\sqrt{q}\cos(\theta)$. 

\begin{lemma}\label{Eisenatzero} The value of
$E_{\v}(v,s)$ at the vertex $v_0$ is $\frac{(q+1)(1-q^{1-2s})}{1-q^{2(1-s)}}=(q+1)\frac{\zeta_A(2s-1)}{\zeta_A(2s)}$.
\end{lemma}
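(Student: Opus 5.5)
The plan is to evaluate the Eisenstein series directly as a lattice sum at $g_0=1$, first in the region of absolute convergence $\mathrm{Re}(s)>1$. The identity for general $s$ then follows because both sides are rational functions of $q^{-s}$.

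\emph{Step 1: rewrite the sum as a sum over coprime pairs.} By definition $E_{\v}(v_0,s)=E_{\v}(g_0,s)$ with $g_0=1$, so
\[E_{\v}(v_0,s)=\sum_{\gamma\in\Gamma_\infty\backslash\Gamma}\psi_{\v}(\gamma)^s.\]
For $\gamma=\begin{pmatrix}a&b\\c&d\end{pmatrix}\in\Gamma=\PGL_2(A)$, the determinant lies in $\F_q^\times$, so $|\det\gamma|=1$. Also $\|(0,1)\gamma\|_{L_0}=\|(c,d)\|_{L_0}=\max\{|c|,|d|\}$, hence $\psi_{\v}(\gamma)=\max\{|c|,|d|\}^{-2}$. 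The coset $\Gamma_\infty\gamma$ is determined by the bottom row $(c,d)$, which is a coprime pair, up to multiplication by $\F_q^\times$. Conversely, every coprime pair occurs as a bottom row. This uses that modding out by the center and by the diagonal part of $\Gamma_\infty$ leaves exactly the scaling by $\F_q^\times$. I will state this identification explicitly. It yields
\[E_{\v}(v_0,s)=\sum_{\substack{(c,d)\ \mathrm{coprime}\\ \bmod \F_q^\times}}\max\{|c|,|d|\}^{-2s}.\]

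\emph{Step 2: remove the coprimality condition.} Every nonzero pair $(c,d)\in A^2$ can be written uniquely as $g\cdot(c',d')$, where $g$ is monic and $(c',d')$ is coprime. Under this factorisation, $\F_q^\times$-classes correspond to $\F_q^\times$-classes. Hence the full sum $S(s)=\sum_{(c,d)\neq0 \bmod\F_q^\times}\max\{|c|,|d|\}^{-2s}$ factors as
\[S(s)=\zeta_A(2s)\,E_{\v}(v_0,s),\qquad \zeta_A(s)=\sum_{g\ \mathrm{monic}}|g|^{-s}=\frac{1}{1-q^{1-s}}.\]

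\emph{Step 3: compute the full sum by counting.} The number of nonzero pairs with $\max\{|c|,|d|\}=1$ is $q^2-1$. For $n\geq1$, the number of pairs with $\max\{|c|,|d|\}=q^n$ is $q^{2(n+1)}-q^{2n}$. Dividing by $q-1$ to account for the $\F_q^\times$-scaling gives
\[S(s)=(q+1)\Big(1+\sum_{n\ge1}q^{2n}q^{-2ns}\Big)=\frac{q+1}{1-q^{2(1-s)}}.\]

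\emph{Step 4: conclude.} Combining Steps 2 and 3,
\[E_{\v}(v_0,s)=\frac{(q+1)(1-q^{1-2s})}{1-q^{2(1-s)}}=(q+1)\frac{\zeta_A(2s-1)}{\zeta_A(2s)}.\]
This holds for $\mathrm{Re}(s)>1$, and it extends to all $s$ by the rationality of $E_{\v}(g,s)$ in $q^{-s}$.

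The only delicate point is the bookkeeping in Step 1, namely checking exactly which scalars are quotiented out in $\Gamma_\infty\backslash\PGL_2(A)$. The rest is a routine geometric series.
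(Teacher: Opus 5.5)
Your proof is correct and follows essentially the same route as the paper: you parametrize $\Gamma_\infty\backslash\Gamma$ by coprime bottom rows modulo $\F_q^\times$, remove coprimality at the cost of a factor $\zeta_A(2s)$, and evaluate the full lattice sum by geometric series. The only difference is cosmetic. You count the pairs with $\max\{|c|,|d|\}=q^n$ directly as $q^{2n}(q^2-1)$, whereas the paper splits into $c=0$ and $c\neq 0$ and sums nested geometric series, so your Step 3 is a bit shorter.
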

\begin{proof}
We need to compute \[E_{\v}(v_0,s)=\frac{1}{q-1}\sum_{(c,d)=A}\max\{|c|,|d|\}^{-2s}.\]

Denote by $A_+$ the set of monic polynomials in $A$. It suffices to compute \[E^\star_{\v}(v_0,s):=\sum_{(0,0)\neq(c,d)}\max\{|c|,|d|\}^{-2s},\]
as we can see from the identity
 \begin{equation}\label{atzero}
    E^\star_{\v}(v_0,s)=\sum_{f\in A_+}\sum_{(0)\neq(c,d)=fA}\max\{|c|,|d|\}^{-2s}=\zeta_A(2s)(q-1)E_{\v}(v_0,s)\end{equation}
We decompose the sum as \begin{equation}\label{sum1}
    E^\star_{\v}(v_0,s)=\sum_{0\neq d}|d|^{-2s}+\sum_{c\neq 0,d\in A}\max\{|c|,|d|\}^{-2s}.\end{equation}

For the first sum in \eqref{sum1} we have \begin{equation}\label{sum11}\sum_{0\neq d}|d|^{-2s}=(q-1)\zeta_A(2s).\end{equation}

We split the second sum in \eqref{sum1} as \begin{equation}\label{sum2}
    \sum_{c\neq 0,d\in A}\max\{|c|,|d|\}^{-2s}=\sum_{n\geq0}\sum_{\deg c=n}\left(\sum_{\deg d\leq n}(q^{n})^{-2s}+\sum_{\deg d\geq n+1}|d|^{-2s}\right).\end{equation}

Computing the two summands in the innermost part of \eqref{sum2} separately, we obtain \[\sum_{\deg d\leq n}(q^{n})^{-2s}=q^{n+1}(q^{n})^{-2s}=q(q^{1-2s})^n,\] and \[\sum_{\deg d\geq n+1}|d|^{-2s}=\sum_{m\geq n+1}\sum_{\deg d=m}(q^m)^{-2s}=\sum_{m\geq n+1}(q-1)(q^{1-2s})^m=(q-1)(q^{1-2s})^{n+1}\frac{1}{1-q^{1-2s}}.\]

Therefore, the second summand in \eqref{sum1} becomes \begin{equation}\label{sum12}\sum_{c\neq 0,d\in A}\max\{|c|,|d|\}^{-2s}=\sum_{n\geq0}(q-1)(q^{2-2s})^n\left(\frac{q(1-q^{-2s})}{1-q^{1-2s}}\right)=\frac{q(q-1)(1-q^{-2s})}{(1-q^{1-2s})(1-q^{2-2s})}.\end{equation}

Adding \eqref{sum11} and \eqref{sum12}, we obtain

\[E^\star(v_0,s)=\frac{q-1}{1-q^{1-2s}}+\frac{q(q-1)(1-q^{-2s})}{(1-q^{1-2s})(1-q^{2-2s})}=\frac{q^2-1}{1-q^{2(1-s)}}.\]

We conclude from \eqref{atzero} that \[E_{\v}(v_0,s)=\frac{(q+1)(1-q^{1-2s})}{1-q^{2(1-s)}}.\]

\end{proof}

Let $u_{\mathrm{cst}}$ be the constant function equal to $1$. 

\begin{theorem}\label{spectraldec}
    The spectral resolution for $f\in L^2(\Gamma\backslash\mathcal{V}(\Tcali))$ reads \[f(v_n)=\langle f,u_{\mathrm{cst}}\rangle u_\mathrm{cst}(n)+\langle f,u_\mathrm{alt}\rangle u_\mathrm{alt}(n)+\frac{4q}{(q^2-1)}\int_{0}^{\pi}\langle f,E_{\v}(\cdot,\frac{1}{2}+i\frac{\theta}{\log q})\rangle E_{\v}(v_n,\frac{1}{2}+i\frac{\theta}{\log q})d\theta.\]
\end{theorem}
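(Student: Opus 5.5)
We treat the statement as the spectral theorem for the self-adjoint (indeed bounded, norm $\le q+1$) operator $\T$ on the weighted space $L^2(\mu)$ of the ray, computed explicitly. We solve the eigenvalue recursion, identify the spectral measure and its density, and then match the resulting generalized eigenfunctions with $E_{\v}(\cdot,s)$.

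\textbf{Step 1: generalized eigenfunctions.} The recursion $\T u=\lambda u$, i.e.\ $(q+1)u(v_1)=\lambda u(v_0)$ and $qu(v_{n-1})+u(v_{n+1})=\lambda u(v_n)$ for $n\ge1$, determines $u$ uniquely from $u(v_0)$. Writing $\lambda=q^s+q^{1-s}$, the characteristic roots for $n\ge1$ are $q^s$ and $q^{1-s}$. Hence the solution normalized so that $u(v_0)=E_{\v}(v_0,s)$ has the form
\[u(v_n)=a(s)q^{ns}+b(s)q^{n(1-s)}.\]
By \eqref{eiseneigenvalue}, $E_{\v}(\cdot,s)$ solves the same recursion. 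So by uniqueness $E_{\v}(v_n,s)=u(v_n)$, and Lemma \ref{Eisenatzero} fixes the constants; equivalently, one computes the constant term $q^{ns}+c(s)q^{n(1-s)}$ with $c(s)=\zeta_A(2s-1)/\zeta_A(2s)$ up to normalization.

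\textbf{Step 2: spectral measure.} For the spectral measure we use $\delta_{v_0}$, which is cyclic for $\T$ since the recursion generates every $v_n$. We compute the resolvent $\langle(\T-\lambda)^{-1}\delta_{v_0},\delta_{v_0}\rangle$ for $\lambda\notin[-2\sqrt q,2\sqrt q]\cup\{\pm(q+1)\}$. For $|q^{s}|<\sqrt q$ (i.e.\ $\mathrm{Re}\, s<1/2$), the solution decaying in $L^2(\mu)$ uses the root $q^{ns}$, because $\mu(v_n)\asymp q^{-n}$. This gives the resolvent as an explicit rational function in $q^{-s}$; its poles give the atoms and its jump across the interval gives the density. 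The atoms occur at $\lambda=\pm(q+1)$, where $u_{\mathrm{cst}}$ and $u_\mathrm{alt}$ are genuinely in $L^2(\mu)$ and $\mu$ is a probability measure, so the discrete projections are $\langle f,u_{\mathrm{cst}}\rangle u_{\mathrm{cst}}$ and $\langle f,u_\mathrm{alt}\rangle u_\mathrm{alt}$. One also checks $\|u_\mathrm{alt}\|=1$. No other eigenvalues appear, since for $\lambda$ outside $[-2\sqrt q,2\sqrt q]$ the decaying solution fails the boundary condition at $v_0$ except at $\pm(q+1)$. On the interval $\lambda=2\sqrt q\cos\theta$ with $\theta\in[0,\pi]$, the density is computed by Stieltjes inversion.

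\textbf{Step 3: assembling the formula.} By general theory on a cyclic vector for a Jacobi operator, the continuous part is
\[\int\langle f,u_\lambda\rangle u_\lambda\,d\sigma(\lambda),\]
where $u_\lambda$ is normalized by $u_\lambda(v_0)=1$. We rewrite this with $u_\lambda=E_{\v}(\cdot,s)/E_{\v}(v_0,s)$ and $d\lambda=2\sqrt q\sin\theta\,d\theta$. The key identity is
\[|E_{\v}(v_0,\tfrac12+i\tfrac{\theta}{\log q})|^{-2}\cdot(\text{density})\cdot 2\sqrt q\sin\theta=\frac{4q}{q^2-1}.\]
To verify it, use $|c(s)|=1$ on the critical line, which follows from Lemma \ref{Eisenatzero}, since $E_{\v}(v_0,s)$ is then a modulus of $1-q^{-1+2i\theta'}$-type expressions. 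Both sides then reduce to $|1-q^{-1}e^{2i\theta}|^{-2}$-type expressions, which cancel.

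\textbf{Main obstacle.} The main difficulty is this constant bookkeeping: getting $\frac{4q}{q^2-1}$ exactly, including the measure weights $\mu(v_0)=\frac{q-1}{2q}$, which differ from the pattern at $n\ge1$, and the normalization of the inner product. I would first verify the identity numerically by checking the formula for $f=\delta_{v_0}$, i.e.\ that
\[1=\mu(v_0)\bigl(1+1\bigr)+\frac{4q}{q^2-1}\mu(v_0)\int_0^\pi|E_{\v}(v_0,\cdot)|^2\,d\theta.\]
This integral can be done by residues in $e^{i\theta}$. Completeness on all of $L^2$ then follows from cyclicity of $\delta_{v_0}$.
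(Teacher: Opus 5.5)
\textbf{There is a genuine gap at Step 3, and the statement's constant is itself wrong.}

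\textbf{Your route versus the paper's.} The paper derives nothing here. It quotes Efrat's Theorem 5.3 (eigenfunctions normalized by $f_{\T,\lambda}(v_0)=q+1$, density $\frac{d\theta}{(q-1)^2+4q\sin^2\theta}$) and rescales to $E_{\v}$ via Lemma~\ref{Eisenatzero}. Your self-contained Weyl-function computation is a legitimate replacement for that citation. Steps 1--2 are correct: $\delta_{v_0}$ is cyclic, the decaying root is $q^{ns}$ for $\mathrm{Re}\,s<\frac12$, and only $\pm(q+1)$ survive the boundary condition.

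\textbf{The failing step.} In Step 3 you assert the ``key identity'' rather than compute it, and it is false. Carrying out your plan: for $z=q^s+q^{1-s}$ with $\mathrm{Re}\,s<\frac12$, the boundary condition at $v_0$ gives
\[
\langle(\T-z)^{-1}\delta_{v_0},\delta_{v_0}\rangle=\frac{\mu(v_0)}{q\,(q^{s}-q^{-s})}.
\]
This has atoms of mass $\mu(v_0)^2$ at $\pm(q+1)$. Letting $q^s\to\sqrt q\,e^{-i\theta}$ gives the density $\sigma'(\lambda)=\frac{\mu(v_0)(q+1)\sin\theta}{\pi\sqrt q\,|1-qe^{-2i\theta}|^2}$.

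With $u_\lambda(v_0)=1$, the inversion formula is $f=\mu(v_0)^{-2}\int\langle f,u_\lambda\rangle u_\lambda\,d\sigma(\lambda)$. Your version omits the factor $\mu(v_0)^{-2}$, but that factor is exactly what makes the atoms produce $\langle f,u_{\mathrm{cst}}\rangle u_{\mathrm{cst}}$ and $\langle f,u_{\mathrm{alt}}\rangle u_{\mathrm{alt}}$. Using $|E_{\v}(v_0,s)|^2=\frac{4(q+1)^2\sin^2\theta}{|1-qe^{-2i\theta}|^2}$, the $\theta$-dependence does cancel, as you say. But what remains is
\[
\frac{1}{\mu(v_0)}\cdot\frac{1}{2\pi(q+1)}=\frac{q}{\pi(q^2-1)},\qquad\text{not}\qquad\frac{4q}{q^2-1}.
\]

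\textbf{Your own check confirms this.} Write $E_{\v}(v_0,s)=1+c(s)$ with $c(s)=q\frac{1-q^{-2s}}{1-q^{2-2s}}$. This, not $\zeta_A(2s-1)/\zeta_A(2s)$, is the coefficient with $|c|=1$ on $\mathrm{Re}\,s=\frac12$, and its mean over $\theta\in[0,\pi]$ is $1/q$. Hence
\[
\int_0^\pi|E_{\v}(v_0,\tfrac12+\tfrac{i\theta}{\log q})|^2d\theta=\frac{2\pi(q+1)}{q}.
\]
Parseval for $\delta_{v_0}$ then reads $1=\frac{q-1}{q}+C\,\frac{\pi(q^2-1)}{q^2}$, which forces $C=\frac{q}{\pi(q^2-1)}$. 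With $C=\frac{4q}{q^2-1}$ the right-hand side is $\frac{q-1+4\pi}{q}\neq 1$.

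\textbf{Consequence.} No argument can prove the statement with the constant as printed; the factor $4\pi$ is an error in the paper. Two slips account for it:
\begin{itemize}
\item The intermediate formula attributed to Efrat, read with the $\mu$-inner product as written, already fails the same Parseval test by a factor $\pi$.
\item $\frac{1-qe^{-2i\theta}}{1-e^{-2i\theta}}$ equals $\frac12\bigl((q+1)+i(q-1)\cot\theta\bigr)$, not $(q+1)+i(q-1)\cot\theta$. This accounts for the factor $4$.
\end{itemize}
Completed correctly, your method proves the resolution with $\frac{q}{\pi(q^2-1)}$ in place of $\frac{4q}{q^2-1}$. The later sections use only the shape of the decomposition, so nothing downstream is affected.
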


\begin{proof}
This is a restatement of the main result in \cite{efrat}. One should keep in mind that the reference works with the measure $\frac{2q}{q^2-1}\mu$. By Proposition 3.1 of \cite{efrat}, there exists a unique function $f_{\T,\lambda}$ on $\Gamma\backslash\mathcal{V}(\Tcali)$ that is an eigenfunction for $\T$ with eigenvalue $\lambda$ and $f_{\T,\lambda}(v_0)=q+1$. If we set $\widehat{f_{\T,\theta}}=i\sin(\theta)f_{\T,2\sqrt{q}\cos(\theta)}$, Theorem 5.3 of \cite{efrat} yields the spectral resolution \[f(v_n)=\langle f,u_{\mathrm{cte}}\rangle u_{\mathrm{cte}}(v_n)+\langle f,u_{\mathrm{alt}}\rangle u_{\mathrm{alt}}(v_n)+\frac{4q}{q^2-1}\int_{0}^{\pi}\langle f,\widehat{f_{\T,\theta}}\rangle \widehat{f_{\T,\theta}}(v_n)\frac{d\theta}{(q-1)^2+4q\sin^2(\theta)}.\] At $s=\frac{1}{2}+\frac{\theta}{\log q}i$, we have $q^{1-s}=\sqrt{q}e^{-i\theta}$, $q^{1-2s}=e^{-2i\theta}$ and $q^s+q^{1-s}=2\sqrt{q}\cos(\theta)$. Then Lemma \ref{Eisenatzero} implies that 
\begin{equation}\label{eqfandEisen}\widehat{f_{\T,\theta}}(v_n)=\frac{i\sin(\theta)(1-qe^{-2i\theta})}{(1-e^{-2i\theta})} E_{\v}(v_n,\frac{1}{2}+\frac{\theta}{\log q}i).
\end{equation}
Using that $\frac{(1-qe^{-2i\theta})}{(1-e^{-2i\theta})}=(q+1)+i(q-1)\cot(\theta)$ and the trigonometric identity $(q+1)^2\sin^2(\theta)+(q-1)^2\cos^2(\theta)=(q-1)^2+4q\sin^2(\theta)$ one obtains the result.
\end{proof}

In view of the preceding theorem, in order to prove Theorems \ref{thmA'}, it suffices to establish the limit \eqref{testA} in the cases when $f$ is replaced by $u_{\mathrm{cte}}$, $u_{\mathrm{alt}}$ and $E_{\v}(\cdot,\frac{1}{2}+i\frac{\theta}{\log q})$, the first case being trivial. 

For a function $f$ as in Theorem \ref{thmB'}, the spectral decomposition in Theorem \ref{spectraldec} reads \[f(v_n)=2\int_{\Gamma\backslash\mathcal{V}(\mathcal{T})}fd\mu+\frac{4q}{(q^2-1)}\int_{0}^{\pi}\langle f,E_{\v}(\cdot,\frac{1}{2}+i\frac{\theta}{\log q})\rangle E_{\v}(v_n,\frac{1}{2}+i\frac{\theta}{\log q})d\theta,\] for $v_n$ in the support of $f$. Therefore, assuming the first claim in Theorem \ref{thmB} (see Proposition \ref{altaverage} below), Theorem \ref{thmB'} will be a consequence of establishing the limit \eqref{testB} with $f$ equal to $E_{\v}(\cdot,\frac{1}{2}+i\frac{\theta}{\log q})$.

We will immediately deal with the case $f=u_{\mathrm{alt}}$, while the treatment of Eisenstein series is postponed until Section \ref{sectionfundamentaldiscriminant}, after we have studied the spectral decomposition on edges in Section \ref{S spectral decomposition edges}.

\subsection{The case of the alternating function}\label{S alternatingfn}

A direct computation shows that $\int u_{\mathrm{alt}}(v_n)d{\mu}(v_n)=0$. Then, \eqref{testA} for the case $f=u_{\mathrm{alt}}$ is a consequence of the following proposition, which also proves the first claim in Theorem \ref{thmB}.

\begin{proposition}\label{altaverage} For $D$ as in Theorem \ref{thmA}, \[\#\{\phi\in CM(D):\mathrm{red}(\phi)=v_n\text{ with }n \text{ even}\}=\#\{\phi\in CM(D):\mathrm{red}(\phi)=v_n\text{ with }n \text{ odd}\}.\] In particular, $\frac{1}{h(D)}\sum_{\phi\in CM_D}u_{\mathrm{alt}}(\mathrm{red}(\phi))=0$. For $m\geq0$ and $D$ as in Theorem \ref{thmB}, the condition $\mathrm{CM}(D)\cap\mathrm{red}^{-1}(C_m)\neq\emptyset$ implies that $m$ and $\deg f$ have the same parity.
\end{proposition}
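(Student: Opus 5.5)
The plan is to convert the parity of the component index $n$ into the parity of the degree of the norm of the ideal class, and then show that this parity is a group homomorphism on $\mathrm{Cl}(D)$.

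\textbf{Step 1: $n$ in terms of the leading coefficient.} Take $z=z_\afrak\in\Omega_D\cap\mathcal{F}$ with $az^2+bz+c=0$ and $b^2-4ac=D$ (Lemma \ref{discriminant}). Write $z=\frac{-b+\sqrt{D}}{2a}=x+y\i$ with $y\in k_\infty$, $|y|=|\sqrt{D}|/|a|$. Since $2$ is a unit and $D$ is inert, Lemma \ref{imaginary1} gives $|z|_i=|y|=q^{\deg D/2-\deg a}$. Since $z\in\mathcal{F}$ we have $|z|=|z|_i$, so Lemma \ref{cmred} says $\mathrm{red}(\phi_z)=C_n$ with
\[n=\tfrac{\deg D}{2}-\deg a .\]
Hence the parity of $n$ is $\deg D/2-\deg a \bmod 2$.

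\textbf{Step 2: $\deg a$ and the norm.} A direct computation shows that the proper $\Ocali_D$-ideal $a(A+zA)=aA+(az)A$ has index $|a|$ in $\Ocali_D=\Lambda_{az}$ (the argument of Lemma \ref{discriminant}). Therefore the norm of the class of $A+zA$ is $a^{-1}$, up to the norm of a principal ideal.

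\textbf{Step 3: a parity character on $\mathrm{Cl}(D)$.} Because $\infty$ is inert in $K_D$, every $\alpha\in K_D^\times$ has $N(\alpha)=\alpha\bar\alpha$ equal to a norm from $k_{\infty^2}$, which has even degree. So the map
\[\chi\colon\mathrm{Cl}(D)\to\Z/2,\qquad [\afrak]\mapsto\deg N(\afrak)\bmod 2\]
is a well-defined homomorphism. By Steps 1 and 2, the parity of $n$ equals $\chi([\afrak])+\deg D/2$.

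\textbf{Step 4: the case $\deg D_K=0$ (Theorem \ref{thmB}).} Here $K=\F_{q^2}(T)$, so a prime $P$ of $A$ splits in $K$ exactly when $\deg P$ is even, and otherwise it is inert. Every class contains an ideal coprime to $f$. Such an ideal factors into primes above split $P$, each of norm $P$ of even degree, and primes above inert $P$, each of norm $P^2$. Thus $\chi\equiv 0$, and $n\equiv\deg D/2=\deg f \pmod 2$, which is the parity claim.

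\textbf{Step 5: the case $\deg D_K>0$ (Theorem \ref{thmA}).} It suffices to show $\chi$ is surjective, since then its two fibres have equal size $h(D)/2$, giving the equality of counts and the vanishing of the $u_{\mathrm{alt}}$-average. For this I need a prime $P\nmid f$ of odd degree that splits in $K_D$. Since $K_D/k$ is geometric, $K_D$ and $\F_{q^2}(T)$ are linearly disjoint, so the compositum $K_D\F_{q^2}(T)/k$ is Galois with group $\Z/2\times\Z/2$. Chebotarev density then supplies infinitely many primes whose Frobenius is trivial on $K_D$ and nontrivial on $\F_{q^2}(T)$, that is, primes that split in $K_D$ and have odd degree; pick one not dividing $f$. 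A prime $\mathfrak{P}$ of $\Ocali_D$ above $P$ is then invertible, hence proper, with $\chi([\mathfrak{P}])=1$.

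\textbf{Main obstacle.} I expect the main care to be in Step 2, namely tracking the index $[\Ocali_D:a(A+zA)]$ and units precisely, and in making sure Chebotarev applies to the non-maximal order by choosing $P$ coprime to the conductor.
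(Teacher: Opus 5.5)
Your proof is correct and follows the paper's skeleton. Both arguments read off $n=\deg D/2-\deg a$ from Lemmas \ref{imaginary1} and \ref{cmred}. Both use $[\Ocali_D:a\Lambda_z]=|a|$ to turn the parity of $\deg a$ into the parity of the degree of $[\afrak]$. Both then reduce the statement to the degree-parity homomorphism $\mathrm{Cl}(D)\to\Z/2\Z$ being trivial when $K=\F_{q^2}(T)$ and surjective when $\deg D_K>0$.

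The difference is in how these two facts are established. For surjectivity, the paper quotes Rosen's exact sequence $0\to\mathrm{Cl}^0(K)\to\mathrm{Cl}(\Ocali_K)\to\Z/2\Z\to0$ for square-free $D$; this ultimately rests on the existence of a degree-one divisor on $K$. It then transports the result to general $D$ through the degree-preserving surjection $\mathrm{Cl}(\Ocali_D)\to\mathrm{Cl}(\Ocali_K)$. You instead exhibit an explicit odd-degree class: a prime of $\Ocali_D$ above an odd-degree $P\nmid f$ that splits in $K_D$, found by Chebotarev in the biquadratic field $K_D\F_{q^2}(T)$. This is a heavier (though standard) input, but it works directly in the non-maximal order and needs no comparison map.

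In the constant-field case, the paper uses $\Ocali_D/\afrak\cong\F_{q^2}[T]/\afrak\F_{q^2}[T]$. As a ring isomorphism this needs $\afrak$ prime to the conductor, since for $\deg f>0$ one has $\Ocali_D\cap\F_{q^2}=\F_q$. Your use of class representatives coprime to $f$, together with the splitting of even-degree versus odd-degree primes in $\F_{q^2}(T)$, makes that step explicit.
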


\begin{proof} Let $\mathfrak{a}$ be a proper $\mathcal{O}_D$-ideal so that $\phi_{z_\afrak}\in \mathrm{CM}(D)$. By Lemma \ref{discriminant}, we may assume that $az^2+bz+c=0$ with $b^2-4ac=D$ and $z\in \mathcal{F}$. By Lemma \ref{imaginary1} we have $|z|_i=|D|^{1/2}|a|^{-1}$. Then Lemma \ref{cmred} implies that $\mathrm{red}(\phi_{z_\mathfrak{a}})=v_n$ with $n=(\deg D)/2-\deg a$. Recall from the proof of Lemma \ref{discriminant} that $\mathcal{O}_D=A[az]$, from which it follows that $q^{\deg(a\mathfrak{a})}=|A/aA|=q^{\deg a}$ and so $\deg(\mathfrak{a})=-\deg(a)$.

Assume that $K_D=\F_{q^2}(T)$ and recall that $\mathcal{O}_{\F_{q^2}(T)}=\F_{q^2}[T]$. Since for any proper $\mathcal{O}_D$-ideal $\mathfrak{a}$ one has that $\mathcal{O}_D/\mathfrak{a}\cong \F_{q^2}[T]/\mathfrak{a}\F_{q^2}[T]$ is a $\F_{q^2}$-vector space, we conclude that $\deg(\mathfrak{a})$ is always even. By the conclusion of the previous paragraph, if an element in $\mathrm{CM}(D)$ reduces to a vertex $v_n$, $n$ must have the same parity as $(\deg D)/2=\deg f$.

Now assume that $K_D\neq\F_{q^2}(T)$. We first focus on $D$ square-free, in which case we have $\mathcal{O}_D=\mathcal{O}_K$. From our assumptions on $D$, we have the short exact sequence \begin{equation}\label{exactseq}0\to \mathrm{Cl}^0(K)\to \mathrm{Cl}(D)\xrightarrow{\deg} \Z/2\Z\to 0,\end{equation} where $\mathrm{Cl}^0(K)$ denotes the group of divisor classes of degree zero of the function field $K$. This is Proposition 14.1(b) in \cite{rosen} as explained in the first two cases of Proposition 14.7 \textit{loc. cit}.

By the conclusion of the first paragraph in the proof and \eqref{exactseq}, half of the classes in $\mathrm{Cl}(\mathcal{O}_D)$ have even degree, and half have odd degree. This implies the result for $D$ square-free. For general $D$, the surjective map $\mathrm{Cl}(\mathcal{O}_D)\to\mathrm{Cl}(\mathcal{O}_K)$ preserves the degree and we can argue similarly.
\end{proof}

\subsection{Spectral decomposition on $ L^2(\Gamma\backslash\mathcal{E}(\Tcali))$}\label{S spectral decomposition edges}
We say that two edges $e$ and $e'$ are \textit{adjacent} if they share a vertex and $e\neq e'$.

Let $\mathbb{U}$ be the operator on $\mathcal{E}(\Tcali)$ defined by \[\mathbb{U}(f)(e)=\sum_{e'\mbox{ is adjacent to e}}f(e').\] It descends to an operator on $\Gamma\backslash\mathcal{E}(\Tcali)$ given by \[\mathbb{U}(f)(e_n)=\begin{cases}(2q-1)f(e_1)+f(e_2),&\mbox{ if }n=1\\qf(e_{n-1})+(q-1)f(e_n)+f(e_{n+1}),&\mbox{ if }n\geq2.\end{cases}\]

\begin{lemma}
    The operator $\mathbb{U}$ is self-adjoint.
\end{lemma}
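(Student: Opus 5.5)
Self-adjointness will be checked directly with respect to the inner product built from $\nu$. We need
\[
\langle \mathbb{U}f,g\rangle=\sum_{n\ge1}(\mathbb{U}f)(e_n)\overline{g(e_n)}\,\nu(e_n)
\]
to be symmetric in $f$ and $g$. Since $\nu(e_n)=(q-1)q^{-n}$, we may drop the common factor $(q-1)$ and use weights $w_n:=q^{-n}$. Writing $\mathbb{U}$ as a matrix $(U_{nm})$, so that $(\mathbb{U}f)(e_n)=\sum_m U_{nm}f(e_m)$, self-adjointness on $L^2(\nu)$ is equivalent to the detailed balance condition
\[
w_nU_{nm}=w_mU_{mn}\qquad\text{for all } n,m\ge1,
\]
together with the entries being real. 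The latter is clear from the explicit formula.

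We verify the condition from the formula for $\mathbb{U}$ preceding the statement.

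\textbf{Diagonal entries.} These are $U_{11}=2q-1$ and $U_{nn}=q-1$ for $n\ge2$. The condition is trivial for them.

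\textbf{Off-diagonal entries.} The only nonzero ones are $U_{n,n+1}=1$ for $n\ge1$ and $U_{n+1,n}=q$ for $n\ge1$. For $n=1$ the latter comes from the line $n+1\ge2$ of the formula, which gives $(\mathbb{U}f)(e_2)=qf(e_1)+\dots$. We then check
\[
w_nU_{n,n+1}=q^{-n}\cdot 1=q^{-(n+1)}\cdot q=w_{n+1}U_{n+1,n},
\]
which is exactly the required identity. Hence for finitely supported $f,g$ both sides of $\langle\mathbb{U}f,g\rangle=\langle f,\mathbb{U}g\rangle$ are the same finite sum $\sum_{n,m}w_nU_{nm}f(e_m)\overline{g(e_n)}$, rearranged.

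To pass to all of $L^2(\nu)$, we first show that $\mathbb{U}$ is bounded. By Cauchy--Schwarz applied to the three-term recurrence, $\|\mathbb{U}f\|^2\ll\sum_n w_n\bigl(q^2|f(e_{n-1})|^2+|f(e_n)|^2+|f(e_{n+1})|^2\bigr)$. Then $w_nq^2=q\,w_{n-1}$ and $w_n=q\,w_{n+1}$ give $\|\mathbb{U}f\|\ll_q\|f\|$. Finitely supported functions are dense, so symmetry extends by continuity. One can also cross-check via the uniformization $\Gamma\backslash\PGL_2(k_\infty)/\mathbb{I}$: the adjacency relation on $\mathcal{E}(\Tcali)$ is symmetric and $\Gamma$-invariant, and $\nu(e_n)$ is proportional to $|\mathrm{Stab}_\Gamma(e_n)|^{-1}$, so the descended operator is symmetric for this weighting.

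The main obstacle is bookkeeping at the boundary edge $e_1$, in particular the coefficient $2q-1$. It arises because the $q$ edges at $v_0$ other than the chosen lift are all $\Gamma$-equivalent to $e_1$, and together with the $q-1$ edges at $v_1$ in the $e_1$ class this gives $2q-1$. Since this is a diagonal entry, it does not affect symmetry, so only the off-diagonal check above matters.
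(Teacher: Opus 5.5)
Your proof is correct and is essentially the paper's argument. The paper verifies $\langle \mathbb{U}f,\overline{g}\rangle=\langle f,\mathbb{U}\overline{g}\rangle$ by expanding against the weights $q^{-n}$ and reindexing the $qf(e_{n-1})$ and $f(e_{n+1})$ terms, which is exactly your detailed-balance identity $q^{-n}\cdot 1=q^{-(n+1)}\cdot q$ written out; your boundedness estimate also justifies, for general $f,g\in L^2(\nu)$, the rearrangement of infinite sums that the paper leaves implicit.
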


\begin{proof}
\begin{align*}
    (q-1)^{-1}\langle \mathbb{U}f,\overline{g}\rangle&=\sum_{n\geq1}\frac{\mathbb{U}f(e_n)g(e_n)}{q^n}\\
    &=\frac{((2q-1)f(e_1)+f(e_2))g(e_1)}{q}+\sum_{n\geq2}\frac{(qf(e_{n-1})+(q-1)f(e_n)+f(e_{n+1}))g(e_n)}{q^n}\\
    &=\frac{((2q-1)f(e_1)+f(e_2))g(e_1)}{q}+\frac{qf(e_1)g(e_2)}{q^2}+\frac{qf(e_2)g(e_3)}{q^3}+\frac{(q-1)f(e_2)g(e_2)}{q^2}\\&+\sum_{n\geq4}\frac{qf(e_{n-1})g(e_n)}{q^n}+\sum_{n\geq3}\frac{(q-1)f(e_n)g(e_n)}{q^n}+\sum_{n\geq2}\frac{f(e_{n+1})g(e_n)}{q^n}\\
    &=\frac{\mathbb{U}(g)(e_1)f(e_1)}{q}+\frac{\mathbb{U}(g)(e_2)f(e_2)}{q^2}+\sum_{n\geq3}\frac{(g(e_{n+1})+(q-1)g(e_n)+qg(e_{n-1}))f(e_n)}{q^n}\\
    &=(q-1)^{-1}\langle f, \mathbb{U}\overline{g}\rangle
\end{align*}
\end{proof}

\begin{proposition}\label{Ueigenfns}
Let $\lambda\in \R$ and $f$ such that $\mathbb{U}f=\lambda f$.
\begin{enumerate}
    \item If $\lambda\neq (q-1)\pm\sqrt{q}$, $f$ is a multiple of the function \[f_{\mathbb{U},\lambda}(e_n)=\frac{2q}{x_1-x_2}(x_1^{n-1}(x_1-q)-x_2^{n-1}(x_2-q)),\] where $x_1,x_2$ are the two different roots of $x^2-(\lambda-(q-1))x+q=0$.
    \item If $\lambda=(q-1)\pm\sqrt{q}$, $f$ is a multiple of the function \[f_{\mathbb{U},\lambda}(e_n)=2q(\pm\sqrt{q})^{n-2}(n(\pm\sqrt{q}-q)+q).\]
\end{enumerate}

\end{proposition}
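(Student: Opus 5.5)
The plan is to use that an eigenfunction of $\mathbb{U}$ on $\Gamma\backslash\mathcal{E}(\Tcali)$ is determined by its value at $e_1$, and then solve the resulting linear recurrence explicitly. Write $a_n=f(e_n)$. The equation $\mathbb{U}f=\lambda f$ at $e_1$ gives
\[a_2=(\lambda-(2q-1))a_1,\]
and at $e_n$ with $n\geq2$ it gives
\[a_{n+1}=(\lambda-(q-1))a_n-qa_{n-1}.\]
By induction, every $a_n$ is a fixed multiple of $a_1$. So the $\lambda$-eigenspace has dimension at most one, and if $a_1=0$ then $f=0$. It therefore suffices to exhibit one nonzero solution with $a_1=2q$ and check that it agrees with the stated formula.

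To streamline the initial condition, I will extend the recurrence backwards to a fictitious $a_0$. Comparing $qa_0+(q-1)a_1+a_2=\lambda a_1$ with $(2q-1)a_1+a_2=\lambda a_1$ shows that the $e_1$-equation is exactly the general recurrence at $n=1$ with the boundary condition $a_0=a_1$. So I look for solutions of the second order recurrence $a_{n+1}=(\lambda-(q-1))a_n-qa_{n-1}$ for $n\geq1$, whose characteristic polynomial is $x^2-(\lambda-(q-1))x+q$, subject to $a_0=a_1=2q$.

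\emph{Distinct roots.} When $x_1\neq x_2$, the general solution is $\alpha x_1^{n}+\beta x_2^{n}$. One checks directly that the proposed function $\frac{2q}{x_1-x_2}\bigl(x_1^{n-1}(x_1-q)-x_2^{n-1}(x_2-q)\bigr)$ has this form. At $n=1$ it equals $2q$. At $n=0$, using $x_1x_2=q$, it equals
\[\frac{2q}{x_1-x_2}\cdot(-q)\Bigl(\frac1{x_1}-\frac1{x_2}\Bigr)=\frac{2q^2}{x_1x_2}=2q.\]
As a sanity check, at $n=2$ it equals $2q(x_1+x_2-q)=2q(\lambda-2q+1)$, matching the $e_1$-equation.

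\emph{Double root.} The roots coincide exactly when $(\lambda-(q-1))^2=4q$, and then $x_1=x_2=x=\pm\sqrt q$. I will point out that this corresponds to $\lambda=(q-1)\pm2\sqrt q$, so the factor in item (2) should be checked against this value. In this case the general solution is $(\alpha+\beta n)x^n$. The function $2q\,x^{n-2}(n(x-q)+q)$ has this form, gives $2q$ at $n=1$, and gives $2q\,x^{-2}q=2q$ at $n=0$ since $x^2=q$. Alternatively, it is the limit of the formula in (1) as $x_2\to x_1$.

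The only delicate point is bookkeeping: the boundary equation at $e_1$ differs from the bulk one. The trick $a_0=a_1$ removes that difference, and the degenerate case needs the correct double-root value of $\lambda$. Nothing here uses square-integrability, so the statement holds for all real $\lambda$ and arbitrary functions, as stated.
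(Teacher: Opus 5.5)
Your argument is correct and is essentially the paper's. Both reduce $\mathbb{U}f=\lambda f$ to the second-order recurrence with characteristic polynomial $x^2-(\lambda-(q-1))x+q$, normalize $f(e_1)=2q$, and split into the distinct-root and double-root cases. The paper derives the formula by diagonalizing (resp.\ Jordan-decomposing) the transfer matrix $\begin{pmatrix}\lambda-(q-1)&-q\\1&0\end{pmatrix}$. You instead verify the stated formula through the boundary condition $a_0=a_1$, which avoids computing matrix powers.

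You are right that the degenerate case is $\lambda=(q-1)\pm2\sqrt{q}$; the paper's own proof uses this value. The same correction applies to the exclusion in item (1).
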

\begin{proof}
The system $Uf=\lambda f$ give rise to a recurrence $u(n)=A^{n-1}u(1)$ where $u(n)=\begin{pmatrix}
    f(e_{n+1})\\f(e_n)
\end{pmatrix}$ and $A=\begin{pmatrix}
    \lambda-(q-1)&-q\\1&0
\end{pmatrix}$. If $f(e_1)=0$, $f\equiv0$ so we normalize $f(e_1)=2q$. The characteristic polynomial of $A$ is $x^2-(\lambda-(q-1))x+q=0$. Then, for $\lambda$ as in (1),  $A$ is diagonalizable. More explicitly $A=P\begin{pmatrix}
    x_1&0\\0&x_2
\end{pmatrix}P^{-1}$ with $P=\begin{pmatrix}
    x_1&x_2\\1&1
\end{pmatrix}$. From here, $f(e_n)$ is the second coordinate of \[\frac{1}{x_1-x_2}\begin{pmatrix}
    x_1 &x_2\\1&1
\end{pmatrix}\begin{pmatrix}
    x_1^{n-1} &0\\0&x_2^{n-1}
\end{pmatrix}\begin{pmatrix}
    1 &-x_2\\-1&x_1
\end{pmatrix}\begin{pmatrix}
    (\lambda-(2q-1))2q \\2q
\end{pmatrix}\]

which gives $f(e_n)=\frac{2q}{x_1-x_2}(((\lambda-(2q-1))(x_1^{n-1}-x_2^{n-1})-q(x_1^{n-2}-x_2^{n-2}))$. Using that $(\lambda-(2q-1))x_1-q=x_1^2-qx_1$ we get to the expression in (1).

In the case $\lambda=(q-1)\pm2\sqrt{q}$ we have a unique solution $x=\frac{\lambda-(q-1)}{2}$ and $A=PJP^{-1}$ with $J=\begin{pmatrix}
    x&1\\0&x
\end{pmatrix}$ and $P=\begin{pmatrix}
    x&1\\1&0
\end{pmatrix}$. Thus, $f(e_n)$ is the second coordinate of \[\begin{pmatrix}
    x&1\\1&0
\end{pmatrix}\begin{pmatrix}
    x^{n-1}&(n-1)x^{n-2}\\0&x^{n-1}
\end{pmatrix}\begin{pmatrix}
    0&1\\1&-x
\end{pmatrix}\begin{pmatrix}
    (\lambda-(2q-1))2q\\2q
\end{pmatrix}\] which equals $2q((2x-q)(n-1)x^{n-2}+(2-n)x^{n-1})=2qx^{n-2}(n(x-q)+q)$. So \[f(e_n)=2q(\pm\sqrt{q})^{n-2}(n(\pm\sqrt{q}-q)+q)\]
\end{proof}

\begin{proposition}
For $\lambda\in\R$, the function $f_{\mathbb{U},\lambda}$ is in $ L^2(\Gamma\backslash\mathcal{E}(\Tcali))$ if and only if $\lambda=2q$ in which case $f_\lambda$ is constant. 
\end{proposition}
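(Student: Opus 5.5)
Since $\nu(e_n)=(q-1)q^{-n}$, a function $f$ on $\Gamma\backslash\mathcal{E}(\Tcali)$ lies in $L^2$ exactly when $\sum_{n\geq1}|f(e_n)|^2q^{-n}<\infty$. By Proposition \ref{Ueigenfns}, every real eigenvalue $\lambda$ has a one-dimensional eigenspace spanned by $f_{\mathbb{U},\lambda}$. So the plan is to read off the growth of $f_{\mathbb{U},\lambda}(e_n)$ in $n$ from its explicit formula and compare it with the weight $q^{-n}$.

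\emph{Case (1), $x_1\neq x_2$.} The roots satisfy $x_1x_2=q$ and $x_1+x_2=\lambda-(q-1)$.

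First suppose the discriminant $(\lambda-(q-1))^2-4q$ is negative. Then $x_2=\overline{x_1}$ and $|x_1|=|x_2|=\sqrt q$. Write $x_1=\sqrt q e^{i\alpha}$ with $\alpha\notin\pi\Z$. Then $f(e_n)$ equals $q^{(n-1)/2}$ times a bounded trigonometric expression $\mathrm{Im}(e^{i(n-1)\alpha}(x_1-q))/\sin\alpha$, up to a constant. This expression does not tend to zero. If it did, then, since $x_1-q\neq0$ and the coefficient of $e^{i(n-1)\alpha}$ is fixed, two consecutive terms vanishing in the limit would force $x_1-q=0$. Hence $|f(e_n)|^2q^{-n}\asymp q^{-1}c_n$ with $c_n\not\to0$, the series diverges, and $f\notin L^2$.

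Next suppose the discriminant is positive, so the roots are real and distinct. Order them so that $|x_1|>\sqrt q>|x_2|$. If $x_1\neq q$, the term $x_1^{n-1}(x_1-q)$ dominates and $|f(e_n)|^2q^{-n}\sim c\,(x_1^2/q)^n$ with $x_1^2/q>1$, so the series diverges. If $x_1=q$, then $x_2=1$, $\lambda=2q$, and the formula gives $f(e_n)=\frac{2q}{q-1}\cdot(q-1)=2q$, which is constant and therefore in $L^2$ because $\nu$ is finite. The remaining possibility $x_2=q$ would force $|x_2|>\sqrt q$, contradicting the ordering.

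\emph{Case (2), the double root $x=\pm\sqrt q$.} Here $f(e_n)=2q(\pm\sqrt q)^{n-2}(n(\pm\sqrt q-q)+q)$, so $|f(e_n)|^2q^{-n}\asymp n^2$ because $\pm\sqrt q-q\neq0$. The series diverges.

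The only delicate step is the complex-root case. There the weight exactly cancels the growth $q^{n/2}$, and one must check that the oscillating factor does not decay, which is the nonvanishing argument given above. Finally, for $\lambda=2q$ one can verify directly that $\mathbb{U}$ fixes constants: $(2q-1)+1=2q$ and $q+(q-1)+1=2q$.
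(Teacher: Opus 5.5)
Your proof is correct and follows essentially the same route as the paper: you split into cases by the discriminant of $x^2-(\lambda-(q-1))x+q$; for real roots the dominant root is forced to equal $q$ (so $\lambda=2q$); and for complex roots of modulus $\sqrt q$ you show the oscillating factor does not tend to zero. Your two-consecutive-terms argument in the complex case and your explicit $n^2$ growth in the double-root case are, if anything, slightly more careful than the paper's corresponding steps.
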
\begin{proof}
    Observe that for $f\colon \Gamma\backslash\mathcal{E}(\Tcali)\to\C$ to be square-integrable, it is necessary that $f(e_n)=o(q^{n/2})$. Suppose that $|q-1-\lambda|>2\sqrt{q}$ in which case $x_1$ and $x_2$ are real. Without loss of generality we assume $|x_1|>\sqrt{q}>|x_2|$. Since $x_2^{n-1}(x_2-q)=o(q^{n/2})$ and $|x_1|>q^{1/2}$, we must have $x_1=q$ for $f_{\mathbb{U},\lambda}(e_n)$ to be $o(q^{n/2})$. This implies that $x_2=1$, $\lambda-(q-1)=q+1$ and then $\lambda=2q$. The same reasoning shows that for $\lambda=(q-1)\pm2\sqrt{q}$, $f_{\mathbb{U},\lambda}$ is also not square-integrable.
    
    Finally, we assume that $|q-1-\lambda|<2\sqrt{q}$. This time, we have complex roots and we can write $x_2=\overline{x_1}$. Since $x_1x_2=q$ we can assume $x_1=\sqrt{q}e^{i\theta}$ for some $0<\theta<\pi$. Then \begin{align*}\frac{x_1-x_2}{4q}f_{\mathbb{U},\lambda}(e_n)=i\mathrm{Im}(x_1^{n-1}(x_1-q))&=iq^{\frac{n-1}{2}}\mathrm{Im}(e^{i(n-1)\theta}(\sqrt{q}e^{i\theta})-q)\\&=iq^{\frac{n-1}{2}}\mathrm{Im}(\sqrt{q}e^{in\theta}-qe^{i(n-1)\theta})\\&=iq^{\frac{n}{2}}(\sin(n\theta)-\sqrt{q}\sin((n-1)\theta))\end{align*}

The expression $\sqrt{q}\sin(n\theta)-q\sin((n-1)\theta)=\sqrt{q}\cos(n-1)\sin(\theta)+\sin((n-1)\theta)(\sqrt{q}\cos(\theta)-q)$ is the inner product between the constant vector $(\sqrt{q}\sin(\theta),\sqrt{q}\cos(\theta)-q)$ with $(\cos((n-1)\theta),\sin((n-1)\theta))$. Since the angle between these two vectors oscillates with $n$, we see that $(\sqrt{q}\sin(n\theta)-q\sin((n-1)\theta))$ does not converges to zero. In particular, $f_{\mathbb{U},\lambda}(e_n)$ is not $o(q^{n/2})$.
\end{proof}

For $0<\theta<\pi$, define \[\widehat{f}_{\mathbb{U},\theta}(e_n)=\frac{x_1-x_2}{4q}f_{\mathbb{U},(q-1)+2\sqrt{q}\cos(\theta)}=iq^{\frac{n}{2}}(\sin(n\theta)-\sqrt{q}\sin((n-1)\theta)).\] Extend the definition to $\theta\in[-\pi,\pi]$ by declaring $\widehat{f}_{\mathbb{U},-\theta}(e_n)=-\widehat{f}_{\mathbb{U},\theta}(e_n)$. 

Similarly, for $\psi\in L^2([0,\pi])$ we denote also by $\psi$ its odd extension to $[-\pi,\pi]$ i.e. $\psi(-\theta)=-\psi(\theta)$. Then \[\widehat{\psi}(n):=\frac{1}{2\pi}\int_{-\pi}^\pi\psi(\theta)e^{-in\theta}d\theta=\frac{-i}{2\pi}\int_{-\pi}^\pi\psi(\theta)\sin(n\theta)d\theta.\]

\begin{theorem}
For $\psi\in L^2[0,\pi]$, define \[F_\psi(e_n)=\frac{1}{2\pi}\int_{-\pi}^{\pi}\psi(\theta)\widehat{f}_{\mathbb{U},\theta}(e_n)d\theta.\] Then $F_\psi\in L^2(\Gamma\backslash\mathcal{E}(\Tcali))$ and \[\langle F_{\psi_1},F_{\psi_2}\rangle=\langle {\psi_1},{\psi_2}\rangle:=\frac{(q-1)}{2\pi}\int_0^\pi\psi_1(\theta)\overline{\psi_2(\theta)}(q+1-2\sqrt{q}\cos(\theta))d\theta\]
\end{theorem}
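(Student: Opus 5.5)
Since $\psi$ is odd and $\widehat{f}_{\mathbb{U},\theta}$ is odd in $\theta$, the integrand is even, so $F_\psi(e_n)=\frac{1}{\pi}\int_0^\pi\psi(\theta)\widehat{f}_{\mathbb{U},\theta}(e_n)\,d\theta$. I will compute $F_\psi$ in terms of Fourier coefficients of $\psi$ and reduce the claimed identity to Parseval's identity on $[-\pi,\pi]$.

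\textbf{Step 1: express $F_\psi$ through Fourier coefficients.} Substituting $\widehat{f}_{\mathbb{U},\theta}(e_n)=iq^{n/2}(\sin(n\theta)-\sqrt{q}\sin((n-1)\theta))$ and using $\widehat{\psi}(m)=\frac{-i}{2\pi}\int_{-\pi}^{\pi}\psi(\theta)\sin(m\theta)\,d\theta$, I get
\[F_\psi(e_n)=-q^{n/2}\bigl(\widehat{\psi}(n)-\sqrt{q}\,\widehat{\psi}(n-1)\bigr),\qquad n\ge 1,\]
where $\widehat{\psi}(0)=0$ because $\psi$ is odd. Writing $c_n:=\widehat{\psi}(n)-\sqrt{q}\,\widehat{\psi}(n-1)$, this gives $|F_\psi(e_n)|^2\nu(e_n)=(q-1)|c_n|^2$.

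\textbf{Step 2: square-integrability and the norm identity.} Summing over $n$,
\[\|F_\psi\|^2=(q-1)\sum_{n\ge1}|c_n|^2,\]
which is finite by Bessel's inequality, since $\psi\in L^2$ implies $\widehat{\psi}\in\ell^2$. The sequence $(c_n)_{n\ge1}$ consists of the positive-index Fourier coefficients of $\Psi(\theta):=\psi(\theta)(1-\sqrt{q}e^{-i\theta})$, because multiplying by $e^{-i\theta}$ shifts Fourier coefficients by one. Oddness gives $\widehat{\psi}(-m)=-\widehat{\psi}(m)$, so $\widehat{\Psi}(-n)=-\widehat{\psi}(n)+\sqrt{q}\,\widehat{\psi}(n+1)$ for $n\ge 0$. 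These negative-index coefficients are not the $c_n$, so the sum over all indices does not collapse to $2\sum_{n\ge1}|c_n|^2$. Handling this is the main obstacle.

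\textbf{Step 3: the cross term.} I will compute $\sum_{n\ge1}|c_n|^2$ directly:
\[\sum_{n\ge1}|c_n|^2=(1+q)\sum_{m\ge1}|\widehat{\psi}(m)|^2-2\sqrt{q}\,\mathrm{Re}\sum_{n\ge1}\widehat{\psi}(n)\overline{\widehat{\psi}(n-1)}.\]
On the Fourier side, oddness gives
\[\frac{1}{2\pi}\int_{-\pi}^{\pi}|\psi|^2\cos\theta\,d\theta=\mathrm{Re}\sum_{m\in\Z}\widehat{\psi}(m)\overline{\widehat{\psi}(m-1)}=2\,\mathrm{Re}\sum_{n\ge1}\widehat{\psi}(n)\overline{\widehat{\psi}(n-1)},\]
where the second equality uses that the $m$ and $1-m$ terms agree and the term at $m=1$ vanishes because $\widehat{\psi}(0)=0$. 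Combining this with $\sum_{m\ge1}|\widehat{\psi}(m)|^2=\frac{1}{2\pi}\int_0^\pi|\psi|^2\,d\theta$, I obtain
\[\|F_\psi\|^2=\frac{q-1}{2\pi}\int_0^\pi|\psi(\theta)|^2\,(q+1-2\sqrt{q}\cos\theta)\,d\theta.\]

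\textbf{Step 4: from norms to inner products.} The identity for $\langle F_{\psi_1},F_{\psi_2}\rangle$ follows by polarization, since $\psi\mapsto F_\psi$ is linear and both sides are sesquilinear. The remaining checks are bookkeeping: the exact value of $\nu(e_n)=(q-1)q^{-n}$, and the indexing convention that the first edge is $e_1$, which affects the $n-1$ shift in Step 1.
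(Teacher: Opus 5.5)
Your proof is correct and is essentially the paper's proof. Both write $F_\psi(e_n)=q^{n/2}(\sqrt{q}\,\widehat{\psi}(n-1)-\widehat{\psi}(n))$ using $\widehat{\psi}(0)=0$, let $\nu(e_n)=(q-1)q^{-n}$ cancel the factor $q^{n}$, and conclude with Parseval plus the cosine shift; you compute the norm and polarize (the paper expands the pairing directly), and you handle the cross term by the symmetry $m\leftrightarrow 1-m$ rather than by $\widehat{\psi}(n+1)+\widehat{\psi}(n-1)=2\widehat{(\psi\cos)}(n)$, while the unused auxiliary function in Step 2 should read $\psi(\theta)(1-\sqrt{q}e^{i\theta})$ rather than $\psi(\theta)(1-\sqrt{q}e^{-i\theta})$, which does not affect the argument.
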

\begin{proof}
Observe that $\widehat{\psi}(0)=0$, $F_\psi(n)=q^{\frac{n}{2}}(\sqrt{q}\widehat{\psi}(n-1)-\widehat{\psi}(n))$ and $\overline{F_{\psi}}=-F_{\overline{\psi}}$. Then,
\begin{align*}
-(q-1)^{-1}\langle F_{\psi_1},F_{\overline{\psi_2}}\rangle&=(q-1)^{-1}\langle F_{\psi_1},\overline{F_{\psi_2}}\rangle\\
&=\sum_{n\geq1}(\sqrt{q}\widehat{\psi_1}(n-1)-\widehat{\psi_1}(n))(\sqrt{q}\widehat{\psi_2}(n-1)-\widehat{\psi_2}(n))\\
&=\sum_{n\geq1}q\widehat{\psi_1}(n-1)\widehat{\psi_2}(n-1)-\sqrt{q}(\widehat{\psi_1}(n-1)\widehat{\psi_2}(n)+\widehat{\psi_1}(n)\widehat{\psi_2}(n-1))+\widehat{\psi_1}(n)\widehat{\psi_2}(n)\\
&=\sum_{n\geq1}(q+1)\widehat{\psi_1}(n)\widehat{\psi_2}(n)-\sqrt{q}\widehat{\psi_1}(n)(\widehat{\psi_2}(n+1)+\widehat{\psi_2}(n-1))\end{align*}

Now we use the relation $\widehat{\psi}(n+1)+\widehat{\psi}(n-1)=2\widehat{(\psi\cos)}(n)$ and Parseval's formula $\sum_{n\geq1}\widehat{\psi}_1(n)\overline{\widehat{\psi}_2(n)}=\frac{1}{2\pi}\int_0^\pi{\psi}_1(\theta)\overline{{\psi}_2(\theta)}d\theta$ to obtain

\begin{align*}
-(q-1)^{-1}\langle F_{\psi_1},F_{\overline{\psi_2}}\rangle&=\sum_{n\geq1}\widehat{\psi_1}(n)((q+1)\widehat{\psi_2}(n)-2\sqrt{q}(\widehat{\psi_2\cos})(n))\\
&=-\frac{1}{2\pi}\int_0^\pi\psi_1(\theta)\psi_2(\theta)(q+1-2\sqrt{q}\cos(\theta))d\theta\\
&=-(q-1)^{-1}\langle \psi_1,\overline{\psi_2}\rangle
\end{align*}
\end{proof}

Denote by $dE$ the space of functions of the form $F_\psi$.

\begin{theorem}\label{spectraldecedges} We have $L^2(\Gamma\backslash\mathcal{E}(\mathcal{T}))=\C+dE$ and for every $f\in L^2(\Gamma\backslash\mathcal{E}(\mathcal{T}))$, \[f(e_n)=\int fd\nu+\frac{2}{(q-1)}\int_0^\pi\langle f,\widehat{f}_{\mathbb{U},\theta}\rangle \widehat{f}_{{\mathbb{U},\theta}}(e_n)\frac{d\theta}{(q+1-2\sqrt{q}\cos(\theta))}\]
\end{theorem}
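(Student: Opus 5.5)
We reduce the statement to a Fourier-series isomorphism, exactly parallel to the previous theorem. Write $L^2_0$ for the orthogonal complement of the constants in $L^2(\Gamma\backslash\mathcal{E}(\Tcali))$; note $\langle \cdot,\cdot\rangle$ here is taken with respect to $\nu$, so $\nu(e_n)=(q-1)q^{-n}$. The previous theorem shows that $\psi\mapsto F_\psi$ is an isometry from $L^2([0,\pi])$, with the weighted inner product $\frac{q-1}{2\pi}\int_0^\pi\psi_1\overline{\psi_2}(q+1-2\sqrt q\cos\theta)\,d\theta$, into $L^2(\Gamma\backslash\mathcal{E}(\Tcali))$. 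The plan has three steps.

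First, check that $dE\perp\C$. Since $\widehat{\psi}(0)=0$ and $F_\psi(e_n)=q^{n/2}(\sqrt q\,\widehat\psi(n-1)-\widehat\psi(n))$, we get
\[
(q-1)^{-1}\langle F_\psi,1\rangle=\sum_{n\ge1}q^{-n/2}\bigl(\sqrt q\,\widehat\psi(n-1)-\widehat\psi(n)\bigr),
\]
which telescopes to $\sqrt q\,\widehat\psi(0)=0$. For convergence, first take $\psi$ a trigonometric polynomial and then pass to the limit by density.

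Second, show that $\C+dE$ is all of $L^2$. Take $f\in L^2$ orthogonal to both $1$ and $dE$, and set $a_n:=q^{-n/2}f(e_n)$, so that $(a_n)\in\ell^2$. The conditions $\langle f,F_\psi\rangle=0$ for $\psi=\sin(m\theta)$, $m\ge1$, read $\sqrt q\,\overline{a_{m+1}}-\overline{a_m}=0$ up to constants; more precisely, $\sum_n a_n(\sqrt q\,\overline{\widehat\psi(n-1)}-\overline{\widehat\psi(n)})=0$ for all $\psi$. Taking $\widehat\psi=\delta_m$ gives $a_m=\sqrt q\,a_{m+1}$ for every $m\ge1$, hence $a_n=q^{-(n-1)/2}a_1$ and $f(e_n)=q^{1/2}a_1$ is constant. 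Orthogonality to $1$ then forces $f=0$. This is the step I expect to need the most care, both with the indices (the $n=1$ term involves $\widehat\psi(0)=0$) and with justifying that such $\psi$ are admissible, which holds since the odd extension of $\sin(m\theta)$ has the single coefficient pair $\pm m$. Because $F$ is an isometry, its image is closed, so $L^2=\C\oplus dE$.

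Third, derive the inversion formula. For $f\in L^2_0$ write $f=F_\psi$. The isometry gives $\langle f,F_\phi\rangle=\langle\psi,\phi\rangle$ for all $\phi$. On the other hand, exchanging the integral with the sum (legitimate for $\phi$ nice),
\[
\langle f,F_\phi\rangle=\frac1{2\pi}\int_{-\pi}^{\pi}\overline{\phi(\theta)}\,\langle f,\widehat f_{\mathbb U,\theta}\rangle\,d\theta ,
\]
using that $\overline{\widehat f_{\mathbb U,\theta}}$ is real up to $i$; the signs must be tracked via $\overline{F_\phi}=-F_{\overline\phi}$. By oddness in $\theta$, this equals $\frac1\pi\int_0^\pi$. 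Comparing with the weighted inner product gives
\[
\psi(\theta)=\frac{2}{q-1}\cdot\frac{\langle f,\widehat f_{\mathbb U,\theta}\rangle}{q+1-2\sqrt q\cos\theta},
\]
up to the constant coming from the $i$ factors. Substituting into $f=F_\psi=\frac1{2\pi}\int_{-\pi}^{\pi}\psi\,\widehat f_{\mathbb U,\theta}\,d\theta=\frac1\pi\int_0^\pi\psi\,\widehat f_{\mathbb U,\theta}\,d\theta$ then yields the stated formula, and we add back $\int f\,d\nu$, the projection onto the constants. The main routine risk is bookkeeping of the normalizing constants and the factors of $i$; in the final formula I would calibrate these by testing against $\psi=\sin\theta$.
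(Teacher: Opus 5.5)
Your argument follows the paper's proof. Step 2 is the paper's argument that anything orthogonal to $dE$ is constant (pairing against $\widehat\psi=\delta_m$). Step 3 is its identification of $\psi$ by pairing $f=F_\psi$ with $F_\phi$; you use arbitrary sine polynomials $\phi$ where the paper uses the indicator of $[\theta_0,\theta_0+h]$ and lets $h\to0$. Your additions fill gaps the paper leaves: $dE\perp\C$ by telescoping, and $dE$ closed because $\psi\mapsto F_\psi$ is an isometry and the weight is $\geq(\sqrt q-1)^2>0$. One point should be made explicit. Since $\widehat{f}_{\mathbb{U},\theta}\notin L^2(\nu)$, for general $f$ the pairing $\langle f,\widehat{f}_{\mathbb{U},\theta}\rangle$ means the $L^2(d\theta)$-sum of $(q-1)\sum_n q^{-n/2}f(e_n)(-i)\bigl(\sin n\theta-\sqrt q\sin(n-1)\theta\bigr)$. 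For a sine polynomial $\phi$, $F_\phi$ is finitely supported, so your interchange is just a finite sum.

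The genuine problem is your last sentence. Your formula $\psi(\theta)=\frac{2}{q-1}\langle f,\widehat{f}_{\mathbb{U},\theta}\rangle/(q+1-2\sqrt q\cos\theta)$ is exact. There are no stray factors of $i$, since $\overline{\widehat{f}_{\mathbb{U},\theta}(e_m)}\,\widehat{f}_{\mathbb{U},\theta}(e_n)$ contributes $(-i)\cdot i=1$. Substituting it into $F_\psi=\frac1\pi\int_0^\pi\psi\,\widehat{f}_{\mathbb{U},\theta}\,d\theta$ gives
\[f(e_n)=\int f\,d\nu+\frac{2}{\pi(q-1)}\int_0^\pi\langle f,\widehat{f}_{\mathbb{U},\theta}\rangle\,\widehat{f}_{\mathbb{U},\theta}(e_n)\,\frac{d\theta}{q+1-2\sqrt{q}\cos\theta},\]
not the printed constant $\frac{2}{q-1}$.

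The printed formula is in fact false, as the calibration you suggest shows. Take $f(e_1)=1$, $f(e_2)=-q$, and $f(e_n)=0$ for $n\geq3$; this is $f=-\frac{2i}{\sqrt q}F_\psi$ with $\psi=\sin\theta$. Then $\int f\,d\nu=0$ and $\langle f,\widehat{f}_{\mathbb{U},\theta}\rangle=-i(q-1)q^{-1/2}\sin\theta\,(q+1-2\sqrt q\cos\theta)$. So the printed right-hand side at $e_1$ equals $2\int_0^\pi\sin^2\theta\,d\theta=\pi$, whereas $f(e_1)=1$.

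The paper arrives at $\frac{2}{q-1}$ because its proof concludes $\psi=\frac{2\pi}{q-1}\cdots$. For $\phi$ the indicator of $[\theta_0,\theta_0+h]$, one has $F_\phi=\frac1\pi\int_{\theta_0}^{\theta_0+h}\widehat{f}_{\mathbb{U},\theta}\,d\theta$, since the odd extension doubles the $\frac1{2\pi}$. Hence $h^{-1}\langle f,F_\phi\rangle\to\frac1\pi\langle f,\widehat{f}_{\mathbb{U},\theta_0}\rangle$, whereas the paper writes the limit as $\langle f,\widehat{f}_{\mathbb{U},\theta_0}\rangle$.

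So your argument proves the identity with $\frac{2}{\pi(q-1)}$, and you should state it that way rather than claim agreement with the printed constant. The same factor $\frac1\pi$ carries over to the constant in Theorem~\ref{Thm spectral resolution on edges}. This is harmless for the main theorems, which only use the shape of the decomposition.
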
  
\begin{proof}
Indeed, if $f$ in $L^2(\Gamma\backslash\mathcal{E}(\mathcal{T}))$ is orthogonal to $F_\psi$ for of $\psi$, then \[0=\sum_{n\geq1}\frac{f(e_n)F_\psi(e_n)}{q^n}=\sum_{n\geq1}\frac{q^{n/2}f(e_n)(\sqrt{q}\widehat{\psi(n-1)}-\widehat{\psi}(n)}{q^n}=\sum_{n\geq1}q^{-n/2}(g(e_{n+1})-g(e_n))\widehat{\psi}(n)\]

viewing this equation as an equality of an inner product in $l^2(\N)$ with $\widehat{\psi}(n)$ varying, we conclude that $f(e_{n+1})=f(e_n)$ for all $n\geq1$ so $f$ is constant.

For the second claim, we argue using the density of $f\in dE\cap L^1(\Gamma\backslash\mathcal{E}(\mathcal{T}))$. Write $f=F_\psi$ and consider an interval $[\theta_0,\theta_0+h]\subseteq[0,\pi]$. On one hand, \[\lim_{h\to0}\langle f,F_{\mathbbm{1}_{[\theta_0,\theta_0+h]}}\rangle=\lim_{h\to0}\langle f,\frac{1}{2\pi}\int_{\theta_0}^{\theta_0+h}\widehat{f}_{\mathbb{U},\theta}(e_n)d\theta\rangle=\langle f,\widehat{f}_{\mathbb{U},\theta_0}\rangle.\] On the other hand, \[\lim_{h\to0}\langle f,F_{\mathbbm{1}_{[\theta_0,\theta_0+h]}}\rangle=\lim_{h\to0}\frac{(q-1)}{2\pi}\int_{\theta_0}^{\theta+h} \psi(\theta)(q+1-2\sqrt{q}\cos(\theta))d\theta=\frac{(q-1)}{2\pi}\psi(\theta)(q+1-2\sqrt{q}\cos(\theta))d\theta.\] We conclude that $\psi=\frac{2\pi}{(q-1)}\frac{\langle f,\widehat{f}_{\mathbb{U},\theta_0}\rangle}{(q+1-2\sqrt{q}\cos(\theta))}$

\end{proof}

\begin{remark}
In \cite{efrat}, the orthogonal complement of $\C u_{\mathrm{cst}}\oplus\C u_{\mathrm{alt}}$ is denote by $E$. Thus, the notation $dE$ is motivated by the following fact. If for a function $f$ on $\mathcal{V}(\Gamma\backslash\mathcal{T})$ we define $df(e_n):=f(v_{n-1})+f(v_n)$. One can compute that $d(\T f)=\mathbb{U}(df)-(q-1)df$. In particular, if $\T f=\tilde{\lambda}f$, we have that $\mathbb{U}(df)=(q-1+\tilde{\lambda})df$.
\end{remark}

Now we aim for the spectral decomposition to be in terms of Eisenstein series. From now on we denote $||\cdot||_{\boldsymbol{e}^+_{1/2}}$ simply by $||\cdot||_{\boldsymbol{e}}$. For $(x,y)\in k_\infty^2$ and $g\in \GL_2(k_\infty)$ we define $\psi_{\boldsymbol{e}}(g)=|\det(g)|||(0,1)g||_{\boldsymbol{e}}^{-2}$. This is a right $\mathbb{I}$-invariant function. Indeed, recall that \[||(x,y))||_{\boldsymbol{e}}=\sup\{||(x,y)||_{L_0},q^{1/2}||(x,y)||_{TL_1}\}.\] Then $\psi_{\boldsymbol{e}}(g)$ is right invariant by $\Gamma_0(T^{-1})$ which fixes the lattices $L_0$ and $TL_1$. Now, $w_\pi L_0=TL_1$ and $w_\pi TL_1=TL_0$ from where 
\begin{align*}
\psi_{\e}(gw_\pi)&=q^{-1}(\sup\{||(1,0)g||_{TL_1},q^{1/2}||(1,0)g||_{TL_0}\})^{-2}\\
&=q^{-1}(\sup\{||(1,0)g||_{TL_1},q^{-1/2}||(1,0)g||_{L_0}\})^{-2}\\
&=\psi_{\e}(g).\end{align*}

For $s\in\C$, we define the Eisenstein series \[E_{\boldsymbol{e}}(g,s)=\sum_{\gamma\in \Gamma_\infty\backslash \Gamma}\psi_{\boldsymbol{e}}(\gamma g)^s.\] For fixed $s$, the function $E_{\boldsymbol{e}}(g,s)$ is left $\Gamma$-invariant and right $\mathbb{I}$-invariant. Therefore, it defines a function $E_{\boldsymbol{e}}(e_{n+1},s):=E_{\boldsymbol{e}}(g_n,s)$ on $\Gamma\backslash \mathcal{E}(\Tcali)$. 

\begin{proposition}
The Eisenstein series satisfy \begin{equation}\label{eiseneigenvalueedge}\U(E_{\boldsymbol{e}}(\cdot,s))(g)=((q-1)+q^s+q^{1-s})E_{\boldsymbol{e}}(g,s).\end{equation}
\end{proposition}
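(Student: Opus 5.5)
The plan is to reduce the identity to a statement about the single term $\psi_{\e}^s$ on the tree $\Tcali$ itself, and then sum over $\Gamma_\infty\backslash\Gamma$. The operator $\U$ is defined on $\mathcal{E}(\Tcali)$ through adjacency, which is preserved by $\PGL_2(k_\infty)$. Hence $\U$ commutes with left translation by $\Gamma$. For $\mathrm{Re}(s)>1$ the series $E_{\e}(g,s)$ converges absolutely, so we may apply $\U$ term by term. It therefore suffices to prove
\[\U(\psi_{\e}(\gamma\,\cdot)^s)=\big((q-1)+q^s+q^{1-s}\big)\psi_{\e}(\gamma\,\cdot)^s,\]
viewing $g\mapsto\psi_{\e}(\gamma g)^s$ as a function on $\PGL_2(k_\infty)/\mathbb{I}\cong\mathcal{E}(\Tcali)$. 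By equivariance, this reduces to the case $\gamma=1$. The identity then extends to all $s$ by meromorphic continuation, since for fixed $g$ both sides are rational functions of $q^{-s}$, as for $E_{\v}$.

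The key step is to describe $\psi_{\e}$ geometrically via the Busemann (horocycle) function attached to the end $\infty$ of $\Tcali$. First consider vertices. The function $\psi_{\v}(g)=|\det g|\,||(0,1)g||_{L_0}^{-2}$ is invariant under the upper triangular unipotent group. I will check that it takes the form $\psi_{\v}(g\v)=q^{b(g\v)}$, where $b$ is an integer-valued function with the following property: at each vertex, exactly one neighbour (the one pointing towards the cusp) has $b$ increased by $1$, and the remaining $q$ neighbours have $b$ decreased by $1$. This is a direct computation with $g=\begin{pmatrix}T^m&x\\0&1\end{pmatrix}$ representatives (Iwasawa decomposition), and it is consistent with \eqref{eiseneigenvalue}, since $q^s+q\cdot q^{-s}=q^s+q^{1-s}$. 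Next I will show that for an edge with endpoints $v,v'$,
\[\psi_{\e}(\text{edge})=q^{(b(v)+b(v'))/2},\]
that is, $\psi_{\e}$ is $q$ raised to the Busemann value at the midpoint. This follows from the formula $||(x,y)||_{\e}=\sup\{||(x,y)||_{L_0},q^{1/2}||(x,y)||_{TL_1}\}$ evaluated on $(0,1)g$. Using the computation in the proof of Lemma \ref{distinguishedred}, this supremum is realised by whichever of the two lattice norms corresponds to the endpoint higher towards $\infty$, with the factor $q^{1/2}$ accounting for the half step. The $w_\pi$-invariance already verified in the text guarantees that the answer does not depend on the orientation.

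With this description, the eigenvalue computation becomes a count of neighbours. Take an edge whose endpoints have heights $b$ and $b-1$, so its midpoint value is $b-\tfrac12$. At the upper endpoint there are $q$ other edges: one goes up, with midpoint $b+\tfrac12$, and $q-1$ go down, with midpoint $b-\tfrac12$. At the lower endpoint there are $q$ other edges, all going down, with midpoint $b-\tfrac32$. Summing $q^{s\cdot(\text{midpoint})}$ over these $2q$ edges gives
\[q^{s(b-\frac12)}\big(q^s+(q-1)+q\cdot q^{-s}\big),\]
which is the claimed eigenvalue. The main obstacle I expect is the second step, namely verifying cleanly that $\psi_{\e}$ is the midpoint Busemann function, including the correct normalisation by $|\det g|$ and the half power of $q$. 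I would handle it by checking it on the explicit coset representatives $\begin{pmatrix}T^m&x\\0&1\end{pmatrix}\e$ and $\begin{pmatrix}T^m&x\\0&1\end{pmatrix}w_\pi\e$, which cover all edges modulo the unipotent group.
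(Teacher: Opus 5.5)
Your proposal is correct, but after the common first step it takes a different route from the paper. That first step is reducing to the single term $\psi_{\e}^s$ via $\PGL_2(k_\infty)$-equivariance of $\U$. The paper then lists the $2q$ neighbouring edges explicitly as $gt_\alpha\e$ ($\alpha\in\F_q^\times\cup\{\infty\}$) and $gs_0t_\alpha\e$ ($\alpha\in\F_q$), writes each $\psi_{\e}^s$ in terms of the bottom row $(x,y)$ of $g$, and splits into cases according to the relative size of $|x|$ and $|y|$, writing out only $|x|=|y|$. You instead express $\psi_{\e}$ through the height function of the endpoints. The single count ``one edge up, $q-1$ at the same level, $q$ down'' then handles all cases at once. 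It also explains why the edge eigenvalue is the vertex eigenvalue $q^s+q^{1-s}$ shifted by $q-1$. The paper's computation needs nothing beyond the formula for $\|\cdot\|_{\e}$. Yours needs the neighbour structure of $\psi_{\v}$ (a short Iwasawa computation), but it is uniform and conceptual.

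There is one correction, harmless for the conclusion: your identity $\psi_{\e}=q^{(b(v)+b(v'))/2}$ is off by a constant. Since $\psi_{\e}(g)=|\det g|\,\|(0,1)g\|_{\e}^{-2}$ turns the supremum defining $\|\cdot\|_{\e}$ into a minimum, and $q^{1/2}\|(x,y)\|_{TL_1}=q^{-1/2}\|(x,y)s_0\|_{L_0}$, one gets
\[\psi_{\e}(g)=\min\{\psi_{\v}(g),\psi_{\v}(gs_0)\}=q^{-1/2}\,q^{(b(v)+b(v'))/2}.\]
So the supremum is attained at the \emph{lower} endpoint, not the higher one: for $g=1$ the endpoints have heights $0$ and $1$, while $\psi_{\e}(1)=1$. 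This only multiplies $\psi_{\e}^s$ by the constant $q^{-s/2}$, so your neighbour count and the eigenvalue are unchanged.

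Your second family of representatives is redundant, because $w_\pi$ fixes $\e$. Every edge is $\begin{pmatrix}T^m&x\\0&1\end{pmatrix}\e$, with $\begin{pmatrix}T^m&x\\0&1\end{pmatrix}\v$ its lower endpoint.

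Finally, the corrected identity gives, term by term, $\psi_{\v}(\gamma g)^s+\psi_{\v}(\gamma gs_0)^s=(1+q^s)\psi_{\e}(\gamma g)^s$. Hence
\[(1+q^s)E_{\e}(g,s)=E_{\v}(g,s)+E_{\v}(gs_0,s).\]
This supplies the rationality of $E_{\e}$ in $q^{-s}$ that your continuation step uses; the paper asserts rationality only for $E_{\v}$. Combined with $\U(df)=d(\T f)+(q-1)df$ from the paper's remark (which also holds on the tree), it yields the proposition directly from $\T E_{\v}=(q^s+q^{1-s})E_{\v}$.
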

\begin{proof}
It is enough to show that $\U(\psi_{\boldsymbol{e}}^s)(g)=((q-1)+q^s+q^{1-s})\psi_{\boldsymbol{e}}^s(g).$ To do this computation, we need control over the neighbors of an edge. Define  $s_\infty=\begin{pmatrix}T^{-1}&0\\0&1\end{pmatrix}$ and for $\alpha\in\F_q$ let $s_\alpha=\begin{pmatrix}T&0\\\alpha T&1\end{pmatrix}$. We start by the fact that the vertices in $\mathcal{V}(\Tcali)$ that are adjacent to a vertex of the form $g\boldsymbol{v}$ are the $(q+1)$ vertices $gs\boldsymbol{v}$ with $s\in\{s_\alpha\mid \alpha\in\mathbb{P}^1(\F_q)\}$. Note that $\boldsymbol{e}=\{\v,s_0\v\}$. Let $t_\infty=\begin{pmatrix}
    0&1\\1&0
\end{pmatrix}$ and $t_\alpha=\begin{pmatrix}1&0\\\alpha&1\end{pmatrix}$ for $\alpha\in \F_q$. Observe $t_\infty s_0=s_\infty\begin{pmatrix}
    0&T\\T&0
\end{pmatrix}$ and for $\alpha\in\F_q$, $t_\alpha s_0=s_\alpha$. Then, $g\e=\{g\v,gs_0\v\}$ and $gt_\alpha\e=\{g\v,gs_\alpha\v\}$ for $\alpha\in\mathbb{P}^{1}(\F_q)$. We conclude that the edges adjacent to $g\e$ agree with the collection of $2q$ edges formed by: the $q$ edges $gt_\alpha\e=\{g\v,gs_\alpha \v\}$ with $\alpha\in\F_q^{\times}\cup\{\infty\}$ and the $q$ edges $gs_0t_\alpha\e=\{gs_0\v,gs_0s_\alpha \v\}$ with $\alpha\in\F_q$. Write the second column of $g$ as $(x,y)$. Then \begin{align*}\mathbb{U}(\psi_{\e}^s)(g)&=\sum_{\alpha\in \F_q^\times\cup\{\infty\}}\psi_{\e}^s(gt_\alpha)+\sum_{\alpha\in \F_q}\psi_{\e}^s(gs_0 t_\alpha)\\&=|\det(g)|^s\left(\sum_{\alpha\in\F_q^\times}\sup\{q^{1/2}|x+\alpha y|,|y|\}^{-2s}+\sum_{\alpha\in \F_q}q^{s}\sup\{q^{1/2}|Tx+\alpha y|,|y|\}^{-2s}+\right.\\&\quad+\left.\sup\{q^{1/2}|y|,|x|\}^{-2s}\right).\end{align*}
At this point the verification is reduced to the study of the different possibilities for $|x|$ and $|y|$. We exemplify with the case $|x|=|y|$ and omit the rest of the cases. Suppose $|x|=|y|$ so that $\psi_{\e}^s(g)=|\deg(g)|^sq^{-s}|y|^{-2s}$. There exists a unique $\alpha\in\F_q^\times$ such that $|x+\alpha y|<|y|$. In the other cases, $|x+\alpha y|=|y|$. Then \begin{align*}\mathbb{U}(\psi_{\e}^s)(g)&=|\det(g)|^s(|y|^{-2s}+(q-2)q^{-2}|y|^{-2s}+q^{1-2s}|y|^{-2s}+q^{-s}|y|^{-2s})\\&=((q-1)+q^s+q^{1-s})\psi_{\e}^s(g)\end{align*}
\end{proof}

\begin{corollary}\label{comparisonfyeins}
At $s=\frac{1}{2}+\frac{i\theta}{\log q}$ we have \[\widehat{f}_{\mathbb{U},\theta}(e_n)=\frac{\sqrt{q}-qe^{-i\theta}}{2e^{-i\theta}}E_{\boldsymbol{e}}(e_n,s)\]
\end{corollary}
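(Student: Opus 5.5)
Both sides are eigenfunctions of $\U$ with the same eigenvalue, so the plan is to identify them through uniqueness of eigenfunctions and then fix the constant. At $s=\frac12+\frac{i\theta}{\log q}$ we have $q^s+q^{1-s}=2\sqrt q\cos\theta$. By \eqref{eiseneigenvalueedge}, $E_{\e}(\cdot,s)$ satisfies $\U E_{\e}=\lambda E_{\e}$ with $\lambda=(q-1)+2\sqrt q\cos\theta$. For $0<\theta<\pi$ this $\lambda$ is not $(q-1)\pm2\sqrt q$, so Proposition \ref{Ueigenfns}(1) applies: $E_{\e}(\cdot,s)$ is a scalar multiple of $f_{\U,\lambda}$, hence of $\widehat f_{\U,\theta}$.

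The eigenfunction equation at $e_1$ determines everything from the single value at $e_1$. So it suffices to check the constant by comparing values at $e_1$. From the formula $\widehat f_{\U,\theta}(e_n)=iq^{n/2}(\sin n\theta-\sqrt q\sin((n-1)\theta))$ we get $\widehat f_{\U,\theta}(e_1)=i\sqrt q\sin\theta$. The identity therefore reduces to computing
\[
E_{\e}(e_1,s)=E_{\e}(g_0,s)=\sum_{\gamma\in\Gamma_\infty\backslash\Gamma}\psi_{\e}(\gamma)^s ,
\]
and showing that it equals $\dfrac{2i\sqrt q\sin\theta\, e^{-i\theta}}{\sqrt q-qe^{-i\theta}}$ at this $s$.

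For this value I mimic the proof of Lemma \ref{Eisenatzero}. For $\gamma$ with bottom row $(c,d)$ we have $|\det\gamma|=1$ and $\psi_{\e}(\gamma)=\sup\{q^{1/2}|c|,|d|\}^{-2}$, using \eqref{normramified} for $\|\cdot\|_{\e}$. So
\[
(q-1)E_{\e}(g_0,s)=\sum_{(c,d)=A}\sup\{q^{1/2}|c|,|d|\}^{-2s}.
\]
Removing the coprimality condition multiplies the sum by $\zeta_A(2s)$. The full sum over $(c,d)\ne(0,0)$ then splits into three pieces:
\begin{itemize}
\item $c=0$: this gives $(q-1)\zeta_A(2s)$.
\item $c\neq0$ with $\deg c=n$ and $\deg d\le n$: here the maximum is $q^{n+1/2}$.
\item $c\neq0$ with $\deg d\ge n+1$: here the maximum is $|d|$.
\end{itemize}
Each piece is a geometric series in $q^{1-2s}$ and $q^{2-2s}$, which I can sum in closed form. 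This yields a rational function $R(q^{-s})$ with $E_{\e}(g_0,s)=R(q^{-s})/\zeta_A(2s)$.

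Finally I substitute $q^{-s}=q^{-1/2}e^{-i\theta}$ and $q^{1-2s}=e^{-2i\theta}$ and simplify to the claimed value. I expect this to be the only real work: it is an algebraic identity in $e^{-i\theta}$, and some factors of $(1-e^{-2i\theta})$ must cancel. If the constant does not match exactly, I would recheck the normalization of $\psi_{\e}$ (the factor $q^{1/2}$ weighting in $\|\cdot\|_{\e}$) and the factor $(q-1)$ coming from $\F_q^\times$ in $\Gamma_\infty$, since $\Gamma=\PGL_2(A)$.
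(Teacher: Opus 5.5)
Your proposal is correct and is essentially the paper's own proof: the eigenspace of $\U$ is one-dimensional, so everything reduces to comparing values at $e_1$, and the same three-way split of $E^\star_{\e}(e_1,s)$ used in Lemma \ref{Eisenatzero} gives $E^\star_{\e}(e_1,s)=\frac{q-1}{1-q^{1-s}}$, hence $E_{\e}(e_1,s)=\frac{1-q^{1-2s}}{1-q^{1-s}}=\frac{2i\sin\theta\,e^{-i\theta}}{1-\sqrt{q}\,e^{-i\theta}}$ at your $s$, which is exactly the value you need, so the normalization check you worried about comes out fine. One caveat: your geometric series converge only for $\mathrm{Re}(s)>1$, so evaluating on $\mathrm{Re}(s)=\tfrac12$, and likewise using the eigenvalue relation there, goes through the continuation of these expressions as rational functions of $q^{-s}$, which the paper also uses without comment.
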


\begin{proof}
    By Proposition \ref{Ueigenfns} and Proposition \ref{eiseneigenvalueedge}, $E_{\e}(e_n,s)$ is a multiple of $\widehat{f}_{\mathbb{U},\theta}$ so it is enough to compare their value at $e_1$. We proceed as in Lemma \ref{Eisenatzero} and we skip some computations. From Lemma \ref{distinguishedred}, we need to compute \[E_{\boldsymbol{e}}(e_1,s)=\frac{1}{q-1}\sum_{(c,d)=A}\max\{q^{1/2}|c|,|d|\}^{-2s}.\] As before, it suffices to compute \[E^\star_{\boldsymbol{e}}(e_1,s):=\sum_{(0,0)\neq(c,d)}\max\{q^{1/2}|c|,|d|\}^{-2s},\] since $E^\ast_{\boldsymbol{e}}(e_1,s)=\zeta_{A}(2s)(q-1)E(e_1,s)$. We split the sum as \begin{equation}\label{sume1}
    E^\star_{\boldsymbol{e}}(e_1,s)=\sum_{0\neq d}|d|^{-2s}+\sum_{c\neq 0,d\in A}\max\{q^{1/2}|c|,|d|\}^{-2s}.\end{equation} The second summand in \eqref{sume1} becomes \begin{align}\label{sume12}\sum_{c\neq 0,d\in A}\max\{q^{1/2}|c|,|d|\}^{-2s}&=\sum_{n\geq0}(q-1)(q^{2-2s})^n\left(\frac{q^{1-s}(1-q^{1-2s})+(q-1)q^{1-2s}}{1-q^{1-2s}}\right)\nonumber\\&=\frac{(q-1)q^{1-s}(1-q^{1-2s})+(q-1)^2q^{1-2s}}{(1-q^{1-2s})(1-q^{2-2s})}.\end{align}
    
    Adding \eqref{sum11} and \eqref{sume12}, we obtain

\[E^\star_{\boldsymbol{e}}(e_1,s)=\frac{q-1}{1-q^{1-2s}}+\frac{(q-1)q^{1-s}(1-q^{1-2s})+(q-1)^2q^{1-2s}}{(1-q^{1-2s})(1-q^{2-2s})}=\frac{q-1}{1-q^{1-s}}.\]

In conclusion $E_{\boldsymbol{e}}(e_1,s)=\frac{(1-q^{1-2s})}{1-q^{1-s}}$ and we finish the proof by specializing at $s=\frac{1}{2}+\frac{i\theta}{\log q}$.
\end{proof}

\begin{theorem}\label{Thm spectral resolution on edges}
    The spectral resolutions for $f\in L^2(\Gamma\backslash\mathcal{E}(\mathcal{T}))$ reads \[f(e_n)=\int fd\nu+\frac{q}{2(q-1)}\int_0^\pi\left\langle f,E_{\e}(\cdot,\frac{1}{2}+\frac{i\theta}{\log q})\right\rangle E_{\e}(e_n,\frac{1}{2}+\frac{i\theta}{\log q})d\theta\]
\end{theorem}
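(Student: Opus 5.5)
The plan is to start from Theorem \ref{spectraldecedges}, which already gives
\[f(e_n)=\int fd\nu+\frac{2}{(q-1)}\int_0^\pi\langle f,\widehat{f}_{\mathbb{U},\theta}\rangle \widehat{f}_{{\mathbb{U},\theta}}(e_n)\frac{d\theta}{(q+1-2\sqrt{q}\cos(\theta))},\]
and then rewrite the eigenfunctions $\widehat{f}_{\mathbb{U},\theta}$ in terms of Eisenstein series using Corollary \ref{comparisonfyeins}. Write $s=\frac12+\frac{i\theta}{\log q}$ and $c(\theta)=\frac{\sqrt{q}-qe^{-i\theta}}{2e^{-i\theta}}=\frac{\sqrt{q}e^{i\theta}-q}{2}$. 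The Corollary gives $\widehat{f}_{\mathbb{U},\theta}=c(\theta)E_{\e}(\cdot,s)$.

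Since the inner product is conjugate-linear in the second variable, the integrand becomes
\[\langle f,\widehat f_{\mathbb{U},\theta}\rangle\widehat f_{\mathbb{U},\theta}(e_n)=|c(\theta)|^2\,\langle f,E_{\e}(\cdot,s)\rangle E_{\e}(e_n,s).\]
We compute
\[|c(\theta)|^2=\tfrac14|\sqrt{q}e^{i\theta}-q|^2=\tfrac{q}{4}|e^{i\theta}-\sqrt q|^2=\tfrac q4\,(q+1-2\sqrt q\cos\theta).\]
This factor exactly cancels the density $(q+1-2\sqrt{q}\cos\theta)^{-1}$. The remaining constant is
\[\frac{2}{q-1}\cdot\frac q4=\frac{q}{2(q-1)},\]
which is the constant in the statement.

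Before this, I would check two points. First, the identity $\widehat f_{\mathbb{U},\theta}=c(\theta)E_{\e}(\cdot,s)$ requires the two sides to agree at $e_1$. I would verify this directly: $\widehat f_{\mathbb{U},\theta}(e_1)=i\sqrt q\sin\theta$, while $E_{\e}(e_1,s)=\frac{1-e^{-2i\theta}}{1-\sqrt q e^{-i\theta}}$. Multiplying the latter by $c(\theta)$ and simplifying $(e^{i\theta}-e^{-i\theta})/2$ gives $i\sqrt q\sin\theta$. Second, both sides are $\U$-eigenfunctions with the same eigenvalue, namely \eqref{eiseneigenvalueedge} at $q^s+q^{1-s}=2\sqrt q\cos\theta$. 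By Proposition \ref{Ueigenfns} the eigenspace is one-dimensional, so agreement at $e_1$ forces agreement everywhere.

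The main obstacle is not the algebra. It is making sure that the normalizations in Theorem \ref{spectraldecedges} are consistent: which measure ($\nu$ versus the un-normalized sum) defines $\langle\cdot,\cdot\rangle$, and whether $\theta$ runs over $[0,\pi]$ or over $[-\pi,\pi]$ with the odd extension. If a factor of $2$ or of $(q-1)$ appears discrepant, I would recheck it by testing the decomposition on a simple function. A convenient choice is the indicator of $e_1$ with its mean removed, for which both sides can be evaluated explicitly.
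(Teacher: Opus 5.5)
Your argument is exactly the paper's proof. You insert Corollary~\ref{comparisonfyeins} into Theorem~\ref{spectraldecedges}, use $|c(\theta)|^2=\frac{q}{4}(q+1-2\sqrt{q}\cos\theta)$ to cancel the density, and read off $\frac{2}{q-1}\cdot\frac{q}{4}$. Your side checks are also right: the value $i\sqrt{q}\sin\theta$ at $e_1$, and the one-dimensionality of the $\U$-eigenspace via the recursion.

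The problem is the normalization test you postponed. If you run it, it fails, and by a factor of $\pi$, not $2$ or $q-1$. Take $g=\mathbbm{1}_{e_1}-\nu(e_1)$, so that $\int g\,d\nu=0$ and $g(e_1)=1/q$. Since $\int E_{\e}(\cdot,s)\,d\nu=0$ (Lemma~\ref{integralfthetacero}), you get $\langle g,E_{\e}(\cdot,s)\rangle=\nu(e_1)\overline{E_{\e}(e_1,s)}$. Moreover $|E_{\e}(e_1,s)|^2=\frac{4\sin^2\theta}{q+1-2\sqrt{q}\cos\theta}$, and $\int_0^\pi\frac{\sin^2\theta\,d\theta}{q+1-2\sqrt{q}\cos\theta}=\frac{\pi}{2q}$. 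So the right-hand side of the statement at $e_1$ is $\frac{q}{2(q-1)}\cdot\frac{q-1}{q}\cdot 4\cdot\frac{\pi}{2q}=\frac{\pi}{q}$, whereas the left-hand side is $\frac{1}{q}$.

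The discrepancy comes from Theorem~\ref{spectraldecedges}, not from your substitution. In its proof, $\mathbbm{1}_{[\theta_0,\theta_0+h]}$ is extended oddly and $\widehat f_{\U,-\theta}=-\widehat f_{\U,\theta}$. Hence $F_{\mathbbm{1}_{[\theta_0,\theta_0+h]}}=\frac{1}{\pi}\int_{\theta_0}^{\theta_0+h}\widehat f_{\U,\theta}\,d\theta$, not $\frac{1}{2\pi}\int_{\theta_0}^{\theta_0+h}\widehat f_{\U,\theta}\,d\theta$. This gives $\psi(\theta)=\frac{2}{q-1}\,\frac{\langle f,\widehat f_{\U,\theta}\rangle}{q+1-2\sqrt{q}\cos\theta}$. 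Then $f=F_\psi=\frac{1}{\pi}\int_0^\pi\psi(\theta)\,\widehat f_{\U,\theta}\,d\theta$ has constant $\frac{2}{\pi(q-1)}$ in Theorem~\ref{spectraldecedges}. A direct check at $e_1$ and $e_2$ with $g=\mathbbm{1}_{e_1}$ confirms this constant.

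Feeding the corrected constant into your computation yields the identity with $\frac{q}{2\pi(q-1)}$ in place of $\frac{q}{2(q-1)}$. As stated, the formula is off by $\pi$, and any proof that takes Theorem~\ref{spectraldecedges} at face value inherits this, the paper's included. The value of the constant plays no role in the later reduction of Theorem~\ref{thmC'} to Weyl sums of $E_{\e}$, so that application is unaffected.
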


\begin{proof}
This is a direct consequence of Theorem \ref{spectraldecedges} and Corollary \ref{comparisonfyeins}. Indeed, the corollary implies that \[\langle f,\widehat{f}_{\mathbb{U},\theta}\rangle\widehat{f}_{\mathbb{U},\theta}(e_n)=\left|\frac{\sqrt{q}-qe^{-i\theta}}{2e^{-i\theta}}\right|^2\left \langle f,E_{\e}(\cdot,\frac{1}{2}+\frac{i\theta}{\log q})\right\rangle E_{\e}(e_n,\frac{1}{2}+\frac{i\theta}{\log q})\] and one computes that $\left|\frac{\sqrt{q}-qe^{-i\theta}}{2e^{-i\theta}}\right|^2=\frac{q(q+1-2\sqrt{q}\cos(\theta))}{4}$.
\end{proof}

As a consequence of Theorem \ref{Thm spectral resolution on edges}, it follows that, as for the function over vertices, the proof of Theorem \ref{thmC'} is reduced to work the limit $\eqref{testC}$ with $f$ equal to $E_{\e}(\cdot,\frac{1}{2}+\frac{i\theta}{\log q})$.

\section{The case of Eisenstein series and fundamental discriminant}\label{sectionfundamentaldiscriminant}

In this section, we prove Theorems \ref{thmA'} and \ref{thmC'} in the case of a sequence of square-free discriminants $D$. As a consequence of the previous section, we need to prove the limits \ref{testA} and \ref{testC} for Eisenstein series. Note that for a sequence of discriminants $D$ as in Theorem \ref{thmB'}, the square-free part of $D$ is fixed. For the rest of this section, $D$ is square-free.

\begin{lemma}\label{integralfthetacero} The following identity holds:
    $$\int_{\Gamma\backslash \mathcal{V}(\Tcali)}E_{\boldsymbol{v}}\left(v,\dfrac{1}{2}+i\dfrac{\theta}{\log q} \right)d\mu(v)=0=\int_{\Gamma\backslash \mathcal{E}(\Tcali)}E_{\boldsymbol{e}}\left(e,\dfrac{1}{2}+i\dfrac{\theta}{\log q} \right)d\nu(e).$$
\end{lemma}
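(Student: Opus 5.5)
The plan is to use the fact that the Eisenstein series are eigenfunctions of the self-adjoint operators $\T$ and $\U$ whose eigenvalues differ from those of the constant function. The difficulty is that $E_{\v}(\cdot,\tfrac12+i\tfrac{\theta}{\log q})$ is not in $L^2$, so self-adjointness cannot be quoted directly. It is, however, in $L^1(\mu)$, and I will redo the self-adjointness computation on truncated sums and control the boundary terms.

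\textbf{Growth of the Eisenstein series.} Write $s=\tfrac12+i\tfrac{\theta}{\log q}$ and $f=E_{\v}(\cdot,s)$. By \eqref{eiseneigenvalue}, $f$ satisfies the recurrence
\[
q f(v_{n-1})+f(v_{n+1})=2\sqrt q\cos(\theta)\, f(v_n)\qquad (n\ge1).
\]
The roots of $x^2-2\sqrt q\cos(\theta)x+q=0$ have modulus $\sqrt q$. For $\theta=0,\pi$ there is a double root, which contributes an extra polynomial factor $n$. Hence $f(v_n)=O(n\,q^{n/2})$. Since $\mu(v_n)\asymp q^{-n}$, the sum $\sum_n f(v_n)\mu(v_n)$ converges absolutely and $\int f\,d\mu$ is well defined. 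The same argument applies on edges: by Proposition \ref{Ueigenfns}, or by the explicit formula for $\widehat f_{\U,\theta}$ together with Corollary \ref{comparisonfyeins}, we get $E_{\e}(e_n,s)=O(n\,q^{n/2})$, while $\nu(e_n)\asymp q^{-n}$.

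\textbf{Summation by parts on vertices.} The measure is reversible for $\T$. One checks directly from the formulas for $\mu$ that
\[
q\,\mu(v_n)=\mu(v_{n-1})\quad (n\ge2),\qquad q\,\mu(v_1)=(q+1)\mu(v_0).
\]
Let $S_N=\sum_{n\le N}\mu(v_n)(\T f)(v_n)$. Expanding $\T f$ and regrouping with these identities, as in the proof that $\U$ is self-adjoint, gives
\[
S_N=(q+1)\sum_{n\le N}\mu(v_n)f(v_n)+R_N,
\]
with $R_N=\mu(v_N)f(v_{N+1})-\mu(v_{N+1})\,q\,f(v_N)$. This is the discrete analogue of $\langle \T f,1\rangle=\langle f,\T 1\rangle$ with the boundary flux kept explicitly. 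The growth bound gives $R_N=O(N q^{-N/2})\to0$.

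On the other hand, $S_N=2\sqrt q\cos(\theta)\sum_{n\le N}\mu(v_n)f(v_n)$. Letting $N\to\infty$ yields
\[
\bigl(q+1-2\sqrt q\cos\theta\bigr)\int f\,d\mu=0.
\]
Since $q+1-2\sqrt q\cos\theta\ge(\sqrt q-1)^2>0$, the integral vanishes.

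\textbf{Summation by parts on edges.} The argument is the same with $\U$ and $\nu$. The relevant reversibility relation is $q\,\nu(e_n)=\nu(e_{n-1})$. The diagonal terms $(2q-1)$ at $n=1$ and $(q-1)$ for $n\ge2$ are symmetric and cause no trouble. Since $\U 1=2q$, the truncated identity becomes
\[
\bigl(2q-(q-1)-2\sqrt q\cos\theta\bigr)\sum_{n\le N}\nu(e_n)E_{\e}(e_n,s)=o(1),
\]
and the coefficient again equals $q+1-2\sqrt q\cos\theta>0$.

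\textbf{Main obstacle.} The only delicate point is the justification that the boundary terms vanish. This requires the $O(nq^{n/2})$ growth bound, including the degenerate cases $\theta\in\{0,\pi\}$, and the exact form of the reversibility relations at the endpoint $v_0$ (respectively $e_1$). I will verify both of these directly from the explicit formulas above.
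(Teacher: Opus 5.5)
Your proof is correct, but it takes a different route from the paper's.

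\textbf{The paper's route.} The paper writes out only the vertex case. It uses Efrat's closed form $\widehat{f}_{\T,\theta}(v_0)=(q+1)i\sin\theta$, $\widehat{f}_{\T,\theta}(v_n)=q^{n/2}i\bigl(\sin((n+1)\theta)-q\sin((n-1)\theta)\bigr)$, and transfers the claim to $E_{\v}$ via \eqref{eqfandEisen}. It then checks by hand that the series telescopes: after shifting the second sum by two, everything cancels except the $v_0$ term against the leftover $n=2$ term. The edge case is only declared analogous.

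\textbf{Your route.} You use only three ingredients:
\begin{itemize}
\item the eigenvalue equations \eqref{eiseneigenvalue} and \eqref{eiseneigenvalueedge};
\item the detailed-balance relations for $\mu$ and $\nu$;
\item the growth bound $O(nq^{n/2})$ coming from the characteristic roots $\sqrt q\, e^{\pm i\theta}$.
\end{itemize}
Your boundary term is right. Expanding directly gives $S_N=(q+1)\sum_{n\le N}\mu(v_n)f(v_n)+\mu(v_N)\bigl(f(v_{N+1})-f(v_N)\bigr)$. This is your $R_N$ once you use $q\mu(v_{N+1})=\mu(v_N)$. On edges the same holds with $2q$ in place of $q+1$.

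\textbf{Comparison.} The paper's telescoping is really this reversibility identity carried out on an explicit formula. Your version has three advantages:
\begin{itemize}
\item It treats vertices and edges uniformly.
\item It needs neither Efrat's closed form nor the proportionality constants in \eqref{eqfandEisen} and Corollary \ref{comparisonfyeins}. The constant in \eqref{eqfandEisen} is even of the form $0/0$ at $\theta\in\{0,\pi\}$.
\item It explains the lemma as orthogonality of the continuous spectrum to the constants. Any $\T$-eigenfunction with eigenvalue $\neq q+1$ and $\mu(v_n)f(v_n)\to0$ has vanishing (improper) integral. The same holds for $\U$-eigenfunctions with eigenvalue $\neq 2q$ and $\nu(e_n)f(e_n)\to 0$.
\end{itemize}
The paper's computation is shorter once Efrat's explicit formulas are taken for granted.
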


\begin{proof}
    We only prove the case over vertices. By \eqref{eqfandEisen}, it suffices to show that
    \[\int_{\Gamma \backslash\mathcal{V}(\Tcali)} \widehat{f}_{\mathbb{T},\theta}d\mu=0.\]
    An explicit description of the functions $\widehat{f}_{\mathbb{T},\theta}$ can be found in \cite{efrat}. More precisely, $\widehat{f}_{\mathbb{T},\theta}(v_0)=(q+1)i\sin(\theta)$, and for $n\geq 1$, $\widehat{f}_{\mathbb{T},\theta}(v_n)=q^{n/2}i\left(\sin((n+1)\theta)-q\sin((n-1)\theta) \right)$. Therefore,

    \begin{align*}
        \int_{\Gamma \backslash\mathcal{V}(\Tcali)} \widehat{f}_{\mathbb{T},\theta}d\mu&=i\sin(\theta)\mu(v_0)+\sum_{n\geq 1}q^{n/2}i\left(\sin((n+1)\theta)-q\sin((n-1)\theta) \right)\mu(v_n)\\
        &=\frac{q-1}{2q}(q+1)i\sin(\theta)+\sum_{n\geq 1}\frac{q^2-1}{2q^{n+1}}q^{n/2}i\sin\left((n+1)\theta \right)-\sum_{n\geq 1}\frac{q^2-1}{2q^{n+1}}q^{n/2}qi\sin((n-1)\theta)\\
        &=\frac{(q^2-1)i\sin(\theta)}{2q}-\frac{q^2-1}{2q^2}qi\sin(0)-\frac{q^2-1}{2q^3}q^2i\sin(\theta)+\\ &\ \ \ \ \ \ \ \ \ \ \ \ \ \ \ \ \ \ \ \ \ \ \ \ \ \ \ +\sum_{n\geq 1} \frac{q^2-1}{2q^{n+1}}q^{n/2}i\sin((n+1)\theta)-\sum_{n\geq 1}\frac{q^2-1}{2q^{n+3}}q^{n/2+2}i\sin((n+1)\theta)\\
        &=0.
    \end{align*}
\end{proof}

By Lemma \ref{integralfthetacero}, in order to prove Theorem \ref{thmA'} and \ref{thmC'}, it remains to show that
\begin{equation}\label{testEisen0}\dfrac{1}{h(D)}\sum_{[\afrak]\in \mathrm{Cl}(D)}E\left(\overline{\lambda}(z_{\afrak}),\dfrac{1}{2}+i\dfrac{\theta}{\log q} \right)\to 0 \text{ as } \deg D\to \infty. \end{equation}

To study this convergence, as in the number field setting, we express the average of Eisenstein series in terms of $L$-functions.

Let $g\in \GL_2(k_\infty)$ and write $g\cdot\i=z=x+y\i\in k_{\infty^2}\smallsetminus k_\infty$. Then $$g=\begin{pmatrix}
    y&x\\0&1
\end{pmatrix}k$$ for some $k\in \GL_2(k_\infty)$ with $k\i=\i$. After applying $\lambda$, we conclude that $k\in\mathbb{K}$. Write $\gamma=\begin{pmatrix}
    a&b\\c&d
\end{pmatrix}$, and then

\[|\gamma z|_i=\frac{|\det(\gamma)||z|_i}{|cz+d|^2}=\frac{|\det(\gamma)y|}{\max\{|cx+d|,|cy|\}^2}=\psi_{\v}(\gamma\begin{pmatrix}
    y&x\\0&1
\end{pmatrix})=\psi_{\v}(\gamma g),\] since $\psi_{\v}$ is right-invariant under $\mathbb{K}$.

Similarly, if $g\cdot\r=z=x+y\r\in k_{ram}\smallsetminus k_\infty$, $g=\begin{pmatrix}
    y&x\\0&1
\end{pmatrix}k'$ with $k'\in\mathbb{I}$ and \[|\gamma z|_i=\frac{|\det(\gamma)||z|_i}{|cz+d|^2}=\frac{q^{1/2}|\det(\gamma)y|}{\max\{q^{1/2}|cy|,|cx+d|\}^2}=q^{1/2}\psi_{\e}(\gamma\begin{pmatrix}
    y&x\\0&1
\end{pmatrix})=q^{1/2}\psi_{\e}(\gamma g),\] since $\psi_{\e}$ is right-invariant under $\mathbb{I}$.

Therefore, the Eisenstein series $E(z,s)$ on $\Gamma\backslash\Omega$, defined by \[E(z,s)=\sum_{\gamma\in\Gamma_\infty\backslash\Gamma}|\gamma z|_i^{s}=\sum_{(c,d)=A}\frac{|z|_i^s}{|cz+d|^{2s}}\] satisfies $E(z,s)=E_{\boldsymbol{v}}(\overline{\lambda}(z),s)$ for $z\in k_{\infty^2}\smallsetminus k_\infty$ and $E(z,s)=q^{s/2}E_{\boldsymbol{e}}(\overline{\lambda}(z),s)$ for $z\in k_{ram}\smallsetminus k_\infty$.

We conclude that \eqref{testEisen0} is equivalent to \begin{equation}\label{testEisen}\dfrac{1}{h(D)}\sum_{[\afrak]\in \mathrm{Cl}(D)}E\left(z_{\afrak},\dfrac{1}{2}+i\dfrac{\theta}{\log q} \right)\to 0 \text{ as } \deg D\to \infty. \end{equation}

Let $\chi_D$ be the quadratic Dirichlet character associated with $D$ i.e., $\chi_D(a)=\binom{D}{a}$ is the quadratic residue symbol. Its associated $L$-series is \[L(s,\chi_D)=\sum_{f\in A_+}\chi_D(f)|f|^{-s}.\]

For a character $\chi$ of the group $\mathrm{Cl}(D)$, recall that \[L_K(s,\chi)=\sum_{\mathfrak{a}\subseteq A[\sqrt{D}]}\chi(\mathfrak{a})N(\mathfrak{a})^{-s}.\]

If $\chi_0$ denotes the trivial class group character, we have the product formula \begin{equation}\label{trivialLseries}L_K(s,\chi_0)=\zeta_A(s)L(s,\chi_D),\end{equation} which follows by comparing Euler products.

\begin{lemma}\label{eisensteinsum} Let $s\neq 1+\frac{k\pi i}{\log q}$ for any $k\in\Z$. The following formula holds. \[\sum_{\mathfrak{a}\in \mathrm{Cl}(D)}E(z_\mathfrak{a},s)=\frac{\#\mathcal{O}_D^\times\zeta_A(s)}{(q-1)\zeta_A(2s)}|D|^{s/2}L(s,\chi_D).\]
\end{lemma}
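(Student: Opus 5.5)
This is the function field analogue of the classical Hecke identity: the sum over ideal classes of the Eisenstein series is a partial zeta function of the order, summed over all classes. The plan is as follows.

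\textbf{Step 1: each term as a sum over elements of an ideal.} Fix a class $[\afrak]$ and the representative $\Lambda_{z_\afrak}=A+Az_\afrak$, normalized so that $z=z_\afrak$ satisfies $az^2+bz+c=0$ with $\gcd(a,b,c)=1$ and $b^2-4ac=D$ (Lemma \ref{discriminant}). By Lemma \ref{imaginary1} (inert or ramified case), $|z|_i=|D|^{1/2}|a|^{-1}$, possibly up to the factor $q^{1/2}$ in the ramified case, which should cancel. Let $N$ denote the norm from $K_D$ to $k$. For $(c,d)\in A^2$, the element $\beta=cz+d\in\Lambda_z$ satisfies
\[|cz+d|^2=|N(cz+d)|=|N(\beta)|.\]
Moreover $N(\Lambda_z)=(a)^{-1}$ as a fractional ideal; this is the computation $\deg\afrak=-\deg a$ already done in Proposition \ref{altaverage}. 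Hence
\[\frac{|z|_i}{|cz+d|^2}=|D|^{1/2}\,\frac{N(\Lambda_z)}{|N(\beta)|}.\]

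\textbf{Step 2: remove the coprimality condition.} Summing over all $(c,d)\neq(0,0)$ instead of coprime pairs multiplies the sum by $(q-1)\zeta_A(2s)$, exactly as in \eqref{atzero}. This gives
\[(q-1)\zeta_A(2s)\,E(z,s)=|D|^{s/2}\sum_{0\neq\beta\in\Lambda_z}\left(\frac{|N(\beta)|}{N(\Lambda_z)}\right)^{-s}.\]
Each nonzero $\beta\in\Lambda_z$ determines an integral ideal $\beta\Lambda_z^{-1}$ in the class of $[\Lambda_z]^{-1}$, and two elements $\beta,\beta'$ give the same ideal if and only if they differ by a unit in $\mathcal{O}_D^\times$. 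Therefore the inner sum equals
\[\#\mathcal{O}_D^\times\sum_{\mathfrak{b}\in[\afrak]^{-1},\ \mathfrak{b}\subseteq\mathcal{O}_D}N(\mathfrak{b})^{-s}.\]
Here one must use that $\Lambda_z$ is a proper, hence invertible, $\mathcal{O}_D$-ideal. This holds because $D$ is square-free in this section, so $\mathcal{O}_D$ is maximal; for general $D$ one also needs the standard fact that proper ideals of quadratic orders are invertible.

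\textbf{Step 3: sum over classes.} Summing over $[\afrak]\in\mathrm{Cl}(D)$ yields
\[\#\mathcal{O}_D^\times\,|D|^{s/2}\,L_K(s,\chi_0),\]
and by \eqref{trivialLseries} this equals
\[\#\mathcal{O}_D^\times\,|D|^{s/2}\,\zeta_A(s)\,L(s,\chi_D).\]
Dividing by $(q-1)\zeta_A(2s)$ gives the stated formula. The condition $s\neq1+\frac{k\pi i}{\log q}$ avoids the poles of $\zeta_A(s)$ (and of $E$). The identity is first established for $\mathrm{Re}(s)>1$, where all sums converge absolutely, and then extended by the rationality of both sides in $q^{-s}$.

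\textbf{Main obstacle.} The delicate point is the normalization in Step 1: matching $|z|_i$, $N(\Lambda_z)$ and $|D|^{1/2}$ exactly, including the ramified case where $|\r|=q^{1/2}$ might introduce a stray factor. I would first verify this carefully on $z=\sqrt{D}$ (so $a=1$), and then transport it by $\Gamma$-equivariance using Lemma \ref{imaginary1}.
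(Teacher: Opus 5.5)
Your proposal is correct and is essentially the paper's argument. In both, $(q-1)\zeta_A(2s)E(z_\afrak,s)$ is identified with $\#\mathcal{O}_D^\times|D|^{s/2}$ times the partial zeta function of the class $[\afrak]^{-1}$ (via $|cz+d|^2=|N(cz+d)|$ and $N(\Lambda_{z})=|a|^{-1}$), and the proof concludes with \eqref{trivialLseries}. The paper routes the sum over classes through orthogonality of class group characters and Fourier inversion before specializing to $\chi_0$, which gives the twisted identity for every $\chi$; you sum directly, which suffices for $\chi_0$.

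The normalization you flag as the main obstacle is not an issue. In both the inert and the ramified case the minimum defining $|z|_i$ is attained at $w=-b/2a$, so $|z|_i=|\sqrt{D}/(2a)|=|D|^{1/2}|a|^{-1}$ exactly, and no stray factor $q^{1/2}$ arises.
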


\begin{proof}
We replicate the proof over number fields for which we refer to \cite[Chap. 22, Section 3]{iwanieckowalski}. Since $L_K(s,\chi)=L_K(s,\overline{\chi})$, orthogonality of characters yields \[\sum_{\chi\in \widehat{\mathrm{Cl}(\mathcal{O}_D)}}\chi(\mathfrak{a})L_K(s,\chi)=h(D)\sum_{\mathfrak{b}=\alpha\mathfrak{a}\subseteq\mathcal{O}_D}N(\mathfrak{b})^{-s}=\frac{h(D)}{\#\mathcal{O}_D^\times}|a|^{-s}\sum_{0\neq\alpha\in \mathfrak{a}^{-1}}N(\alpha)^s.\]

Since $\mathfrak{a}^{-1}=A+\overline{z_\afrak}A$ (where $\overline{z_\afrak}$ denotes the Galois conjugate of $z_\afrak$), we have \[\sum_{0\neq\alpha\in \mathfrak{a}^{-1}}N(\alpha)^s=\sum_{(0,0)\neq (c,d)\in A^2} |cz_\afrak+d|^{-2s}=(q-1)\zeta_A(2s)|z_\afrak|_i^{-s}E(z_\mathfrak{a},s).\]

Putting the above together, \[\sum_{\chi\in \widehat{\mathrm{Cl}(\mathcal{O}_D)}}\chi(\mathfrak{a})L_K(s,\chi)=\frac{(q-1)}{\#\mathcal{O}_D^\times}|D|^{-s/2}\zeta(2s)E(z_\mathfrak{a},s),\] which by Fourier inversion implies that \[\sum_{\mathfrak{a}\in \mathrm{Cl}(\mathcal{O}_D)}\chi(\mathfrak{a})E(z_\mathfrak{a},s)=\frac{\#\mathcal{O}_D^\times}{(q-1)\zeta_A(2s)}|D|^{s/2}L_K(s,\chi).\] The result follows by taking $\chi=\chi_0$ and \eqref{trivialLseries}.
\end{proof}






By Lemma \ref{eisensteinsum} we obtain that \[\left|\sum_{\mathfrak{a}\in \mathrm{Cl}(D)}E(z_\mathfrak{a},\frac{1}{2}+it)\right|\ll |D|^{1/4}\left|L(\frac{1}{2}+it,\chi_D)\right|.\]

Moreover, $|h(D)|\gg_\varepsilon|D|^{1/2-\varepsilon}$ (see \cite{papanikolasequidistribution} equation (8) on p.5) and $|L(\frac{1}{2}+it,\chi_D)|\ll_\varepsilon|D|^\varepsilon$ (see \cite[Theorem 5.1]{bucur2018traces} and its extension \cite[Theorem A1]{Diaconu}). We therefore conclude that 

\begin{equation}\label{eisencase}
\left|\frac{1}{h(D)}\sum_{\mathfrak{a}\in \mathrm{Cl}(D)}E(z_\mathfrak{a},\frac{1}{2}+it)\right|\ll |D|^{-1/4+\varepsilon}.\end{equation}

\section{General discriminant}\label{sectiongeneraldiscriminant}

In this section, we explain how to extend Theorems \ref{thmA} and \ref{thmB}, from square-free discriminant to general discriminants. The argument also gives the proof for Theorem \ref{thmB'}. We do so by following the strategy presented in \cite{clozelullmo}. We begin by introducing Hecke operators.

\subsection{Hecke operators}

Let $\nfrak$ be a non-zero ideal of $A$, generated by the monic polynomial $n\in A$. We define the Hecke correspondence on $\Gamma\backslash\Omega$, defined by

\[T_\nfrak  z=\sum_{\substack{a \text{ monic, } ad=n\\\deg b<\deg d}}\dfrac{az+b}{d}.\]

This correspondence induces a linear operator on complex-valued functions $f\colon \Gamma \setminus \Omega \to \C$ by \[T_\nfrak f(z)=\sum_{y\in T_\nfrak z}f(y).\] We prove that Eisenstein series are eigenfunctions for all Hecke operators. Before doing so, we first state some of their properties.

\begin{theorem}\label{propertiesheckeoperator}
The Hecke operators satisfy the following properties:
\begin{itemize}
    \item[(i)] If $\mfrak$ and $\nfrak$ are coprime ideals in $A$,  then $T_\mfrak T_\nfrak=T_\nfrak T_\mfrak$. 
    \item[(ii)] Let $\pfrak$ be a prime ideal of $A$. Then, for all $n\geq 1,$
    $$T_{\pfrak}T_{\pfrak^n}=T_{\pfrak^{n+1}}+|\pfrak|T_{\pfrak^{n-1}}.$$
\end{itemize}
\end{theorem}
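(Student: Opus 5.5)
The plan is to translate the Hecke correspondence into a statement about sublattices, where both properties become counting identities. For $z\in\Omega$ and monic $n$ generating $\nfrak$, the points $\frac{az+b}{d}$ with $ad=n$, $a$ monic, $\deg b<\deg d$ range over the $\Gamma$-classes of matrices of determinant $n$ up to scalars in $\F_q^\times$. This is a Hermite normal form statement: $\Gamma\backslash\{M\in M_2(A):\det M\in \F_q^\times n\}$ is represented by the upper triangular matrices $\begin{pmatrix}a&b\\0&d\end{pmatrix}$ with $a$ monic and $b$ reduced modulo $d$.

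Equivalently, the points correspond to sublattices $\Lambda'\subseteq \Lambda_z=A+zA$ with $\Lambda_z/\Lambda'\cong A/\nfrak$ as $A$-modules. The Hermite form matrix sends the basis $(1,z)$ to the basis $(d, az+b)$ of $\Lambda'$, and $\Lambda'$ is homothetic (via division by $d$) to $\Lambda_{(az+b)/d}$. Hence
\[T_\nfrak f(z)=\sum_{\Lambda'\subseteq\Lambda_z,\ [\Lambda_z:\Lambda']=\nfrak}f(\Lambda'),\]
where $f$ is regarded as a function on homothety classes of lattices and $[\Lambda:\Lambda']$ denotes the $A$-module index, that is, the product of the elementary divisor ideals. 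In this language the two properties are proved as follows.

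For (i), $T_\mfrak T_\nfrak f(\Lambda)$ sums $f$ over chains $\Lambda''\subseteq\Lambda'\subseteq\Lambda$ with indices $\nfrak$ and then $\mfrak$, read from the top. When $\mfrak+\nfrak=A$, a sublattice $\Lambda''$ of index $\mfrak\nfrak$ decomposes uniquely: by CRT, $\Lambda/\Lambda''\cong M_\mfrak\oplus M_\nfrak$ splits into its primary parts. Hence there is exactly one intermediate $\Lambda'$ of each type. So both $T_\mfrak T_\nfrak$ and $T_\nfrak T_\mfrak$ equal the sum over all index-$\mfrak\nfrak$ sublattices, which is $T_{\mfrak\nfrak}$, and they commute.

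For (ii), let $\pi$ be the monic generator of $\pfrak$. $T_\pfrak T_{\pfrak^n}f(\Lambda)$ sums over pairs $\Lambda''\subseteq\Lambda'\subseteq\Lambda$ with $[\Lambda:\Lambda']=\pfrak^n$ and $[\Lambda':\Lambda'']=\pfrak$. For each $\Lambda''$ of total index $\pfrak^{n+1}$, count the intermediate $\Lambda'$; this is the number of index-$\pfrak$ submodules of $\Lambda/\Lambda''$ containing $\Lambda''$. There are two cases.
\begin{itemize}
\item If $\Lambda''\not\subseteq\pi\Lambda$, then $\Lambda/\Lambda''$ is cyclic, isomorphic to $A/\pfrak^{n+1}$, so there is exactly one $\Lambda'$.
\item If $\Lambda''\subseteq\pi\Lambda$, then $\Lambda''=\pi\Lambda'''$ with $[\Lambda:\Lambda''']=\pfrak^{n-1}$. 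The $\Lambda'$ of index $\pfrak^n$ containing $\Lambda''$ correspond to the lines in $\Lambda/\pi\Lambda\cong(A/\pfrak)^2$ that lie in the image of $\Lambda'''$. There are $1+|\pfrak|$ of them when $n=1$, and $1$ when $n\ge2$ (if $\Lambda'''\not\subseteq\pi\Lambda$). I would handle this by an elementary-divisor count.
\end{itemize}
The total is the sum over all index-$\pfrak^{n+1}$ lattices with multiplicity $1$, plus $|\pfrak|$ extra copies of each $\pi\Lambda'''$. Since $f(\pi\Lambda''')=f(\Lambda''')$ by homothety invariance, this gives $T_{\pfrak^{n+1}}+|\pfrak|T_{\pfrak^{n-1}}$ (with $T_{\pfrak^0}=\mathrm{id}$).

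The main obstacle is the multiplicity count in (ii). Concretely, one must check that for every $\Lambda''$ with elementary divisors $(\pi^i,\pi^{n+1-i})$, the number of intermediate $\Lambda'$ is $1$ unless $i\ge1$, in which case it is exactly $|\pfrak|+1$. I would verify this by a case split on $i$, counting index-$\pfrak$ submodules of $A/\pfrak^i\oplus A/\pfrak^{n+1-i}$. There are $|\pfrak|+1$ such submodules (lines in the $\pfrak$-torsion quotient) when both factors are nontrivial, and $1$ when the module is cyclic. Since the subtracted term is indexed precisely by the $\Lambda''\subseteq\pi\Lambda$, this is exactly the required identity.
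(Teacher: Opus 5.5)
Your proof is correct in substance and takes a different route from the paper's. The paper never leaves the explicit representatives $\frac{az+b}{d}$:
\begin{itemize}
\item For (i), it composes the two families and uses that $a\beta+b\delta$ runs over the residues modulo $\delta d$ exactly once, because $a$ and $\delta$ are coprime. This gives $T_\mfrak T_\nfrak=T_{\mfrak\nfrak}$.
\item For (ii), it expands $T_\pfrak$ applied to $\sum_k\sum_b f\bigl(\frac{p^{n-k}z+b}{p^k}\bigr)$ and reindexes. The terms $f\bigl(\frac{z+Y+pb}{p^{n+1}}\bigr)$ supply the missing $k=n+1$ part of $T_{\pfrak^{n+1}}$, and the remaining terms hit every representative of $T_{\pfrak^{n-1}}$ exactly $|\pfrak|$ times.
\end{itemize}
You instead identify $T_\nfrak$ with a sum over sublattices of index $\nfrak$; the paper only adopts this viewpoint later, via Goss, in the proof of Lemma~\ref{Zhang}. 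This gives you well-definedness on $\Gamma\backslash\Omega$ for free and reduces (i) to uniqueness of the primary decomposition. It also explains the correction term structurally: it is exactly the contribution of the $\Lambda''\subseteq\pi\Lambda$, i.e.\ of the non-cyclic quotients. The price is the Hermite normal form dictionary, which the paper's direct computation avoids.

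Three statements in your write-up need fixing.
\begin{enumerate}
\item The sublattices in $T_\nfrak$ are those of index $\nfrak$, not those with $\Lambda_z/\Lambda'\cong A/\nfrak$. For example, the term $a=d=p$, $b=0$ of $T_{\pfrak^2}$ is $p\Lambda_z$, whose quotient is $(A/\pfrak)^2$. Such non-cyclic terms are precisely what produce $|\pfrak|T_{\pfrak^{n-1}}$, so the cyclic formulation would make (ii) fail.
\item Your second bullet claims there is only one intermediate lattice when $n\ge 2$. This is false. For $n=2$ one has $\Lambda/\pi\Lambda'''\cong A/\pfrak^2\oplus A/\pfrak$, which has $|\pfrak|+1$ submodules of index $\pfrak$. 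The multiplicity is $|\pfrak|+1$ for every $\Lambda''\subseteq\pi\Lambda$, which is what your ``total'' sentence and your last paragraph correctly use.
\item Your setup for (ii) has the ordering reversed. Since $T_\pfrak T_{\pfrak^n}f=T_\pfrak(T_{\pfrak^n}f)$, the chain has $[\Lambda:\Lambda']=\pfrak$ on top, not $\pfrak^n$. Your final count of index-$\pfrak$ submodules of $\Lambda/\Lambda''$ in fact uses this correct ordering. In this ordering the count needs no elementary divisors. The index-$\pfrak$ sublattices of $\Lambda$ are the preimages of the $|\pfrak|+1$ lines of $\Lambda/\pi\Lambda$. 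Such a $\Lambda'$ contains $\Lambda''$ iff its line contains the image of $\Lambda''$. That image is $0$ if $\Lambda''\subseteq\pi\Lambda$, and otherwise a line (it is proper because $\Lambda/\Lambda''$ is a nonzero $\pfrak$-primary module). The reversed ordering happens to give the same numbers, since $M[\pfrak]$ and $M/\pfrak M$ have equal dimension for $M=\Lambda/\Lambda''$, so nothing breaks.
\end{enumerate}
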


\begin{proof}\noindent 
\begin{itemize}
    \item[(i)] It suffices to show that  $T_{\mfrak \nfrak}=T_{\mfrak}T_{\nfrak}$, with $\mfrak$ and $\nfrak$ coprime ideals of $A.$ Let $m$ and $n$ be monic generators of $\mfrak$ and $\nfrak$, respectively.
\begin{align*}
    T_{\mfrak}T_{\nfrak}&=T_{\mfrak}\left(\sum_{\substack{a\text{ monic, } ad=n\\ \deg b<\deg d}} f\left( \frac{az+b}{d}\right)\right)\\
    &=\sum_{\substack{\alpha\text{ monic, } \alpha\delta=m\\ \deg \beta<\deg \delta}}\sum_{\substack{a\text{ monic, } ad=n\\ \deg b<\deg d}}f\left( \dfrac{a\alpha z+a\beta+b\delta}{\delta d}\right).
\end{align*}

Since $a$ and $\delta$ are coprime, and $\deg b<\deg n$, $\deg \beta < \deg m$, the sum runs over all elements of degree smaller than 
$\deg mn$. In this way, the last sum is equal to
$$\sum_{\substack{a\text{ monic, } ad=mn\\ \deg b<\deg d}} f\left( \frac{az+b}{d}\right).$$
We conclude that $T_{\mfrak \nfrak}=T_{\mfrak}T_{\nfrak}.$
    \item[(ii)] Let $p$ be a monic generator of $\pfrak$. Then 
    \begin{align*}
        T_\pfrak T_{\pfrak^n}f(z)&= T_\pfrak\left(\sum_{k=0}^{n}\sum_{ \deg b< \deg p^{k}} f\left( \dfrac{p^{n-k}z+b}{p^k}\right)\right)\\
        &=\sum_{k=0}^{n}\sum_{ \deg b< \deg p^{k}}\left( f\left( \dfrac{p^{n-k+1}z+b}{p^k}\right)+\sum_{\deg Y< \deg p}f\left(\dfrac{p^{n-k}\left( \frac{z+Y}{p}\right)+b}{p^k} \right)\right)\\
        &=\sum_{k=0}^n \sum_{ \deg b < \deg p^k}f\left(\dfrac{p^{n-k+1}z+b}{p^k} \right)+\sum_{ \deg b < \deg p^n}\sum_{\deg Y < \deg p}f\left(\frac{z+Y+p b}{p^{n+1}} \right)\\
    &     \ \ \ \ \ \ \ \ \ \      +  \sum_{k=0}^{n-1}\sum_{\deg b<\deg  p^k}\sum_{\deg Y< \deg p}f\left(\dfrac{p^{n-k}z+p^{n-k}Y+b}{p^k} \right)\\
    &=\sum_{k=0}^{n+1}\sum_{\deg b< \deg p^{k}}f\left(\frac{p^{n+1-k}z+b}{p^k} \right)+|\pfrak|\sum_{k=0}^{n-1}\sum_{\deg b< \deg p^{k}}f\left(\frac{p^{n-1-k}z+b}{p^k} \right)\\
    &=T_{\pfrak^{n+1}}+|\pfrak|T_{\pfrak^{n-1}}.
\end{align*}
\end{itemize}    
\end{proof}


To describe the eigenvalues of Eisenstein series, we need the following arithmetic function.

\begin{definition}\label{definitionfunctionsigma}
    Let $s\in \C$. The arithmetic function $\sigma_s\colon A_+\to \C$ is defined by
    \[\sigma_s(n)=\sum_{\substack{m \text{ monic}\\ m|n}}|m|^s.\]
\end{definition}

\begin{remark}\label{multiplicativesigma}
Similar to the classical $\sigma$-function, this is multiplicative; that is, $\sigma_s(m n)=\sigma_s(m)\sigma_s(n)$ if $m$ and $n$ are coprime. If $ \mathfrak{n}=nA$ with $n\in A_+$, we set $\sigma_s(\mathfrak{n}):=\sigma_s(n)$.
\end{remark}

\begin{lemma}\label{eigenfunctiontp}
    Let $\pfrak$ be a prime ideal of $A$. Then,
    \[T_\pfrak E(z,s)=|\pfrak|^s\left(1+|\pfrak|^{1-2s} \right)E(z,s).\]
\end{lemma}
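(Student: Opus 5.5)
The plan is to compute $T_\pfrak E(z,s)$ directly from the definition
\[E(z,s)=\sum_{\gamma\in\Gamma_\infty\backslash\Gamma}|\gamma z|_i^s,\]
rewriting it as a sum over lattice vectors. Use the normalized series
\[E^\star(z,s):=\sum_{(0,0)\neq(c,d)\in A^2}\frac{|z|_i^s}{|cz+d|^{2s}}=(q-1)\zeta_A(2s)E(z,s),\]
which holds by the same argument as \eqref{atzero}. Since the factor $(q-1)\zeta_A(2s)$ does not depend on $z$, it suffices to prove $T_\pfrak E^\star=|\pfrak|^s(1+|\pfrak|^{1-2s})E^\star$. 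Let $p$ be the monic generator of $\pfrak$. The images of $z$ under $T_\pfrak$ are the $|\pfrak|+1$ points $w=pz$ and $w=(z+b)/p$ with $\deg b<\deg p$.

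\textbf{Step 1: rescale each term to a sublattice.} For $w=(az+b)/d$ with $ad=p$, Lemma \ref{imaginary1} gives $|w|_i=|a||z|_i/|d|$ (here $\det=p$ and the denominator is $d$). The key observation is that
\[\sum_{(c',d')\neq 0}\frac{|w|_i^s}{|c'w+d'|^{2s}}=|w|_i^s|d|^{2s}\sum_{\ell\in L_w\smallsetminus\{0\}}|\ell|^{-2s},\]
where $L_w=A(az+b)+Ad$ is a sublattice of $\Lambda_z=A+Az$ of index $|\pfrak|$. Inserting $|w|_i^s|d|^{2s}=|p|^s|z|_i^s$, each term becomes
\[|\pfrak|^s|z|_i^s\sum_{0\neq\ell\in L_w}|\ell|^{-2s}.\]

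\textbf{Step 2: identify the sublattices and count.} As $w$ ranges over $T_\pfrak z$, the lattices $L_w$ range over all $|\pfrak|+1$ sublattices of $\Lambda_z$ of index $|\pfrak|$, namely those containing $p\Lambda_z$. A nonzero vector $\ell\in\Lambda_z$ lies in $p\Lambda_z$ exactly when it lies in all of them. If $\ell\notin p\Lambda_z$, its image in $\Lambda_z/p\Lambda_z\cong(A/\pfrak)^2$ is nonzero and spans a line, so $\ell$ lies in exactly one $L_w$. Hence
\[\sum_w\sum_{0\neq\ell\in L_w}|\ell|^{-2s}=\sum_{0\neq\ell\in\Lambda_z}|\ell|^{-2s}+|\pfrak|\sum_{0\neq\ell\in p\Lambda_z}|\ell|^{-2s}=\bigl(1+|\pfrak|^{1-2s}\bigr)\sum_{0\neq\ell\in\Lambda_z}|\ell|^{-2s},\]
using $|p\ell|^{-2s}=|\pfrak|^{-2s}|\ell|^{-2s}$. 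Multiplying by $|\pfrak|^s|z|_i^s$ gives the claimed eigenvalue for $E^\star$, and therefore for $E$.

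\textbf{Main obstacle.} The delicate points are justifying the formula for $|w|_i$ for a general $z\in\Omega$, and the manipulations themselves. Lemma \ref{imaginary1} is stated only for $z$ in quadratic extensions. For general $z$ I would verify $|\gamma z|_i=|\det\gamma||z|_i/|cz+d|^2$ directly from the definition of $|\cdot|_i$, or restrict to the points $z_\afrak$, which is all that Section \ref{sectiongeneraldiscriminant} needs. Absolute convergence, and hence the validity of the rearrangement in Step 2, holds for $\mathrm{Re}(s)>1$. The identity then extends to all $s$ because both sides are rational functions of $q^{-s}$.
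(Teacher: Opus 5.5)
Your proof is correct, and it reaches the eigenvalue by a different route from the paper.

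The paper works throughout with the sum over coprime pairs $(c,d)$. It computes $E(pz,s)$ and $\sum_{\deg b<\deg p}E((z+b)/p,s)$ separately, splitting each according to whether $p\mid c$ (equations \eqref{sump} and \eqref{sumoverb}). When the two are added, the $|\pfrak|^{-s}$-weighted sums over $p\nmid c$ cancel, and the $|\pfrak|^{s}$-weighted pieces recombine into $|\pfrak|^sE(z,s)$.

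You instead pass to the full lattice sum $E^\star=(q-1)\zeta_A(2s)E$. You identify the $|\pfrak|+1$ Hecke images with the sublattices of $\Lambda_z$ containing $p\Lambda_z$, that is, with the lines in $\Lambda_z/p\Lambda_z\cong(A/\pfrak)^2$. The factor $1+|\pfrak|^{1-2s}$ then comes out of a one-line incidence count. This is the function-field analogue of the classical lattice-function proof, as in Serre's treatment of $G_k$, that Eisenstein series are Hecke eigenforms.

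What each approach buys:
\begin{itemize}
\item Your version makes the shape of the eigenvalue transparent. It replaces a computation the paper only sketches as ``lengthy'' with a count anyone can check.
\item The price is the auxiliary factor $\zeta_A(2s)$. It forces you to argue first for $\mathrm{Re}(s)>1$ and then continue.
\item The paper avoids that factor by staying with $E$. However, its proof never discusses convergence, even though the lemma is applied on $\mathrm{Re}(s)=1/2$. So your attention to this point is well placed.
\end{itemize}

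Two small remarks.
\begin{itemize}
\item The obstacle you flag is not one. For $w=(az+b)/d$ the map $x\mapsto(dx-b)/a$ is a bijection of $k_\infty$. Hence $|w|_i=|a||z|_i/|d|$ holds for every $z\in\Omega$ directly from the definition of $|\cdot|_i$, with no need for Lemma \ref{imaginary1}.
\item In the continuation step, say ``meromorphic in $s$'' rather than ``rational in $q^{-s}$''. Already for ramified quadratic $z$ one has $E(z,s)=q^{s/2}E_{\e}(\overline{\lambda}(z),s)$, and for general $z\in\Omega$ fractional powers of $q^{-s}$ appear. The identity principle applies just the same.
\end{itemize}
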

\begin{proof} Let $p$ be a monic generator of $\pfrak$. The Hecke operator $T_\pfrak$ acts as follows:
    \[T_\pfrak E(z,s)=E(pz,s)+\sum_{\deg b<\deg p}E\left(\frac{z+b}{p},s\right).\]

    After a lengthy computation, one obtains 
\begin{equation}\label{sump}
E(pz,s)=|z|_i^s|p|^{-s}\sum_{\substack{(c,d)=A\\p\nmid c}}|cz+d|^{-2s}+|z|_i^s|p|^s\sum_{\substack{(c,d)=A\\p\mid c}}|cz+d|^{-2s},
\end{equation}

and
\begin{equation}\label{sumoverb}
    \sum_{\deg b<\deg p}E\left(\frac{z+b}{p},s\right)=|z|_i^s|p|^s\sum_{\substack{(c,d)=A\\ p\nmid c}}{\left|cz+d\right|^{-2s}}+|p|^{1-s}E(z,s)-|z|_i^s|p|^{-s}\sum_{\substack{(c,d)=A\\p\nmid c}}{\left|cz+d\right|^{2s}}.
\end{equation}
    
Adding \eqref{sump} and \eqref{sumoverb} one obtains \[T_\pfrak E(z,s)=(|\pfrak|^s+|\pfrak|^{1-s})E(z,s).\]

\end{proof}

\begin{theorem}\label{eigenfunctiontn}
    Let $\nfrak$ be a non-zero ideal of $A$ generated by a monic polynomial $n$. Then
    $$T_\nfrak E(z,s)=|\nfrak|^s \sigma_{1-2s}(\nfrak)E(z,s).$$
\end{theorem}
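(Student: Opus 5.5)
The plan is to reduce everything to prime powers, using multiplicativity on both sides, and then to induct on the exponent using the Hecke recursion of Theorem \ref{propertiesheckeoperator}(ii) together with the prime case, Lemma \ref{eigenfunctiontp}.

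\emph{Reduction to prime powers.} Write the monic generator as $n=p_1^{e_1}\cdots p_r^{e_r}$, with $\pfrak_j=p_jA$. The proof of Theorem \ref{propertiesheckeoperator}(i) actually shows $T_{\mfrak\nfrak}=T_\mfrak T_\nfrak$ for coprime $\mfrak,\nfrak$. Hence $T_\nfrak=T_{\pfrak_1^{e_1}}\cdots T_{\pfrak_r^{e_r}}$. On the right-hand side of the statement, $|\nfrak|^s$ is completely multiplicative, and $\sigma_{1-2s}$ is multiplicative by Remark \ref{multiplicativesigma}. So if each $E(z,s)$ is an eigenfunction of $T_{\pfrak^e}$ with eigenvalue $|\pfrak|^{es}\sigma_{1-2s}(p^e)$, applying the factors one after another gives the claimed eigenvalue for $T_\nfrak$. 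Each operator is linear, so scalars pull out at every stage.

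\emph{Prime powers by induction on $e$.} Set $\lambda_e:=|\pfrak|^{es}\sigma_{1-2s}(p^e)=|\pfrak|^{es}\sum_{j=0}^{e}|\pfrak|^{j(1-2s)}$.
\begin{itemize}
\item[(a)] For $e=0$, $T_{A}$ is the identity, since the only term has $a=d=1$, $b=0$. So $\lambda_0=1$.
\item[(b)] For $e=1$, Lemma \ref{eigenfunctiontp} gives $\lambda_1=|\pfrak|^s(1+|\pfrak|^{1-2s})$.
\item[(c)] For the inductive step, assume the claim for $e$ and $e-1$. Theorem \ref{propertiesheckeoperator}(ii) gives $T_{\pfrak^{e+1}}E=T_\pfrak T_{\pfrak^e}E-|\pfrak|T_{\pfrak^{e-1}}E=(\lambda_1\lambda_e-|\pfrak|\lambda_{e-1})E$.
\end{itemize}
It then remains to check the identity $\lambda_1\lambda_e-|\pfrak|\lambda_{e-1}=\lambda_{e+1}$. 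Write $x=|\pfrak|^s$ and $y=|\pfrak|^{1-s}$, so that $xy=|\pfrak|$. Then $\lambda_e=\sum_{j=0}^{e}x^{e-j}y^{j}$ is the complete homogeneous polynomial $h_e(x,y)$. The needed relation $(x+y)h_e-xy\,h_{e-1}=h_{e+1}$ is the standard recursion for $h_e$, which can be checked by writing $h_e=(x^{e+1}-y^{e+1})/(x-y)$, or directly by telescoping.

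\emph{Main obstacle.} The one point needing care is that the Hecke operators act on functions on $\Gamma\backslash\Omega$, while $E(z,s)$ is given by a series. To compose operators and move scalars freely, I would work with $s$ in the region of absolute convergence, $\mathrm{Re}(s)>1$. There all rearrangements are justified, and linearity is legitimate. The identity is then an equality of finite sums of values of $E$ at finitely many points, each of which is a rational function of $q^{-s}$. It therefore extends by analytic continuation to all $s$ away from the poles $s=1+k\pi i/\log q$, exactly as in the range where Lemma \ref{eisensteinsum} is used.

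Everything else is formal, since the prime case has already been established in Lemma \ref{eigenfunctiontp}.
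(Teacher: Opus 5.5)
Your proposal is correct and is essentially the paper's proof. You reduce to prime powers using $T_{\mfrak\nfrak}=T_{\mfrak}T_{\nfrak}$ for coprime ideals and the multiplicativity of $\sigma_{1-2s}$, then induct on the exponent via Theorem~\ref{propertiesheckeoperator}(ii) and Lemma~\ref{eigenfunctiontp}; your identity $(x+y)h_e-xy\,h_{e-1}=h_{e+1}$ is a cleaner packaging of the paper's algebra, and the explicit base case $T_{A}=\mathrm{id}$ makes the two-step induction fully honest. The analytic-continuation remark, which the paper omits, is fine but needs only meromorphy of $E(z,s)$ in $s$, not literal rationality in $q^{-s}$ (e.g.\ $E(z,s)=q^{s/2}E_{\e}(\overline{\lambda}(z),s)$ at ramified points).
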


\begin{proof}
    By Theorem \ref{propertiesheckeoperator}(i) and the multiplicativity of $\sigma_{1-2s}$, it suffices to treat the case $\nfrak=\pfrak^n$.
    We show it by induction.
    If $n=1$, then the result follows from Lemma \ref{eigenfunctiontp}.
    We suppose that the result holds for $T_{\pfrak^n}$. 
    Using recurrence formula of $T_{\pfrak^{n+1}}$ (Theorem \ref{propertiesheckeoperator}(ii)) and hypotheses of induction, we get
    \begin{align*}
        T_{\pfrak^{n+1}}E(z,s)&=T_\pfrak T_{\pfrak^{n}}E(z,s)-|\pfrak|T_{\pfrak^{n-1}}E(z,s)\\
        &=\left(\left(|\pfrak|^s+|\pfrak|^{1-s} \right)|\pfrak^n|^s\sum_{k=0}^n |\pfrak^k|^{1-2s}-|\pfrak||\pfrak^{n-1}|^s\sum_{k=0}^{n-1}|\pfrak^k|^{1-2s}\right)E(z,s)\\
        &=\left(|\pfrak^{n+1}|^s\sum_{k=0}^n |\pfrak^k|^{1-2s}+|\pfrak^{1-s+ns}|\sum_{k=0}^n|\pfrak^k|^{1-2s}-|\pfrak^{1+ns-s}|\sum_{k=0}^{n-1}|\pfrak^k|^{1-2s}\right)E(z,s)\\
        &=|\pfrak^{n+1}|^s\sum_{k=0}^{n+1}|\pfrak^k|^{1-2s}E(z,s)\\
        &=|\pfrak^{n+1}|^s\sigma_{1-2s}(\pfrak^{n+1})E(z,s).
    \end{align*}
\end{proof}

\subsection{The case of general discriminant}

Let $K/k$ be an imaginary quadratic extension contained in $\C_\infty$. Recall that every $A$-order $\mathcal{O}$ in $K$ is determined by its conductor, that is, the unique $A$-ideal $\mathfrak{f}$ for which $\mathcal{O}=A+\mathfrak{f}\mathcal{O}_K$. In what follows, we denote by $\mathcal{O}_{K,\mathfrak{f}}$ the order in $K$ of conductor $\mathfrak{f}$. Fix a square-free $D_K$ such that $\mathcal{O}_K=A[\sqrt{D_K}]$, and suppose that $\mathfrak{f}=fA$. Then $\mathcal{O}_{K,\mathfrak{f}}=A[f\sqrt{D_K}]$. We define the quantities $h(K,\mathfrak{f})$ and $\mathrm{CM}(K,\mathfrak{f})$ accordingly. Observe that  $\mathrm{CM}(K,\mathfrak{f})=\mathrm{CM}(D)$ with $D=D_Kf^2$.

We use the notation $\mathrm{Div}(K,\mathfrak{f})$ to denote the divisor on $ Y(\mathbb{C}_\infty)$ given by \[\sum_{\phi\in \mathrm{CM}(\mathcal{O}_{K,\mathfrak{f}})}\phi.\]

For an $A$-ideal $\mathfrak{n}$, let $R_K(\mathfrak{n})$ be the number of integral ideals in $\mathcal{O}_K$ with norm $\mathfrak{n}$. Set $w_{K,\mathfrak{f}}=[\mathcal{O}_{K,\mathfrak{f}}^\times\colon \mathbb{F}_q^\times]$. This quantity equals $q+1$ in the case when $K=\F_{q^2}(T)$ and $\mathfrak{f}=A$; otherwise, it assume the value $1$. 

\begin{lemma}\label{Zhang}
We have the following equality of divisors on $Y(\C_\infty)$:
\[T_\ffrak\left(\frac{1}{w_{K,A}}\mathrm{Div}({K},A)\right)=\sum_{\mathfrak{c}\mid\mathfrak{f}}R_K\left(\mathfrak{f}\mathfrak{c}^{-1}\right)\frac{1}{w_{K,\mathfrak{c}}}\mathrm{Div}({K,\mathfrak{c}}).\]
\end{lemma}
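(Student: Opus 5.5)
We translate the statement into lattices and count sublattices, in the same way as the classical argument in \cite{clozelullmo} (Zhang's formula). The correspondence $T_\ffrak$ sends the homothety class of a rank-$2$ $A$-lattice $\Lambda$ to the formal sum of the classes of the sublattices $\Lambda'\subseteq\Lambda$ with $\Lambda/\Lambda'\cong A/\ffrak$. Indeed, the matrices $\begin{pmatrix} a&b\\0&d\end{pmatrix}$ with $ad=f$ and $\deg b<\deg d$ form a set of representatives of $\Gamma\backslash\{M\in M_2(A):\det M\in f\F_q^\times\}$, and these correspond exactly to such sublattices of $\Lambda_z$. So, for $\phi_\Lambda\in\mathrm{CM}(K,A)$ with $\Lambda=\mathfrak{a}$ a fractional $\mathcal{O}_K$-ideal, $T_\ffrak\phi_\Lambda$ is the sum over all $A$-submodules $\Lambda'\subseteq\mathfrak{a}$ with $\mathfrak{a}/\Lambda'\cong A/\ffrak$. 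Each such $\Lambda'$ is a lattice in $K$, and its ring of multipliers is an order $\mathcal{O}_{K,\mathfrak{c}}$ with $\mathfrak{c}\mid\ffrak$. The multiplier ring contains $A+\ffrak\mathcal{O}_K$, since $\ffrak\mathfrak{a}\subseteq\Lambda'$. By Proposition \ref{CMDbijection}, the right-hand side therefore only involves the divisors $\mathrm{Div}(K,\mathfrak{c})$.

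The core step is a local count. Fix $\mathfrak{c}\mid\ffrak$ and a proper $\mathcal{O}_{K,\mathfrak{c}}$-ideal $\mathfrak{b}$. I would count the pairs $(\mathfrak{a},\Lambda')$ with $\Lambda'$ homothetic to $\mathfrak{b}$, $\Lambda'\subseteq\mathfrak{a}$ of index $\ffrak$, and $\mathfrak{a}$ running over representatives of $\mathrm{Cl}(\mathcal{O}_K)$.

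First, any such $\mathfrak{a}$ contains $\Lambda'\mathcal{O}_K$. Write $\mathfrak{b}\mathcal{O}_K$ for this $\mathcal{O}_K$-ideal. The index of $\mathfrak{b}$ in $\mathfrak{b}\mathcal{O}_K$ is exactly $\mathfrak{c}$, which is the standard index computation for proper ideals, checked locally at each prime dividing $\mathfrak{c}$. Hence $\mathfrak{a}\supseteq\mathfrak{b}\mathcal{O}_K$ with $[\mathfrak{a}:\mathfrak{b}\mathcal{O}_K]=\ffrak\mathfrak{c}^{-1}$. Equivalently, $\mathfrak{b}\mathcal{O}_K=\mathfrak{a}\mathfrak{d}$ for an integral $\mathcal{O}_K$-ideal $\mathfrak{d}$ of norm $\ffrak\mathfrak{c}^{-1}$. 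There are $R_K(\ffrak\mathfrak{c}^{-1})$ choices of $\mathfrak{d}$, and each determines $\mathfrak{a}$.

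Next, the homothety class of $\Lambda'$ is fixed, but within the class the sublattices of $\mathfrak{a}$ equal to $\alpha\mathfrak{b}$ must be counted modulo the units that preserve $\mathfrak{a}$. This is where the weights enter. Multiplication by $\mathcal{O}_K^\times/\mathcal{O}_{K,\mathfrak{c}}^\times$ gives distinct sublattices in the same $\Gamma$-orbit. The resulting multiplicity is $[\mathcal{O}_K^\times:\mathcal{O}_{K,\mathfrak{c}}^\times]=w_{K,A}/w_{K,\mathfrak{c}}$. Dividing by $w_{K,A}$ then yields the coefficient $R_K(\ffrak\mathfrak{c}^{-1})/w_{K,\mathfrak{c}}$.

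I expect the main obstacle to be the bookkeeping that makes this count exact. Three things must be checked.
\begin{enumerate}
\item Every $\Lambda'$ produced lies in some $\mathrm{CM}(K,\mathfrak{c})$ with $\mathfrak{c}\mid\ffrak$.
\item The map $(\mathfrak{a},\Lambda')\mapsto(\mathfrak{b},\mathfrak{d})$ is a bijection up to the unit action.
\item The unit stabilizers are accounted for correctly.
\end{enumerate}
The third point matters only for $K=\F_{q^2}(T)$, where $w_{K,A}=q+1$. I would handle it by reducing everything to $\pfrak$-adic completions. There $\mathcal{O}_{K,\pfrak}$ is a DVR, or a product of two, or ramified, and sublattices of index $\pfrak^m$ are explicitly parametrized. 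I would also use multiplicativity (Theorem \ref{propertiesheckeoperator}(i)) to reduce to prime powers $\ffrak=\pfrak^m$, checking that both sides are multiplicative in $\ffrak$.
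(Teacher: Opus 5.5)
Your count is the paper's count in ideal-theoretic rather than adelic language. Your $\mathfrak{d}=\mathfrak{a}^{-1}\Lambda'\mathcal{O}_K$ plays the role of the paper's $\gamma=y^{-1}\lambda x\in\widehat{\mathcal{O}}_K/\widehat{\mathcal{O}}_K^\times$ of norm $\mathfrak{f}\mathfrak{c}^{-1}$. Both arguments get fibres of size $[\mathcal{O}_K^\times:\mathcal{O}_{K,\mathfrak{c}}^\times]=w_{K,A}/w_{K,\mathfrak{c}}$; you have the index with $\mathfrak{c}$, which is what the final equality needs, where the paper's text writes $\mathcal{O}_{K,\mathfrak{f}}$.

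The three bookkeeping items you list are already settled by the global count:
\begin{enumerate}
\item Given $\mathfrak{d}$, the representative $\mathfrak{a}$ is forced to be the one in the class of $\mathfrak{b}\mathcal{O}_K\mathfrak{d}^{-1}$.
\item The scalar $\alpha$ with $\alpha\mathfrak{b}\mathcal{O}_K=\mathfrak{a}\mathfrak{d}$ is unique up to $\mathcal{O}_K^\times$.
\item $u\alpha\mathfrak{b}=\alpha\mathfrak{b}$ exactly when $u\in\mathcal{O}_{K,\mathfrak{c}}^\times$.
\end{enumerate}
So neither completions nor a reduction to prime powers is needed. The multiplicativity route would in fact require an analogous formula for $T_{\mathfrak{m}}$ acting on $\mathrm{Div}(K,\mathfrak{c})$ with $\mathfrak{c}\neq A$, which you have not proved.

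One statement in your first paragraph must be corrected: $T_\mathfrak{f}$ is not the sum over sublattices with $\Lambda/\Lambda'\cong A/\mathfrak{f}$. The matrices $\begin{pmatrix}a&b\\0&d\end{pmatrix}$ with $ad=f$ include imprimitive ones. For $\mathfrak{f}=\mathfrak{p}^2$, the choice $a=d=p$, $b=0$ gives $p\Lambda$, whose quotient is $(A/\mathfrak{p})^2$. So $T_\mathfrak{f}$ runs over all sublattices of index $\mathfrak{f}$, which is how the paper reads it via Goss.

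Your core count uses ``index $\mathfrak{f}$'', and it has to. Under the cyclic reading, the term $\mathfrak{c}=A$, $\mathfrak{d}=p\mathcal{O}_K$ (that is, $\Lambda'=p\mathfrak{a}$) would be lost. The coefficient of $\mathrm{Div}(K,A)$ for $\mathfrak{f}=\mathfrak{p}^2$ would then be $R_K(\mathfrak{p}^2)-1$ instead of $R_K(\mathfrak{p}^2)$. With that description fixed, your argument proves the lemma.
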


This is the function field analogue of Zhang's Proposition 4.2.1 \cite{Zhang}. The proof follows the same lines, and to reproduce it, we recall the idelic description of the class group. For a prime $\mathfrak{p}$ of $A$, $A_\mathfrak{p}$ denotes the completion of $A$ at $\mathfrak{p}$. The ring of finite adeles $\prod_\mathfrak{p}A_\mathfrak{p}$ is denoted by $\widehat{A}$ and $\widehat{k}$ (resp. $\widehat{K}$) denotes the ring of finite adeles $k\otimes\widehat{A}$ (resp. $K\otimes\widehat{A}$). For every order $\mathcal{O}$ in $K$, the map sending $x\in \widehat{K}^\times$ to $K\cap x\widehat{\mathcal{O}}$ induces an isomorphism \[K^\times\backslash\widehat{K}/\widehat{\mathcal{O}}^\times\cong \mathrm{Cl}(\mathcal{O}).\]

\begin{proof}[Proof of Lemma \ref{Zhang}] By Lemma 3.5 in \cite{Goss80}, the sum defining $T_\mathfrak{f}z$ can be seen as a sum over the homothety classes of lattices admitting a representative $L\subseteq\Lambda_z$ of index $\mathfrak{f}$. 
Let $\mathfrak{a}$ be a proper $\mathcal{O}_{K,\mathfrak{c}}$-ideal and $\mathfrak{b}$ a $\mathcal{O}_K$-ideal, and suppose that their images in the class group correspond to $x$ and $y$ in $\widehat{K}^\times$ respectively. Then, for $\lambda\in K^\times$, $\lambda \mathfrak{a}\subseteq\mathfrak{b}$ with index $\mathfrak{f}$ if and only if $\lambda x\widehat{\mathcal{O}}_{K,\mathfrak{c}}$ is a sublattice of $ y\widehat{\mathcal{O}}_{K}$ of index $\mathfrak{f}$. Equivalently, $y^{-1}\lambda x\in \widehat{\mathcal{O}}_{K}$ and the norm of $y^{-1}\lambda x$ is $\mathfrak{f}/\mathfrak{c}$.

Let $S$ be a set of representatives for $K^\times\backslash\widehat{K}/\widehat{\mathcal{O}}_K^\times$ and consider the map $S\times K^\times/\mathcal{O}_{K,\mathfrak{f}}^\times\to \widehat{K}^\times/\widehat{\mathcal{O}}_{K}^\times$ induced by $(y,\lambda)\mapsto y^{-1}\lambda x$. Suppose that $y_1^{-1}\lambda_1 x=y^{-1}_2\lambda_2 xu$ with $u\in\widehat{\mathcal{O}}_K$. Then $y_2=\lambda_1^{-1}\lambda_2y_1u$, from which it follows that $y_1=y_2$ by the definition of $S$ and so $\lambda_1^{-1}\lambda_2\in\mathcal{O}_K^\times=K^\times\cap\widehat{\mathcal{O}}_K^\times$. We conclude that the map is $[\mathcal{O}_K^\times\colon \mathcal{O}_{K,\mathfrak{f}}^\times]$-to-one. Therefore, making $\gamma:=y^{-1}\lambda x$ we get that the multiplicity of $\Phi_\mathfrak{a}$ in $T_\mathfrak{f}\left(\mathrm{Div}(K,A)\right)$ is \[\frac{1}{w_{K,A}}[\mathcal{O}_K^\times\colon \mathcal{O}_{K,\mathfrak{f}}^\times]\#\{\gamma\in \widehat{\mathcal{O}}_K/\widehat{\mathcal{O}}_K^\times\mid N(\gamma)=\mathfrak{f}\mathfrak{c}^{-1}\}=\frac{1}{w_{K,\mathfrak{c}}}R_K(\mathfrak{f}\mathfrak{c}^{-1}).\]
\end{proof}

Let $\boldsymbol{1}$ the constant arithmetic function equal to $1$. The identity \eqref{trivialLseries} implies that $R_K=\chi_{D_K}*\boldsymbol{1}$ where $*$ indicates Dirichlet convolution. Letting $R_K^{-1}$ denote the inverse of $R_K$ under $*$, we conclude that \begin{equation}\label{boundRK}
|R_K^{-1}(\mathfrak{n})|\ll_\varepsilon|\mathfrak{n}|^\varepsilon.
\end{equation} 

Assume $\deg(D)\geq1$ so that $w_{K,\mathfrak{f}}=1$. By Lemma \ref{Zhang} and M{\"o}bius inversion formula, we have

\begin{align*}
\sum_{\mathfrak{a}\in \mathrm{Cl}(\mathcal{O}_{K,f})}E(z_\mathfrak{a},s)&=\sum_{\mathfrak{a}\in\mathrm{Cl}(\mathcal{O}_K)}\sum_{\mathfrak{c}\mid \mathfrak{f}}R^{-1}_K(\mathfrak{f}\mathfrak{c}^{-1})(T_{\mathfrak{c}}E(z_\mathfrak{a},s))\\&=\sum_{\mathfrak{c}\mid \mathfrak{f}}R^{-1}_K(\mathfrak{f}\mathfrak{c}^{-1})|\mathfrak{c}|^s\sigma_{1-2s}(\mathfrak{c})\sum_{\mathfrak{a}\in \mathrm{Cl}(\mathcal{O}_{K})}E(z_\mathfrak{a},s).
\end{align*}

On the other hand, the formula (Proposition 17.9 in \cite{rosen}) \[h(K,\mathfrak{f})=h(K)\frac{|\mathfrak{f}|}{[\mathcal{O}_K^\times :\mathcal{O}_{K,\mathfrak{f}}^\times]}\prod_{\mathfrak{p}\mid\mathfrak{f}}\left(1-\frac{\chi_K(\mathfrak{p})}{|\mathfrak{p}|}\right)\] implies that  $h(K,\mathfrak{f})\gg_\varepsilon h(K)|\mathfrak{f}|^{1-\varepsilon}$. From the square-free case \eqref{eisencase}, together with \eqref{boundRK} and $|\sigma_s(\mathfrak{n})|\ll_\varepsilon|\mathfrak{n}|^{\varepsilon+\mathrm{Re}(s)}$, we conclude, with $s=1/2+\frac{\theta}{\log q}i$, that

\[\left|\frac{1}{h(K,\mathfrak{f})}\sum_{\mathfrak{a}\in \mathrm{Cl}(\mathcal{O}_{K,\mathfrak{f}})}E(z_\mathfrak{a},s)\right|\ll_\varepsilon\frac{|D_K|^{-1/4+\varepsilon}}{|\mathfrak{f}|^{1/2-\varepsilon}}=|D|^{-1/4+\varepsilon}.\] This proves Theorem \ref{thmB'} and complete the proof of Theorems \ref{thmA'} and \ref{thmC'}.

\section*{Acknowledgements} 
M.A. was partially supported by ANID Fondecyt  postdoctoral grant 3261344 from Chile. \\ We thank Fabien Pazuki for useful comments and  suggestions.

\printbibliography{}

@article {AABP24,
    AUTHOR = {Armana, C\'ecile and Angl\`es, Bruno and Bosser, Vincent and Pazuki, Fabien},
    shorthand = {AABP24},
     TITLE = {Drinfeld singular moduli, hyperbolas, units},
   JOURNAL = {Advances in Math.},
      YEAR = {to appear},
    EPRINT = {arXiv:2404.01075},
ARCHIVEPREFIX = {arXiv},
    NUMBER = {},
     PAGES = {},
      ISSN = {},
       DOI = {}
}

@article {Cornut02,
    AUTHOR = {Cornut, Christophe},
     TITLE = {Mazur's conjecture on higher {H}eegner points},
   JOURNAL = {Invent. Math.},
  FJOURNAL = {Inventiones Mathematicae},
    VOLUME = {148},
      YEAR = {2002},
    NUMBER = {3},
     PAGES = {495--523},
      ISSN = {0020-9910,1432-1297},
   MRCLASS = {11G40 (11F33 11F67 11G18 11R23)},
  MRNUMBER = {1908058},
MRREVIEWER = {Jan\ Nekov\'a\v r},
       DOI = {10.1007/s002220100199},
       URL = {https://doi.org/10.1007/s002220100199},
}

@article {Vatsal02,
    AUTHOR = {Vatsal, V.},
     TITLE = {Uniform distribution of {H}eegner points},
   JOURNAL = {Invent. Math.},
  FJOURNAL = {Inventiones Mathematicae},
    VOLUME = {148},
      YEAR = {2002},
    NUMBER = {1},
     PAGES = {1--46},
      ISSN = {0020-9910,1432-1297},
   MRCLASS = {11G40 (11G05 11R23)},
  MRNUMBER = {1892842},
MRREVIEWER = {Massimo\ Bertolini},
       DOI = {10.1007/s002220100183},
       URL = {https://doi.org/10.1007/s002220100183},
}

@article {JetchevKane,
    AUTHOR = {Jetchev, Dimitar and Kane, Ben},
     TITLE = {Equidistribution of {H}eegner points and ternary quadratic
              forms},
   JOURNAL = {Math. Ann.},
  FJOURNAL = {Mathematische Annalen},
    VOLUME = {350},
      YEAR = {2011},
    NUMBER = {3},
     PAGES = {501--532},
      ISSN = {0025-5831,1432-1807},
   MRCLASS = {11G18 (11E20 11E45 11G05)},
  MRNUMBER = {2805634},
MRREVIEWER = {\'Alvaro\ Lozano-Robledo},
       DOI = {10.1007/s00208-010-0568-5},
       URL = {https://doi-org.kb-ku.idm.oclc.org/10.1007/s00208-010-0568-5},
}

@article {CornutVatsal,
    AUTHOR = {Cornut, C. and Vatsal, V.},
     TITLE = {C{M} points and quaternion algebras},
   JOURNAL = {Doc. Math.},
  FJOURNAL = {Documenta Mathematica},
    VOLUME = {10},
      YEAR = {2005},
     PAGES = {263--309},
      ISSN = {1431-0635,1431-0643},
   MRCLASS = {11G15 (11G18)},
  MRNUMBER = {2148077},
MRREVIEWER = {Benjamin\ V.\ Howard},
}

@article {Michel04,
    AUTHOR = {Michel, P.},
     TITLE = {The subconvexity problem for {R}ankin-{S}elberg
              {$L$}-functions and equidistribution of {H}eegner points},
   JOURNAL = {Ann. of Math. (2)},
  FJOURNAL = {Annals of Mathematics. Second Series},
    VOLUME = {160},
      YEAR = {2004},
    NUMBER = {1},
     PAGES = {185--236},
      ISSN = {0003-486X,1939-8980},
   MRCLASS = {11F66 (11G18)},
  MRNUMBER = {2119720},
MRREVIEWER = {Gergely\ Harcos},
       DOI = {10.4007/annals.2004.160.185},
       URL = {https://doi-org.kb-ku.idm.oclc.org/10.4007/annals.2004.160.185},
}

@article {Diaconu,
    AUTHOR = {Diaconu, Adrian},
     TITLE = {On the third moment of {$L(\frac{1}{2},\chi_d)$} {I}: {T}he
              rational function field case},
   JOURNAL = {J. Number Theory},
  FJOURNAL = {Journal of Number Theory},
    VOLUME = {198},
      YEAR = {2019},
     PAGES = {1--42},
      ISSN = {0022-314X,1096-1658},
   MRCLASS = {11M06 (11R58)},
  MRNUMBER = {3912928},
MRREVIEWER = {Sandro\ Bettin},
       DOI = {10.1016/j.jnt.2018.09.023},
       URL = {https://doi.org/10.1016/j.jnt.2018.09.023},
}

@incollection {reversat,
    AUTHOR = {Reversat, Marc},
     TITLE = {Lecture on rigid geometry},
 BOOKTITLE = {The arithmetic of function fields ({C}olumbus, {OH}, 1991)},
    SERIES = {Ohio State Univ. Math. Res. Inst. Publ.},
    VOLUME = {2},
     PAGES = {143--151},
 PUBLISHER = {de Gruyter, Berlin},
      YEAR = {1992},
      ISBN = {3-11-013171-4},
   MRCLASS = {11G09 (32P05)},
  MRNUMBER = {1196517},
MRREVIEWER = {David\ Goss},
}

@incollection {gekeler99,
    AUTHOR = {Gekeler, Ernst-Ulrich},
     TITLE = {Some new results on modular forms for {${\rm GL}(2,{\bf
              F}_q[T])$}},
 BOOKTITLE = {Recent progress in algebra, 1997},
    SERIES = {Contemp. Math.},
    VOLUME = {224},
     PAGES = {111--141},
 PUBLISHER = {Amer. Math. Soc., Prov., RI},
      YEAR = {1999},
      ISBN = {0-8218-0972-5},
   MRCLASS = {11G09 (11F11 11R58)},
  MRNUMBER = {1653065},
MRREVIEWER = {Yoshinori\ Hamahata},
       DOI = {10.1090/conm/224/03189},
       URL = {https://doi.org/10.1090/conm/224/03189},
}

@article {gekelerreversat,
    AUTHOR = {Gekeler, E.-U. and Reversat, M.},
     TITLE = {Jacobians of {D}rinfeld modular curves},
   JOURNAL = {J. Reine Angew. Math.},
  FJOURNAL = {Journal f\"ur die Reine und Angewandte Mathematik. [Crelle's
              Journal]},
    VOLUME = {476},
      YEAR = {1996},
     PAGES = {27--93},
      ISSN = {0075-4102,1435-5345},
   MRCLASS = {11G09 (11R39)},
  MRNUMBER = {1401696},
MRREVIEWER = {David\ Goss},
       DOI = {10.1515/crll.1996.476.27},
       URL = {https://doi.org/10.1515/crll.1996.476.27},
}

@incollection {clozelullmo,
    AUTHOR = {Clozel, Laurent and Ullmo, Emmanuel},
     TITLE = {\'Equidistribution des points de {H}ecke},
 BOOKTITLE = {Contr. to automorphic forms, geometry, and number
              theory},
     PAGES = {193--254},
 PUBLISHER = {J. Hopkins Univ. Press, Baltimore, MD},
      YEAR = {2004},
      ISBN = {0-8018-7860-8},
   MRCLASS = {11F60 (11F70)},
  MRNUMBER = {2058609},
MRREVIEWER = {A.\ Raghuram},
}

@article {Goss80,
    AUTHOR = {Goss, David},
     TITLE = {{$\pi $}-adic {E}isenstein series for function fields},
   JOURNAL = {Comp. Math.},
  FJOURNAL = {Compositio Mathematica},
    VOLUME = {41},
      YEAR = {1980},
    NUMBER = {1},
     PAGES = {3--38},
      ISSN = {0010-437X,1570-5846},
   MRCLASS = {10D45 (10D99)},
  MRNUMBER = {578049},
MRREVIEWER = {K.\ Shiratani},
       URL = {http://www.numdam.org/item?id=CM_1980__41_1_3_0},
}

@article {Zhang,
    AUTHOR = {Zhang, Shouwu},
     TITLE = {Heights of {H}eegner points on {S}himura curves},
   JOURNAL = {Ann. of Math. (2)},
  FJOURNAL = {Annals of Mathematics. Second Series},
    VOLUME = {153},
      YEAR = {2001},
    NUMBER = {1},
     PAGES = {27--147},
      ISSN = {0003-486X,1939-8980},
   MRCLASS = {11G18 (11F11 11G05 11G40 11G50)},
  MRNUMBER = {1826411},
MRREVIEWER = {Henri\ Darmon},
       DOI = {10.2307/2661372},
       URL = {https://doi.org/10.2307/2661372},
}

@misc{papanikolasequidistribution,
      title={Equidistribution of {G}ross points over rational function fields}, 
      author={Ahmad El-Guindy and Riad Masri and Matthew Papanikolas and Guchao Zeng},
      shorthand={EGM\textsuperscript{+}20},
      year={2020},
      eprint={1905.07001},
      archivePrefix={arXiv},
      primaryClass={math.NT},
      url={https://arxiv.org/abs/1905.07001}, 
}

@book {rosen,
    AUTHOR = {Rosen, Michael},
     TITLE = {Number theory in function fields},
    SERIES = {Graduate Texts in Mathematics},
    VOLUME = {210},
 PUBLISHER = {Springer-Verlag, New York},
      YEAR = {2002},
     PAGES = {xii+358},
      ISBN = {0-387-95335-3},
   MRCLASS = {11R58 (11R60 11T55)},
  MRNUMBER = {1876657},
MRREVIEWER = {Ernst-Ulrich\ Gekeler},
       DOI = {10.1007/978-1-4757-6046-0},
       URL = {https://doi.org/10.1007/978-1-4757-6046-0},
}

@book {Bourbaki04,
    AUTHOR = {Bourbaki, Nicolas},
     TITLE = {Integration. {II}. {C}hapters 7--9},
    SERIES = {Elements of Mathematics (Berlin)},
      NOTE = {Translated from the 1963 and 1969 French originals by Sterling
              K. Berberian},
 PUBLISHER = {Springer-Verlag, Berlin},
      YEAR = {2004},
     PAGES = {viii+326},
      ISBN = {3-540-20585-3},
   MRCLASS = {28-01 (28C10 28C15)},
  MRNUMBER = {2098271},
}

@article{herreroMenaresRivera1,
  author={Herrero, Sebasti{\'a}n and Menares, Ricardo and Rivera-Letelier, Juan},
  title={p-adic distribution of CM points and Hecke orbits {I}: Convergence towards the Gauss point},
  journal={Algebra \& Number Theory},
  volume={14},
  number={5},
  pages={1239--1290},
  year={2020},
  publisher={Mathematical Sciences Publishers}
}

@misc{herreroMenaresRivera2,
    author={Herrero, Sebasti{\'a}n and Menares, Ricardo and Rivera-Letelier, Juan},
      title={p-Adic distribution of CM points and Hecke orbits. II: Linnik equidistribution on the supersingular locus},
      year={2021},
      eprint={2102.04865},
      archivePrefix={arXiv},
      primaryClass={math.NT},
      url={https://arxiv.org/abs/2102.04865}, 
}

@article{drinfeld74,
  title={Elliptic modules},
  author={Drinfel'd, Vladimir G},
  journal={Mathematics of the USSR-Sbornik},
  volume={23},
  number={4},
  pages={561},
  year={1974},
  publisher={IOP Publishing}
}

@book{papikianlibro,
  title={Drinfeld modules},
  author={Papikian, Mihran},
  volume={296},
  year={2023},
  publisher={Springer Nature}
}

@book {serretrees,
    AUTHOR = {Serre, Jean-Pierre},
     TITLE = {Trees},
      NOTE = {Translated from the French by John Stillwell},
 PUBLISHER = {Springer-Verlag, Berlin-New York},
      YEAR = {1980},
     PAGES = {ix+142},
      ISBN = {3-540-10103-9},
   MRCLASS = {20H10 (05C05 22E50)},
  MRNUMBER = {607504},
}

@article {dukeHiperbolic,
    AUTHOR = {Duke, W.},
     TITLE = {Hyperbolic distribution problems and half-integral weight
              {M}aass forms},
   JOURNAL = {Invent. Math.},
  FJOURNAL = {Inventiones Mathematicae},
    VOLUME = {92},
      YEAR = {1988},
    NUMBER = {1},
     PAGES = {73--90},
      ISSN = {0020-9910,1432-1297},
   MRCLASS = {11F11 (11E32 11F30 11F37)},
  MRNUMBER = {931205},
MRREVIEWER = {Mark\ Sheingorn},
       DOI = {10.1007/BF01393993},
       URL = {https://doi.org/10.1007/BF01393993},
}

@article{nagoshi,
  title={Selberg zeta functions over function fields},
  author={Nagoshi, Hirofumi},
  journal={Journal of Number Theory},
  volume={90},
  number={2},
  pages={207--238},
  year={2001},
  publisher={Elsevier}
}

@book{efrat,
  title={Automorphic spectra on the tree of {PGL}2},
  author={Efrat, Isaac},
  year={1986},
  publisher={Mathematical Sciences Research Institute}
}

@book{iwanieckowalski,
  title={Analytic number theory},
  author={Iwaniec, Henryk and Kowalski, Emmanuel},
  volume={53},
  year={2021},
  publisher={American Math. Soc.}
}

@inproceedings{bucur2018traces,
  title={Traces, high powers and one level density for families of curves over finite fields},
  author={Bucur, Alina and Costa, Edgar and David, Chantal and Guerreiro, Joao and Lowry--Duda, David},
  shorthand = {BCD\textsuperscript{+}18},
  booktitle={Mathematical Proceedings of the Cambridge Philosophical Society},
  volume={165},
  number={2},
  pages={225--248},
  year={2018},
  organization={Cambridge University Press}
}

\end{document}